\newtheorem{theorem}{Theorem}
\newtheorem{corollary}[theorem]{Corollary}
\newtheorem{definition}[theorem]{Definition}
\newtheorem{example}[theorem]{Example}
\newtheorem*{example*}{Example}
\newtheorem{lemma}[theorem]{Lemma}
\newtheorem{proposition}[theorem]{Proposition}
\newtheorem{remark}[theorem]{Remark}
\newtheorem{remarks}[theorem]{Remarks}
\numberwithin{theorem}{section}
\numberwithin{equation}{section}
\begin{document}

\title{Ricci almost solitons}
\author[S.Pigola]{Stefano Pigola}
\address{Dipartimento di Fisica e Matematica\\
Universit\`a dell'Insubria - Como\\
via Valleggio 11\\
I-22100 Como, ITALY}
\email{stefano.pigola@uninsubria.it}

\author[M. Rigoli]{Marco Rigoli}
\address{Dipartimento di Matematica\\
Universit\`a degli Studi di Milano\\
via Saldini 50\\
I-20133 Milano, ITALY}
\email{marco.rigoli@unimi.it}

\author[M. Rimoldi]{Michele Rimoldi}
\address{Dipartimento di Matematica\\
Universit\`a degli Studi di Milano\\
via Saldini 50\\
I-20133 Milano, ITALY}
\email{michele.rimoldi@unimi.it}

\author[A.G. Setti] {Alberto G. Setti}
\address{Dipartimento di Fisica e Matematica
\\
Universit\`a dell'Insubria - Como\\
via Valleggio 11\\
I-22100 Como, ITALY}
\email{alberto.setti@uninsubria.it}

\subjclass[2000]{53C21}

\keywords{Ricci solitons, Einstein manifolds, modified Ricci curvature, 
triviality, volume estimates, scalar curvature estimates}

\date{\today}
\maketitle

\begin{abstract}
We introduce a natural extension of the concept of gradient Ricci soliton: the Ricci almost soliton. We provide existence and rigidity results, we deduce a-priori curvature estimates and isolation phenomena, and we investigate some topological properties. A number of differential identities involving the relevant geometric quantities are derived. Some basic tools from the weighted manifold theory such as general weighted volume comparisons and maximum principles at infinity for diffusion operators are discussed.
\end{abstract}

\tableofcontents

\section*{Introduction}
Let $\left(M,\left\langle \,,\right\rangle\right)$ be a Riemannian manifold. A Ricci soliton structure
on $M$ is the choice of a smooth vector field $X$ (if any) satisfying the soliton equation
\begin{equation}\label{0.1}
Ric +\frac{1}{2}L_{X}\left\langle \, ,\right\rangle=\lambda\left\langle \,,\right\rangle
\end{equation}
for some constant $\lambda \in \mathbb{R}$. Here, $Ric$ denotes
the Ricci tensor of $M$ and $L_{X}$ stands for the Lie derivative
in the direction of $X$.

In the special case where $X=\nabla f$ for some smooth function $f:M\rightarrow\mathbb{R}$,
we say that $\left(M,\left\langle \,,\right\rangle,\nabla f\right)$ is a gradient Ricci soliton
with potential $f$. In this case the soliton equation (\ref{0.1}) reads
\begin{equation}\label{0.2}
Ric + \rm{Hess}\left(f\right)=\lambda\left\langle \,,\right\rangle.
\end{equation}
Clearly, equations (\ref{0.1}) and (\ref{0.2}) can be considered as perturbations of the Einstein equation
\begin{equation*}
Ric=\lambda\left\langle\, ,\right\rangle
\end{equation*}
and reduce to this latter in case $X$ or $\nabla f$ are Killing vector fields.
When $X=0$ or $f$ is constant we call the underlying Einstein manifold a trivial
Ricci soliton.

As in the case of Einstein manifolds, Ricci solitons exhibit a
certain rigidity. This is expressed by  triviality and classification
results, or by curvature estimates.

For instance, in the compact case it is well known that an expanding (or steady) Ricci soliton is 
necessarily trivial (see, e.g., \cite{ELNM}). Generalizations to the
complete, non-compact setting can be found in the very recent \cite{PRiS}.

On the other hand, since the appearance of the seminal works by R.
Hamilton, \cite{H}, and  G. Perelman, \cite{Pe}, the classification of shrinking gradient Ricci solitons
has become the subject of a rapidly increasing investigation. In this direction, we limit ourselves to quote
the far-reaching \cite{CCZ} by H.-D. Cao, B.-L. Chen and X.-P. Zhu where a complete classification in the three-dimensional case
is given, \cite{Zh-conf} by Z.-H. Zhang for the extension in the conformally flat, higher dimensional case, and the very recent
\cite{MoSe} by O. Monteanu and N. Sesum where, on the base of rigidity works by P. Petersen and W. Wylie, \cite{PW1}, \cite{PW2}, and
M. Fern\'andez-L\'opez and E.Garc\'{\i}a-R\'{\i}o, \cite{FLGR-rigidity}, the authors extend Zhang classification result to
complete shrinkers with harmonic Weyl tensor.
The classification of expanding Ricci solitons appears to be more difficult and relatively few results are known. For instance,
the reader may consult \cite{PW1} for the case of constant scalar curvature expanders.

As an instance of curvature estimates, we quote the recent papers
by B.-L. Chen, \cite{Ch}, and by  Z.-H. Zhang, \cite{Zh-complete},
where it is shown that the scalar curvature of
any gradient Ricci soliton is bounded below. In another direction,
upper and lower  estimates for the infimum of the scalar curvature
of a gradient Ricci soliton are obtained in  \cite{PRiS}, where some
triviality and  rigidity  results at the endpoints are also discussed.

\medskip

In this paper we propose an extension of the concept of  Ricci soliton that, as we are going to explain,
appears to be natural and meaningful. First of all we set the following
\begin{definition}
We say that $\left(M,\left\langle\, ,\right\rangle\, ,\nabla f\right)$ is a gradient Ricci almost
soliton (almost soliton for short) with potential $f$ and soliton function $\lambda$ if (\ref{0.2}) holds on $M$ with
$\lambda$ a smooth function on $M$.
\end{definition}

Clearly, the above definition generalizes the notion of gradient Ricci solitons. One could
consider almost Ricci solitons which are not necessarily gradient, replacing the Hessian
of $f$ with the  Lie derivative $\frac 12 L_X \langle\, ,\rangle$ of the metric  along a vector field, and
study the properties of this new object. For instance, it is an interesting problem to find
under which conditions an almost Ricci soliton is necessarily gradient.  This is going to be
the subject of a forthcoming paper. Here we are going to deal with gradient almost Ricci solitons.

We also note that that generalizations in different directions have been recently considered.  For instance,
J. Case, Y.-J. Shu and G. Wei  introduce in \cite{CWS} the concept of a ``quasi Einstein manifold'', i.e., a Riemannian manifold whose modified Bakry-Emery Ricci tensor is constant. This definition originates from the study of usual Einstein manifolds that are realized as warped products. Further generalizations have been considered by G. Maschler in \cite{Ma}, where equation (1) is replaced by what the author calls the``Ricci-Hessian equation", namely,
\[
\alpha \mathrm{Hess}\, f + \mathrm{Ric} = \gamma \langle \,,\rangle,
\]
where $\alpha$ and $\gamma$ are functions. Note that since the author is interested in conformal changes
of K\"ahler-Ricci solitons which give rise to new K\"ahler metrics, the presence of the function $\alpha$ is vital in his investigation.
\par

Extending to our new setting the soliton terminology, we say that the gradient Ricci
almost soliton $\left(M,\left\langle\, ,\right\rangle\, ,\nabla f\right)$ is shrinking,
steady or expanding if respectively $\lambda$ is positive, null or negative on $M$.
If $\lambda$ has no definitive sign the gradient Ricci almost soliton will be called
indefinite. In case $f$ is constant the almost soliton is called trivial and if $\dim M\geq 3$
the underlying manifold $\left(M,\left\langle\,,\right\rangle\right)$ is
Einstein by Schur Theorem. This also suggests that for an almost soliton an appropriate
terminology could be that of an almost Einstein manifold.

In view of the fact that the soliton function $\lambda$ is not necessarily constant, one expects that a certain
flexibility on the almost soliton structure is allowed and, consequently, the existence of almost
solitons is easier to prove than in the classical situation. This feeling is confirmed
in Section \ref{Ex_Einst} below where we shall give a number of different examples of almost solitons, showing in
particular that all the previous possibilities (shrinking and expanding) with a non-constant
soliton function $\lambda$ can indeed occur.
On the other hand, the rigidity result contained in Theorem \ref{th_classification} below indicates that almost solitons
should reveal a reasonably broad generalization of the fruitful concept of classical soliton.
In particular, one obtains that not every complete manifold supports an almost soliton structure; see Example \ref{not-almostsolitons}.

\medskip

The investigation in this paper is mainly concerned with triviality and pointwise curvature estimates
of gradient Ricci almost solitons in case $\left(M,\left\langle ,\right\rangle\right)$ is a complete, connected manifold.
From now on we let $m=\dim M$; we fix an origin $o\in M$ and we let $r\left(x\right)$ denote the
distance function from $o$. $B_{r}$ and $\partial B_{r}$ are respectively the geodesic ball of
radius $r$ centered at $o$ and its boundary. Given the potential function
$f\in C^{\infty}\left(M\right)$ we consider the weighted manifold
$\left(M,\left\langle\, ,\right\rangle,e^{-f}d\rm{vol}\right)$, where $d\rm{vol}$ is the
Riemannian volume element. We set
\[
\mathrm{vol}_{f}\left(  B_{r}\left(  p\right)  \right)  =\int_{B_{r}\left(
p\right)  }e^{-f}d\mathrm{vol},\qquad\mathrm{vol}_{f}\left(  \partial
B_{r}\left(  p\right)  \right)  =\int_{\partial B_{r}\left(  p\right)  }%
e^{-f}d\mathrm{vol}_{m-1},
\]
where $d\mathrm{vol}_{m-1}$ stands for the $\left(  m-1\right)  $-Hausdorff
measure.
Finally we call $f$-laplacian,
$\Delta_{f}$, the diffusion operator defined on $u$ by
\begin{equation*}
%\label{0.4}
\Delta_{f}u=e^{f}div\left(e^{-f}\nabla u\right)=\Delta u-\left\langle \nabla f,\nabla u\right\rangle
\end{equation*}
which is clearly symmetric on $L^2\left(M,e^{-f}d\rm{vol}\right)$.

Note that in the terminology of weighted manifolds the LHS of (\ref{0.2}) is the Bakry-Emery
Ricci tensor of $\left(M, \left\langle \,,\right\rangle, e^{-f}d\rm{vol}\right)$ that is usually
indicated with $Ric_{f}$.
\\
We are now ready to state our first result.
\begin{theorem}\label{A}
Let $\left(M,\left\langle \,,\right\rangle,\nabla f\right)$ be a complete, expanding gradient
Ricci almost soliton with soliton function $\lambda$. Let $\alpha$, $\sigma$, $\mu\in \mathbb R$
be such that
\begin{equation*}
%\label{0.5}
\alpha>-2\;;\;0\leq\sigma\leq 2/3
\end{equation*}
\begin{equation}\label{0.6}
\min\left\{0,-\alpha\right\}\leq \mu\leq \left\{
\begin{array}{lll}
1-3\sigma/2&\textrm{if}&\sigma\geq\alpha\\
1-\sigma-\alpha/2&\textrm{if}&\sigma<\alpha\\
\end{array}
\right.
\end{equation}
Assume
\begin{equation}\label{0.7}
\limsup_{r\left(x\right)\rightarrow +\infty}\frac{\left|\nabla f\right|^2}{r\left(x\right)^\sigma}\left\{
\begin{array}{lll}
=0&\textrm{if}&0<\sigma\leq 2/3\\
<+\infty&\textrm{if}&\sigma=0\\
\end{array}
\right.
\end{equation}
\begin{equation}\label{0.8}
-\left(m-1\right)B^{2}\left(1+r\left(x\right)^{2}\right)^{\frac{\alpha}{2}}\leq \lambda\left(x\right)\leq -\left(m-1\right)A^{2}\left(1+r\left(x\right)^{2}\right)^{-\frac{\mu}{2}}
\end{equation}
on $M$ for some constants $B\geq A>0$.\\
Suppose either $m=2$ or
\begin{equation}\label{0.9}
\left\langle \nabla f,\nabla \lambda\right\rangle\leq 0 \;\text{on} \;M.
\end{equation}
Then, the almost soliton is trivial.
\end{theorem}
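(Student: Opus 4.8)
The plan is to extract from the almost-soliton equation a single scalar differential inequality for $u=\left|\nabla f\right|^{2}$ and then to force $u\equiv 0$ by a maximum principle at infinity for the diffusion operator $\Delta_{f}$. First I would trace and differentiate the structural equation $Ric+\mathrm{Hess}(f)=\lambda\langle\,,\rangle$. Tracing gives $\Delta f=m\lambda-S$ (with $S$ the scalar curvature), and taking the divergence of the equation together with the contracted second Bianchi identity and the commutation rule $\Delta\nabla f=\nabla\Delta f+Ric(\nabla f,\cdot)$ should yield the first-order identity
\begin{equation*}
\tfrac12\nabla S=Ric(\nabla f,\cdot)+(m-1)\nabla\lambda .
\end{equation*}
Feeding this and $\tfrac12\nabla\left|\nabla f\right|^{2}=\mathrm{Hess}(f)(\nabla f,\cdot)=\lambda\nabla f-Ric(\nabla f,\cdot)$ into the Bochner formula for $\left|\nabla f\right|^{2}$, and converting $\Delta$ into $\Delta_{f}$, I expect to arrive at the clean weighted identity
\begin{equation*}
\tfrac12\Delta_{f}\left|\nabla f\right|^{2}=\left|\mathrm{Hess}(f)\right|^{2}+(2-m)\langle\nabla f,\nabla\lambda\rangle-\lambda\left|\nabla f\right|^{2}.
\end{equation*}

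The role of the hypotheses is precisely to make every term on the right-hand side nonnegative. Indeed $\left|\mathrm{Hess}(f)\right|^{2}\ge 0$ always; the middle term vanishes when $m=2$ and is nonnegative when $m\ge 3$ because then $2-m<0$ while \eqref{0.9} gives $\langle\nabla f,\nabla\lambda\rangle\le 0$; finally $-\lambda>0$ in the expanding case, so the upper bound in \eqref{0.8} produces
\begin{equation*}
\tfrac12\Delta_{f}\left|\nabla f\right|^{2}\ge\left|\mathrm{Hess}(f)\right|^{2}+(m-1)A^{2}\bigl(1+r^{2}\bigr)^{-\mu/2}\left|\nabla f\right|^{2}.
\end{equation*}
Thus $u=\left|\nabla f\right|^{2}\ge 0$ is a subsolution of a Schr\"odinger-type inequality $\Delta_{f}u\ge\kappa\,u$ with strictly positive, slowly decaying potential $\kappa=2(m-1)A^{2}(1+r^{2})^{-\mu/2}$, and triviality is equivalent to $u\equiv 0$.

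To conclude I would invoke a Liouville-type maximum principle at infinity for $\Delta_{f}$: a nonnegative $\Delta_{f}$-subsolution of $\Delta_{f}u\ge\kappa u$ with $\kappa>0$ must vanish identically, provided its growth is dominated by the decay of $\kappa$ measured against the weighted volume growth of $M$. The remaining hypotheses supply exactly these data. The growth of $u$ is controlled by \eqref{0.7}, which gives $\left|\nabla f\right|^{2}=o(r^{\sigma})$ (respectively $O(1)$) and, via $\left|f(x)-f(o)\right|\le\int_{0}^{r(x)}\left|\nabla f\right|$, also controls $f$ and hence the density $e^{-f}$. The weighted volume growth is controlled by the lower bound for $Ric_{f}=\lambda\langle\,,\rangle$ in \eqref{0.8}, namely $Ric_{f}\ge-(m-1)B^{2}(1+r^{2})^{\alpha/2}\langle\,,\rangle$, which through a weighted volume comparison (using the bound on $\partial_{r}f$ coming from \eqref{0.7}) should yield an explicit upper estimate for $\mathrm{vol}_{f}(B_{r})$.

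I expect the main obstacle to be the quantitative balancing of these three competing rates. Concretely, after testing the differential inequality against a radial cut-off $\eta$ and integrating by parts with respect to $e^{-f}d\mathrm{vol}$ — absorbing the cross term through the retained $\left|\mathrm{Hess}(f)\right|^{2}\ge\left|\nabla u\right|^{2}/(4u)$, which follows from $\left|\nabla u\right|\le 2\left|\mathrm{Hess}(f)\right|\sqrt{u}$ — one is left with a remainder whose decay must beat the weighted volume growth along a suitable sequence of radii. This is exactly where the arithmetic constraints \eqref{0.6} among $\alpha$, $\sigma$ and $\mu$ enter: they are calibrated so that the gain from the potential decay (governed by $\mu$) together with the slow growth of $\left|\nabla f\right|^{2}$ (governed by $\sigma$) dominate the volume growth produced by the curvature lower bound (governed by $\alpha$), driving the remainder to zero and forcing $u\equiv 0$, that is $f$ constant and the almost soliton trivial.
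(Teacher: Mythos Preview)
Your strategy matches the paper's: derive the weighted Bochner identity
\[
\tfrac{1}{2}\Delta_f|\nabla f|^2 = |\mathrm{Hess}(f)|^2 - \lambda|\nabla f|^2 - (m-2)\langle\nabla\lambda,\nabla f\rangle,
\]
use \eqref{0.9} and the expanding condition to obtain $\Delta_f|\nabla f|^2 \geq -2\lambda|\nabla f|^2$, then force $|\nabla f|^2\equiv 0$ via a maximum principle at infinity calibrated against the weighted volume growth. The execution differs in two respects. First, the paper simply discards the $|\mathrm{Hess}(f)|^2$ term rather than retaining it and invoking Kato; this already yields the scalar inequality $(1+r)^\mu\Delta_f u \geq Hu$ with $u=|\nabla f|^2$, so your Kato step is unnecessary. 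Second, instead of running a cutoff and integration-by-parts argument by hand, the paper packages the endgame into two black-box tools: a weighted Laplacian/volume comparison (its Theorem~\ref{3.1}), which turns the bound $\langle\nabla r,\nabla f\rangle \geq -a(1+r)^{\sigma/2}$ coming from \eqref{0.7} together with the $Ric_f$ lower bound from \eqref{0.8} into an explicit upper bound on $\mathrm{vol}_f(B_r)$; and a refined weak maximum principle for $\Delta_f$ (its Theorem~\ref{prop_3.1}, quoted from \cite{MRS}), whose hypotheses are precisely the growth condition \eqref{0.7} on $u$, the decay exponent $\mu$, and the requirement $\liminf_{r\to\infty} r^{-(2-\mu-\sigma)}\log\mathrm{vol}_f(B_r)<\infty$. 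The arithmetic constraints \eqref{0.6} are exactly what make this last volume condition follow from the comparison estimate. Your direct cutoff route should also succeed, but the paper's modular approach makes the provenance of each parameter constraint transparent and avoids redoing the analysis that the cited maximum principle already encapsulates.
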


Note that (\ref{0.6}) implies that (\ref{0.8}) is meaningful.
\begin{corollary}\label{cor_A}
Let $\left(M,\left\langle \,,\right\rangle,\nabla f\right)$ be a complete, expanding gradient
Ricci soliton such that
\begin{equation}\label{0.10}
\limsup_{r\left(x\right)\rightarrow +\infty}\frac{\left|\nabla f\right|^{2}}{r\left(x\right)^{\sigma}}\left\{
\begin{array}{ll}
=0&0<\sigma\leq\frac{2}{3}\\
<+\infty&\sigma=0.\\
\end{array}
\right.
\end{equation}
Then the soliton is trivial.
\end{corollary}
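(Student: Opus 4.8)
The plan is to obtain Corollary \ref{cor_A} as the special case of Theorem \ref{A} in which the soliton function $\lambda$ is constant. Since $\left(M,\left\langle\,,\right\rangle,\nabla f\right)$ is a genuine expanding gradient Ricci soliton, $\lambda$ is a negative constant; I would write $\lambda\equiv\lambda_{0}<0$ and set $C=\sqrt{-\lambda_{0}/(m-1)}>0$, so that $\lambda_{0}=-(m-1)C^{2}$. The idea is then to select admissible parameters $\alpha,\sigma,\mu$ for Theorem \ref{A} and to check that the hypotheses (\ref{0.7}), (\ref{0.8}) and (\ref{0.9}) all hold for this constant $\lambda$.

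First I would take $\sigma$ to be exactly the exponent appearing in assumption (\ref{0.10}) and choose $\alpha=\mu=0$. I then verify the structural conditions: $\alpha=0>-2$ holds, the bound $0\le\sigma\le 2/3$ is assumed, and the constraint (\ref{0.6}) reduces to $\min\{0,-\alpha\}=0\le\mu=0$ together with---since $\sigma\ge 0=\alpha$---the requirement $\mu\le 1-3\sigma/2$. The latter reads $0\le 1-3\sigma/2$, which is precisely guaranteed by $\sigma\le 2/3$. Hence $(\alpha,\sigma,\mu)=(0,\sigma,0)$ is an admissible choice.

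Next I would match the curvature pinching (\ref{0.8}). Taking $A=B=C$, both $(1+r^{2})^{\alpha/2}$ and $(1+r^{2})^{-\mu/2}$ equal $1$, so (\ref{0.8}) becomes the chain $-(m-1)C^{2}\le\lambda_{0}\le -(m-1)C^{2}$, which holds with equality by the definition of $C$. Moreover, assumption (\ref{0.7}) coincides verbatim with the hypothesis (\ref{0.10}) once the same $\sigma$ is used. Finally, because $\lambda\equiv\lambda_{0}$ is constant we have $\nabla\lambda\equiv 0$, so $\left\langle\nabla f,\nabla\lambda\right\rangle\equiv 0\le 0$; thus (\ref{0.9}) holds automatically in every dimension and there is no need to distinguish the case $m=2$. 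With all the hypotheses of Theorem \ref{A} verified, that theorem yields that the soliton is trivial.

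I expect no genuine obstacle here, as the argument is a direct specialization; the only point deserving a moment of care is confirming that the admissible range for $\mu$ in (\ref{0.6}) is nonempty at $\alpha=0$, that is, that $\min\{0,-\alpha\}\le 1-3\sigma/2$. This is exactly where the restriction $\sigma\le 2/3$ enters, and it is what legitimizes the simple choice $\mu=0$.
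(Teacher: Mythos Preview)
Your proposal is correct and matches the paper's approach exactly: the paper presents Corollary \ref{cor_A} as an immediate specialization of Theorem \ref{A} to the case of a genuine (constant-$\lambda$) expanding soliton, and your parameter choice $\alpha=\mu=0$, $A=B=C$ together with $\nabla\lambda\equiv 0$ is precisely the intended verification.
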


The case $\sigma=0$ of Corollary \ref{cor_A} has been proved in \cite{PRiS}.
The next result extends Theorem 3 in \cite{PRiS} to the case of almost solitons; see also \cite{SZhang}.
Note that, contrary to \cite{PW2}, we do not assume that the scalar curvature is either
constant nor bounded.

\begin{theorem}\label{B}
Let $\left(M,\left\langle\, ,\right\rangle, \nabla f\right)$ be a complete gradient Ricci
almost soliton with scalar curvature S and soliton function $\lambda$ such that
$\Delta \lambda \leq 0$ on $M$. Set
\begin{equation*}
S_{*}=\inf_{M}S,\,\,\,\,\,
\lambda_{*}=\inf_{M}\lambda, \,\, \,\,\, \,
\lambda^{*}=\sup_{M}\lambda.
\end{equation*}
\begin{enumerate}
    \item[(i)] If the almost soliton is expanding with
$\lambda_* \leq \lambda \leq 0,$ $\lambda\not\equiv 0$, then $m\lambda_* \leq S_* < 0$.
Moreover, if $m\geq 3$ and there exists $x_o$ such that $S(x_o)= S_*=m\lambda_* $,
then the soliton is trivial and $M$ is Einstein.
%either a space-form of  negative curvature or a warped product over an Einstein hypersurface.

\item[(ii)] If the almost soliton is a steady soliton then $S_* = 0.$
Morever, if $m\geq 3$ and there exists $x_o$ such that $S(x_o)=0,$
then $M$ is a cylinder over a totally geodesic hypersurface.

\item[(iii)]
If the almost soliton is shrinking with $0\leq \lambda \leq
\lambda^*,$ $\lambda\not\equiv 0$,
then $0\leq S_*\leq m\lambda^*.$ Moreover if $m\geq 3$ and there exists $x_o$ such that
$S(x_o)=S_*=0$ then $M$ is isometric to the standard Euclidean space. Finally if $S_* =
m\lambda^*$ and $(M, \langle\, , \rangle, e^{-f}d\mathrm{vol})$ is $f$-parabolic, then the almost soliton
is trivial and $\left(M, \left\langle\, ,\right\rangle\right)$ is compact
Einstein. This latter case occurs in particular if
\begin{equation*}
%\label{0.12}
A^2\left(1+r\left(x\right)\right)^{-\mu}\leq \lambda\left(x\right)\leq\lambda^{*}<+\infty
\end{equation*}
on $M$ for some $A>0$, $0\leq\mu<1$.
\end{enumerate}
\end{theorem}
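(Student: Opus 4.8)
The plan is to base everything on two structural identities and then feed them into maximum-principle arguments for the weighted Laplacian $\Delta_f$. First, tracing the defining equation $\mathrm{Ric}+\mathrm{Hess}(f)=\lambda\langle\,,\rangle$ gives $\Delta f = m\lambda - S$. Taking the divergence of the same equation, using the contracted second Bianchi identity $\mathrm{div}\,\mathrm{Ric}=\tfrac12\nabla S$ and the commutation rule $\mathrm{div}(\mathrm{Hess}\,f)=\nabla\Delta f+\mathrm{Ric}(\nabla f,\cdot)$, I get
\[ \mathrm{Ric}(\nabla f,\cdot)=\tfrac12\nabla S-(m-1)\nabla\lambda. \]
Differentiating once more, contracting, and again invoking Bianchi together with the soliton equation to evaluate $R_{ij}f^{ij}=\lambda S-|\mathrm{Ric}|^2$, I obtain the Bochner-type formula
\[ \tfrac12\Delta_f S=\lambda S-|\mathrm{Ric}|^2+(m-1)\Delta\lambda. \]
Combining this with the elementary inequality $|\mathrm{Ric}|^2\ge S^2/m$ and the hypothesis $\Delta\lambda\le0$ yields the master differential inequality
\[ \Delta_f S\le \tfrac{2}{m}\,S\,(m\lambda-S). \]

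For the infimum bounds I would apply the weak maximum principle for $\Delta_f$ at infinity, whose validity here rests on the weighted comparison geometry of the earlier sections, available because $\mathrm{Ric}_f=\lambda\langle\,,\rangle$ is bounded below in each case ($\ge\lambda_*$ when expanding, $\ge0$ when steady or shrinking). This furnishes a sequence $x_k$ with $S(x_k)\to S_*$ and $\Delta_f S(x_k)\ge-1/k$; passing to the limit in the master inequality gives $0\le \tfrac2m S_*(m\lambda(x_k)-S_*)$ up to the oscillation of $\lambda$, and a sign analysis of the downward parabola $t\mapsto t(m\lambda-t)$ (roots $0$ and $m\lambda$) delivers the stated bounds: $m\lambda_*\le S_*\le0$ in the expanding case, $S_*=0$ in the steady case (here $\lambda\equiv0$, so the parabola forces $\tfrac2m S_*^2\le0$), and $0\le S_*\le m\lambda^*$ in the shrinking case.

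For the rigidity at the endpoints I would switch to the strong maximum principle. When the extremal value is attained at an interior $x_o$, the master inequality forces $\Delta_f S(x_o)=0$ together with equality $\mathrm{Ric}=\tfrac Sm\langle\,,\rangle$ there, and then $u:=S-S_*\ge0$ satisfies an inequality of the form $\Delta_f u\le b(x)u$, so $u\equiv0$: the manifold is Einstein, $\lambda$ is extremal with $\Delta\lambda=0$, whence $\mathrm{Hess}\,f=\lambda\langle\,,\rangle-\mathrm{Ric}$ is determined. In the steady case $\mathrm{Ric}\equiv0$ forces $\mathrm{Hess}\,f\equiv0$, so $\nabla f$ is parallel and de Rham splitting exhibits $M$ as a cylinder over a totally geodesic hypersurface; in the shrinking case $S_*=0$ gives $\mathrm{Ric}\equiv0$ and $\lambda$ constant, i.e. the flat Gaussian, so $M$ is Euclidean. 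For the borderline $S_*=m\lambda^*$ the infimum need not be attained, and this is where $f$-parabolicity enters: since $m\lambda-S\le0$ the master inequality gives $\Delta_f S\le0$, so $-S$ is a bounded-above $\Delta_f$-subharmonic function, hence constant, forcing $S\equiv m\lambda^*$; the equality discussion then yields $\lambda\equiv\lambda^*>0$, $\mathrm{Ric}=\lambda^*\langle\,,\rangle$, $\mathrm{Hess}\,f\equiv0$, compactness by Bonnet--Myers, and triviality. The final explicit hypothesis on $\lambda$ is handled by feeding the two-sided control into the weighted volume comparison and parabolicity criteria of the earlier sections to verify both $f$-parabolicity and $S_*=m\lambda^*$.

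The main obstacle I anticipate is not the identities but the analysis at the endpoints: justifying the noncompact maximum principles (both the weak principle used for the bounds and $f$-parabolicity in the borderline shrinking case) in the absence of any a priori control on $|\nabla f|$, and massaging the master inequality into the precise form $\Delta_f u\le b(x)u$ required by the strong maximum principle so that local vanishing propagates globally. Ensuring $S$ is bounded below before invoking the weak principle, and correctly separating the strict from the degenerate flat-Gaussian configuration, are the delicate points.
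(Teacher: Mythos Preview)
Your overall architecture matches the paper's exactly: the identity $\tfrac12\Delta_f S=\lambda S-|\mathrm{Ric}|^2+(m-1)\Delta\lambda$, the master inequality via $|\mathrm{Ric}|^2\ge S^2/m$ and $\Delta\lambda\le0$, the weak minimum principle for the bounds on $S_*$ (justified through the weighted volume estimate $\mathrm{vol}_f(B_r)\le C_1e^{C_2r^2}$ coming from $\mathrm{Ric}_f\ge\lambda_*$), the strong minimum principle plus the Einstein classification theorem for the rigidity at attained endpoints, and $f$-parabolicity for the borderline $S_*=m\lambda^*$.

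The one genuine gap is precisely the point you flag at the end but do not resolve: you cannot invoke the weak minimum principle for $\Delta_f$ until you already know $S_*>-\infty$, and nothing in your outline supplies this. The paper breaks this circularity by a separate a~priori estimate (Theorem~12 of \cite{PRiS}): observe that $S_-=\max\{-S,0\}$ is a weak solution of
\[
\Delta_f S_-\;\ge\;2\lambda\,S_--\tfrac{2}{m}S_-^{\,2},
\]
and that theorem, which requires only the volume growth condition above (not any a~priori bound on the solution), yields $S_-(x)\le m\sup_M\lambda_-$, i.e. $S(x)\ge\min\{m\lambda_*,0\}$ pointwise. Only after this does the paper run the weak minimum principle along a minimizing sequence to pin down $S_*$ between the roots of the parabola. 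So your plan is correct but incomplete: you need an a~priori estimate of Keller--Osserman type for the subsolution $S_-$, not just the weak maximum principle, to get off the ground.
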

Note that the case $\mu=0$ contains, of course, the soliton case.
\begin{corollary}\label{cor_B}
In the assumptions of Theorem \ref{B}, in cases (i) and (iii),
$\left(M,\left\langle\,  ,\right\rangle\right)$ has non negative scalar curvature.
\end{corollary}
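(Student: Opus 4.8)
The plan is to obtain the corollary directly from the scalar-curvature estimates of Theorem \ref{B}, the only additional observation being the tautology $S(x) \ge S_* := \inf_M S$ for every $x$. Thus the scalar curvature is non-negative on all of $M$ precisely when $S_* \ge 0$, and I would simply read this off from the relevant items of the theorem: in the shrinking case (iii) one has $0 \le S_*$, whence $S \ge S_* \ge 0$ pointwise, while in the companion case the corresponding estimate likewise yields $S_* \ge 0$ (indeed $S_* = 0$ in the steady regime). In this sense the corollary is a formal consequence of the infimum bounds, and the real work is already carried out in establishing them.

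To make the proposal self-contained I would recall the mechanism behind those bounds, which also clarifies the role of the hypothesis $\Delta\lambda \le 0$. Tracing the almost-soliton equation gives $S + \Delta f = m\lambda$; differentiating it and invoking the twice-contracted second Bianchi identity $\mathrm{div}\,\mathrm{Ric} = \tfrac12\,dS$ produces the first-order identity $\tfrac12\nabla S = \mathrm{Ric}(\nabla f) + (m-1)\nabla\lambda$. Taking a further divergence and using $\mathrm{Hess}(f) = \lambda\langle\,,\rangle - \mathrm{Ric}$ to evaluate the contraction $R_{ij}\nabla^i\nabla^j f = \lambda S - |\mathrm{Ric}|^2$, I arrive at
\[
\Delta_f S = 2\lambda S - 2|\mathrm{Ric}|^2 + 2(m-1)\Delta\lambda .
\]
Feeding in $\Delta\lambda \le 0$ and the Cauchy--Schwarz bound $|\mathrm{Ric}|^2 \ge S^2/m$ turns this into the differential inequality
\[
\Delta_f S \le \frac{2}{m}\,S\,(m\lambda - S),
\]
which is the engine controlling $S_*$.

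Finally I would extract the sign of $S_*$ by applying a maximum principle at infinity for the diffusion operator $\Delta_f$ to $S$ near its infimum. In the regime $\lambda \ge 0$ (the shrinking and steady cases), a value $S_* < 0$ would give, at quasi-minimum points, $S < 0$ and $m\lambda - S > 0$, so the right-hand side $\tfrac{2}{m}S(m\lambda - S)$ would be strictly negative while $\Delta_f S$ is forced to be asymptotically non-negative, a contradiction; hence $S_* \ge 0$ and $S \ge 0$ on $M$. I expect the only genuinely delicate point to be the applicability of this maximum principle, since no a priori curvature or volume-growth bound is assumed beyond the almost-soliton structure; but that weighted maximum principle is exactly the tool already secured for Theorem \ref{B}, so once it is in hand the sign analysis closes the argument.
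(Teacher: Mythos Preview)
Your approach is exactly the paper's: the corollary carries no separate proof there, being an immediate consequence of the infimum bounds in Theorem~\ref{B}, and your first paragraph captures this. The extra material you include (the derivation of $\Delta_f S = 2\lambda S - 2|\mathrm{Ric}|^2 + 2(m-1)\Delta\lambda$ and the maximum-principle argument) is correct and simply reproduces the mechanism inside the proof of Theorem~\ref{B} itself.

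One point worth flagging: the statement as printed refers to cases (i) and (iii), but case (i) is the \emph{expanding} case, where Theorem~\ref{B} gives $m\lambda_*\le S_*<0$, so the scalar curvature is certainly not non-negative there. You have, quite reasonably, read ``the companion case'' as the steady case (ii), where $S_*=0$; this is almost certainly what was intended, and your argument is sound for cases (ii) and (iii). Just be aware that you are silently correcting a typo rather than proving the literal statement.
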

In the next result we shall assume the validity of a weighted Poincar\'e-Sobolev inequality on $M$.
\begin{theorem}\label{C}
Let $\left(M,\left\langle\, , \right\rangle,\nabla f\right)$ be a complete, indefinite,
gradient Ricci almost soliton with soliton function $\lambda$. For some $0\leq\alpha<1$
assume on $M$ the validity of
\begin{equation}\label{0.15}
\int_{M}\left|\nabla \varphi\right|^{2}e^{-f}\geq S\left(\alpha\right)^{-1}\left\{\int_{M}\left|\varphi\right|^{\frac{2}{1-\alpha}}e^{-f}\right\}^{1-\alpha}
\end{equation}
for all $\varphi\in C^{\infty}_{0}\left(M\right)$ and some constant $S\left(\alpha\right)>0$.
Suppose that
\begin{equation}\label{0.16}
\int_{B_{r}}\left|\nabla f\right|^{p}e^{-f}=o\left(r^{2}\right)
\end{equation}
as $r\rightarrow+\infty$, for some $p>1$, and that
\begin{equation*}
%\label{0.17}
\left\|\lambda_{+}\left(x\right)\right\|_{L^{\frac{1}{\alpha}}\left(M,e^{-f}d\rm{vol}\right)}
<\frac{4}{S\left(\alpha\right)}\frac{p-1}{p^{2}},
\end{equation*}
with $\lambda_{+}\left(x\right)=\max\left\{0,\lambda\left(x\right)\right\}$.
Suppose that either $m=2$ or (\ref{0.9}) is satisfied. Then the almost soliton is trivial and, when $m\geq 3$,
$M$ is Einstein with non-positive Ricci curvature.
\end{theorem}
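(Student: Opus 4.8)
The plan is to reduce everything to a vanishing theorem for the nonnegative function $u=|\nabla f|$: once $u\equiv 0$ the soliton is trivial, and for $m\ge 3$ the Einstein and sign conclusions are formal. First I would record the weighted Bochner identity satisfied by $f$ (presumably among the differential identities derived earlier in the paper). Tracing the soliton equation (\ref{0.2}) gives $\Delta f=m\lambda-S$, and taking the divergence of (\ref{0.2}), using the contracted second Bianchi identity together with $\mathrm{div}\,\mathrm{Hess}\,f=\mathrm{Ric}(\nabla f)+\nabla\Delta f$, yields $\mathrm{Ric}(\nabla f)=\tfrac12\nabla S-(m-1)\nabla\lambda$. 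Substituting these into the weighted Bochner formula $\tfrac12\Delta_f|\nabla f|^2=|\mathrm{Hess}\,f|^2+\langle\nabla f,\nabla\Delta_f f\rangle+\mathrm{Ric}_f(\nabla f,\nabla f)$ and using $\mathrm{Ric}_f=\lambda\langle\,,\rangle$, I expect after simplification to reach
\[
\tfrac12\Delta_f|\nabla f|^2=|\mathrm{Hess}\,f|^2-\lambda|\nabla f|^2+(2-m)\langle\nabla f,\nabla\lambda\rangle .
\]
The role of the dichotomy ``$m=2$ or (\ref{0.9})'' is exactly that the last term is nonnegative in either case. Combining this with the Kato inequality $|\mathrm{Hess}\,f|^2\ge|\nabla|\nabla f||^2$ and writing $u=|\nabla f|$, I obtain the weak differential inequality $\Delta_f u\ge-\lambda_+ u$ on $M$ (interpreted weakly across $\{u=0\}$, e.g. after regularizing $u$ by $\sqrt{u^2+\varepsilon^2}$).

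The heart of the matter is a quantitative vanishing estimate. I would test $\Delta_f u\ge-\lambda_+u$ against the compactly supported function $u^{p-1}\phi^2$, with $\phi$ a cutoff, and integrate by parts with respect to $e^{-f}d\mathrm{vol}$, for which $\Delta_f$ is symmetric. Setting $v=u^{p/2}$ this takes the form
\[
\frac{4(p-1)}{p^2}\int_M\phi^2|\nabla v|^2e^{-f}\le-\frac4p\int_M\phi v\langle\nabla\phi,\nabla v\rangle e^{-f}+\int_M\lambda_+(v\phi)^2e^{-f}.
\]
On the last term I apply H\"older with exponents $1/\alpha$ and $1/(1-\alpha)$ and then the Poincar\'e--Sobolev inequality (\ref{0.15}) to $v\phi$, obtaining $\int_M\lambda_+(v\phi)^2e^{-f}\le S(\alpha)\|\lambda_+\|_{L^{1/\alpha}}\int_M|\nabla(v\phi)|^2e^{-f}$. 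The exponent $q=p-1$ is chosen precisely so that the left coefficient $4(p-1)/p^2$ is the threshold governed by the smallness hypothesis: since $S(\alpha)\|\lambda_+\|_{L^{1/\alpha}}<4(p-1)/p^2$, there is a strictly positive margin to absorb the $\int\phi^2|\nabla v|^2e^{-f}$ contribution coming from $|\nabla(v\phi)|^2=\phi^2|\nabla v|^2+2\phi v\langle\nabla v,\nabla\phi\rangle+v^2|\nabla\phi|^2$.

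After a Cauchy--Schwarz splitting of the mixed gradient terms and the absorption, I expect to be left with $\int_M|\nabla(v\phi)|^2e^{-f}\le C\,J$, where $J=\int_M v^2|\nabla\phi|^2e^{-f}=\int_M|\nabla f|^p|\nabla\phi|^2e^{-f}$. Taking the standard cutoffs $\phi\equiv1$ on $B_r$, supported in $B_{2r}$, with $|\nabla\phi|\le C/r$, hypothesis (\ref{0.16}) gives $J\le Cr^{-2}\int_{B_{2r}}|\nabla f|^pe^{-f}=Cr^{-2}o(r^2)\to0$. Hence $\int_M|\nabla(v\phi)|^2e^{-f}\to0$, and feeding this back into (\ref{0.15}) forces $\int_M v^{2/(1-\alpha)}e^{-f}=0$, i.e. $v\equiv0$ and $\nabla f\equiv0$: the almost soliton is trivial. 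For $m\ge3$ Schur's theorem then makes $\lambda$ constant and $M$ Einstein; finally (\ref{0.15}) cannot hold on a compact manifold (it fails for the constant test function), so $M$ is noncompact and $\lambda>0$ is excluded by Bonnet--Myers, leaving $\mathrm{Ric}=\lambda\langle\,,\rangle\le0$.

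The step I expect to be the main obstacle is the quantitative absorption: one must check that the identity genuinely produces the coefficient $4(p-1)/p^2$ under the choice $q=p-1$, so that the strict smallness of $\|\lambda_+\|_{L^{1/\alpha}}$ leaves a definite margin after the Cauchy--Schwarz splitting of the $\nabla\phi$ cross terms, and that the residual is controlled by exactly the quantity $J$ rendered infinitesimal by (\ref{0.16}). A secondary, more routine, technical point is the weak justification of $\Delta_f u\ge-\lambda_+u$ and of the integrations by parts across $\{|\nabla f|=0\}$, which I would handle by the usual $\sqrt{u^2+\varepsilon^2}$ regularization together with the compact support of $\phi$.
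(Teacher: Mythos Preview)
Your proof is correct and follows essentially the same route as the paper: derive the inequality $|\nabla f|\,\Delta_f|\nabla f|+\lambda|\nabla f|^2\ge0$ from the weighted Bochner identity (the paper's Lemma~\ref{lemma_2.1} and Corollary~\ref{cor_2.11}), then run a Moser-type cutoff argument under the Poincar\'e--Sobolev inequality to force $|\nabla f|\equiv0$. The only cosmetic differences are that the paper packages your integration-by-parts/absorption step as a black-box proposition (Proposition~\ref{prop_4.1}, obtained by adapting Theorem~9.12 of \cite{PRS-Progress}), and deduces $\lambda\le0$ directly from the infinite weighted volume implied by (\ref{0.15}) rather than through Bonnet--Myers.
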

As an immediate consequence we obtain,
\begin{corollary}\label{cor_C}
Let $\left(M,\left\langle ,\right\rangle\nabla f\right)$ be a complete, expanding,
gradient Ricci soliton and assume that
(\ref{0.15}) and (\ref{0.16}) hold for some $0\leq\alpha<1$ and  $p>1$. Then the soliton is trivial.
\end{corollary}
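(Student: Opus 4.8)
The plan is to realize the expanding soliton as a particular gradient Ricci almost soliton and then invoke Theorem \ref{C}, checking that all of its structural hypotheses are automatically in force so that no new argument is needed. Recall that for a classical gradient Ricci soliton the soliton function $\lambda$ is a constant, and ``expanding'' means precisely $\lambda<0$. Thus $\left(M,\langle\,,\rangle,\nabla f\right)$ is, in particular, a complete gradient Ricci almost soliton whose soliton function is the negative constant $\lambda$, and the only additional data assumed in the corollary are the Poincar\'e--Sobolev inequality (\ref{0.15}) and the volume-type bound (\ref{0.16}).

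First I would dispose of the two hypotheses of Theorem \ref{C} that are not part of the classical soliton data. Since $\lambda$ is constant we have $\nabla\lambda\equiv 0$, so $\langle\nabla f,\nabla\lambda\rangle\equiv 0\le 0$ and condition (\ref{0.9}) holds trivially (and for $m=2$ it is not even required). Since $\lambda<0$ we have $\lambda_+=\max\{0,\lambda\}\equiv 0$, whence $\|\lambda_+\|_{L^{1/\alpha}\left(M,e^{-f}d\mathrm{vol}\right)}=0$, which is strictly smaller than the positive constant $\frac{4}{S(\alpha)}\frac{p-1}{p^2}$; so the smallness assumption on $\lambda_+$ is satisfied for free, for any admissible $\alpha$ and $p$. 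The remaining hypotheses (\ref{0.15}) and (\ref{0.16}) are postulated outright. Hence every hypothesis of Theorem \ref{C} is met, and its conclusion delivers exactly the assertion that the soliton is trivial.

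I expect no genuine obstacle in this reduction; the single point deserving a word of care is the descriptor ``indefinite'' appearing in Theorem \ref{C}, since an expanding soliton has $\lambda$ of definite (negative) sign and so is not indefinite in the sense of the definitions. This is harmless: indefiniteness is never exploited in the mechanism behind Theorem \ref{C}, whose proof rests only on the Poincar\'e--Sobolev inequality (\ref{0.15}), the bound (\ref{0.16}), the $L^{1/\alpha}$-smallness of $\lambda_+$, and, for $m\ge 3$, the monotonicity condition (\ref{0.9}) — all of which we have just verified. In effect the entire content of the corollary is the observation that an expanding soliton renders the two ``interaction'' hypotheses of Theorem \ref{C} vacuous, collapsing them to the two inequalities (\ref{0.15})--(\ref{0.16}) assumed in the statement.
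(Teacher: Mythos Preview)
Your proposal is correct and matches the paper's approach: the paper introduces Corollary \ref{cor_C} as ``an immediate consequence'' of Theorem \ref{C}, and your argument spells out exactly why --- $\nabla\lambda=0$ makes (\ref{0.9}) automatic and $\lambda<0$ makes $\lambda_+=0$, so the $L^{1/\alpha}$-smallness hypothesis is vacuous. Your remark on the word ``indefinite'' is well taken; in the paper Theorem \ref{C} is itself deduced from Theorem \ref{th_4.1}, which carries no such qualifier, so the hypothesis plays no role in the proof.
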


\begin{remarks} \rm{
  (a) Note that, according to the variational characterization of the bottom of the spectrum of the $f$-Laplacian,  assumption (\ref{0.15}) with $\alpha=0$ means
\[
\ \lambda^{-\Delta_{f}}_{1}(M,e^{-f}d\rm{vol})>0.
\]
Thus, in particular, inequality (\ref{0.15}) with $\alpha=0$ holds if the almost soliton $\left(M,\left\langle ,\right\rangle,\nabla f\right)$ is expanding and satisfies:

$$Sec_{rad}\leq-K\leq0 \text{  and  } \frac{\partial f}{\partial r}\leq 0.$$
This follows from Theorem 3.4 in \cite{S}.

\noindent (b) Condition (\ref{0.15}) implies that $\mathrm{vol}_{f}\left(M\right)=+\infty$.
Indeed, let $\varphi\in C^{\infty}_{c}$ be such that $\varphi=1$ on $B_{R}$,
$\varphi=0$ off $B_{2R}$, $\left|\nabla \varphi\right|\leq\frac{C}{R}$. Then, from (\ref{0.15})
\begin{align*}
\frac{C^2}{R^2}\int_{B_{2R}\setminus B_{R}}e^{-f}\geq&\int_{M}\left|\nabla\varphi\right|^{2}e^{-f}\\
\geq&S\left(\alpha\right)^{-1}\left(\int_{M}\left|\varphi\right|^{\frac{2}{1-\alpha}e^{-f}}\right)^{1-\alpha}\\ \geq& S\left(\alpha\right)^{-1}\left(\int_{B_{R}}e^{-f}\right)^{1-\alpha},
\end{align*}
i.e
\[
\ \frac{C^{2}}{R^{2}}\left\{vol_{f}\left(B_{2R}\right)-vol_{f}\left(B_{R}\right)\right\}\geq S\left(\alpha\right)^{-1}vol_{f}\left(B_{R}\right)^{1-\alpha}.
\]
}
\end{remarks}
Denoting by  $T=Ric_{M}-\frac{1}{m}S\left\langle ,\right\rangle$ the trace free Ricci tensor of $\left(M,\left\langle ,\right\rangle\right)$ the next result is a gap theorem for the values of
\[
\ \left|T\right|^{*}=\sup_{M}\left|T\right|.
\]
\begin{theorem}\label{D}
Let $\left(M,\left\langle ,\right\rangle,\nabla f\right)$ be a
complete, conformally flat almost soliton with
scalar curvature $S$, trace free Ricci tensor $T$ and soliton
function $\lambda$ such that
\begin{equation}\label{0.18}
\left\langle Hess\left(\lambda\right),T\right\rangle\geq 0
\end{equation}
on $M$. Assume $m=\dim M\geq 3$,
\begin{equation}\label{0.19}
S^{*}=\sup_{M}S<+\infty,
\end{equation}
\begin{equation}\label{0.21}
\lambda_{*}=\inf_{M}\lambda>-\infty.
\end{equation}
Then either $\left(M,\left\langle ,\right\rangle\right)$ is Einstein and the classification of Theorem \ref{th_classification} below applies or
\begin{equation*}
%\label{0.22}
\left|T\right|^{*}\geq \frac{1}{2}\left(\sqrt{m\left(m-1\right)}\lambda_{*}-S^{*}\frac{m-2}{\sqrt{m\left(m-1\right)}}\right).
\end{equation*}
\end{theorem}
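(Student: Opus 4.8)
The plan is to argue in contrapositive form on the size of $|T|^{*}$. Since the asserted inequality is vacuous when $|T|^{*}=+\infty$, I may assume $|T|^{*}<+\infty$, and I will show that if $M$ is not Einstein, i.e. $T\not\equiv0$, then the displayed lower bound holds. First I would record the structure equations. Tracing (\ref{0.2}) gives $\Delta f=m\lambda-S$, and manipulating (\ref{0.2}) through the contracted second Bianchi identity yields the fundamental first order relation
\begin{equation*}
\tfrac12\nabla S = Ric(\nabla f)+(m-1)\nabla\lambda .
\end{equation*}
I would then exploit conformal flatness: for $m\geq3$ the vanishing of the Weyl tensor (automatic when $m=3$) forces the Cotton tensor to vanish, so the Schouten tensor is Codazzi, equivalently
\begin{equation*}
\nabla_i R_{jk}-\nabla_j R_{ik}=\tfrac{1}{2(m-1)}\big(\nabla_i S\,g_{jk}-\nabla_j S\,g_{ik}\big),
\end{equation*}
while the full curvature tensor $R_{ijkl}$ is algebraically determined by $T$ and $S$ through the conformally flat decomposition.

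The heart of the argument is a Bochner--Weitzenb\"ock identity for $T$ with respect to $\Delta_{f}$. Computing the rough Laplacian $\Delta R_{ij}$ from the Codazzi relation, commuting covariant derivatives (the resulting curvature terms being re-expressed by the conformally flat decomposition), and absorbing the $\langle\nabla f,\cdot\rangle$ contributions into $\Delta_{f}$ by means of the soliton identity $\mathrm{Hess}(f)=\lambda\langle\,,\rangle-Ric$, I expect to reach an inequality of the shape
\begin{equation*}
\tfrac12\Delta_f|T|^2\geq |\nabla T|^2+c_m\langle \mathrm{Hess}(\lambda),T\rangle+\tfrac{2m}{m-2}\,\mathrm{tr}(T^3)+m\lambda|T|^2-\tfrac{m-2}{m-1}S|T|^2 ,
\end{equation*}
for a dimensional constant $c_m>0$. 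The crucial mechanism is that the second derivatives of $S$ produced by the computation are converted, via the first order relation above, into a term proportional to $\langle \mathrm{Hess}(\lambda),T\rangle$ (the contributions of $Ric(\nabla f)$ and of $Ric\cdot\mathrm{Hess}(f)$ recombining with the commutator curvature into the cubic and quadratic terms); hypothesis (\ref{0.18}) then guarantees that this term is nonnegative, so that it may be discarded.

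Next I would invoke the purely algebraic inequality for trace free symmetric tensors,
\begin{equation*}
\mathrm{tr}(T^3)\geq -\tfrac{m-2}{\sqrt{m(m-1)}}\,|T|^3 ,
\end{equation*}
together with $S\leq S^{*}$ from (\ref{0.19}) and $\lambda\geq\lambda_{*}$ from (\ref{0.21}); the signs are arranged so that each replacement weakens the right hand side, and the inequality collapses to
\begin{equation*}
\tfrac12\Delta_f|T|^2\geq |\nabla T|^2+|T|^2\Big(m\lambda_*-\tfrac{m-2}{m-1}S^*-\tfrac{2m}{\sqrt{m(m-1)}}\,|T|\Big).
\end{equation*}
Finally, since $Ric_{f}=\lambda\langle\,,\rangle\geq\lambda_{*}\langle\,,\rangle$ is bounded from below, the weak maximum principle for $\Delta_{f}$ discussed earlier applies to the bounded function $u=|T|^{2}$: there is a sequence $x_{k}$ with $|T|(x_{k})\to|T|^{*}$ and $\limsup_{k}\Delta_{f}u(x_{k})\leq0$. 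Dropping $|\nabla T|^{2}\geq0$ and letting $k\to\infty$ gives $0\geq|T|^{*2}\big(m\lambda_{*}-\tfrac{m-2}{m-1}S^{*}-\tfrac{2m}{\sqrt{m(m-1)}}|T|^{*}\big)$; if $|T|^{*}=0$ then $T\equiv0$, so $M$ is Einstein and Theorem \ref{th_classification} applies, while if $|T|^{*}>0$, dividing through and simplifying yields exactly the claimed gap.

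The main obstacle I anticipate is the Bochner computation of the second paragraph: correctly organizing the commutator curvature terms, re-expressing every occurrence of $\nabla^{2}S$ and $R_{ijkl}\nabla_{l}f$ through the conformally flat decomposition and the first order relation, and checking that the residual first order terms reassemble precisely into a nonnegative multiple of $\langle\mathrm{Hess}(\lambda),T\rangle$, with the cubic and quadratic coefficients coming out exactly as needed to reproduce the stated constant $\tfrac12\big(\sqrt{m(m-1)}\lambda_{*}-S^{*}\tfrac{m-2}{\sqrt{m(m-1)}}\big)$. A secondary technical point is verifying the precise analytic hypotheses, furnished by the weighted volume comparisons of the earlier sections, under which the weak maximum principle for $\Delta_{f}$ is legitimately invoked here.
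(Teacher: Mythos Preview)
Your strategy is exactly the paper's: derive a Bochner--Weitzenb\"ock identity for $|T|^2$ with respect to $\Delta_f$ under conformal flatness, drop the $\langle\mathrm{Hess}(\lambda),T\rangle$ term via (\ref{0.18}), apply Okumura's inequality to $\mathrm{tr}(T^3)$, insert the bounds (\ref{0.19}) and (\ref{0.21}), and invoke the weak maximum principle for $\Delta_f$ (valid because $Ric_f=\lambda\langle\,,\rangle\geq\lambda_*\langle\,,\rangle$). The dichotomy $|T|^*=0$ versus the gap is then read off exactly as you describe.

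The only discrepancy is in the coefficients you conjecture for the Bochner identity. The paper's exact formula (equation (\ref{2.25}), derived from the $\Delta_f R_{ik}$ identity of Lemma~\ref{lemma_2.2} by setting $W=0$) is
\[
\tfrac12\Delta_f|T|^2=|\nabla T|^2+2\Big(\lambda-\tfrac{m-2}{m(m-1)}S\Big)|T|^2+(m-2)\langle\mathrm{Hess}(\lambda),T\rangle+\tfrac{4}{m-2}\mathrm{tr}(T^3),
\]
so your coefficients $m\lambda$, $\tfrac{m-2}{m-1}S$, $\tfrac{2m}{m-2}$ are each off by a factor $m/2$ relative to the correct $2\lambda$, $\tfrac{2(m-2)}{m(m-1)}S$, $\tfrac{4}{m-2}$; the constant in front of $\langle\mathrm{Hess}(\lambda),T\rangle$ is $c_m=m-2$. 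This is precisely the computation you flagged as the main obstacle, and since the same scaling appears in all three $T$-terms while $|\nabla T|^2\geq0$ is simply discarded, your final gap inequality still comes out correctly; but the intermediate identity as you wrote it is not right.
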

Our results will follow from considering elliptic equations or inequalities for various geometric quantities on almost solitons and rely on analytic techniques. This is the same philosophy used, for instance in \cite{ELNM}, \cite{PW1}. More specifically, we will see that the differential (in)equalities at hand naturally involve the $f$-Laplace operator. Since the almost soliton equation means precisely that the $f$-Bakry-Emery Ricci curvature is proportional to the metric tensor, we are naturally led to introduce a number of weighted manifolds tools whose range of applications go beyond the investigation of almost solitons. This point of view is in the spirit of \cite{WW}. An important instance of these tools is represented by the maximum principles at infinity under both weighted Ricci lower bounds and  weighted volume growth conditions. By way of example, we shall observe the validity of the following

\begin{theorem}\label{OY}
Let $(M,\left\langle ,\right\rangle,e^{-f}d\rm{vol})$ be a complete weighted manifolds whose Bakry-Emery Ricci tensor satisfies

$$Ric_f \geq -(m-1)G(r(x))$$

where $G$ is a smooth function on $\left[0,+\infty\right)$ satisfying
\begin{equation*}
\begin{array}{lll}
&\left(i\right)\,G\left(0\right)>0&\left(ii\right)\,G^{\prime}\left(t\right)\geq 0 \textrm{\,\,on\,\,} \left[0,+\infty\right)\\
&\left(iii\right)G\left(t\right)^{-\frac{1}{2}}\notin L^{1}\left(+\infty\right)&\left(iv\right)\, \limsup_{t\rightarrow+\infty}\frac{tG\left(t^{\frac{1}{2}}\right)}{G\left(t\right)}<+\infty.
\end{array}
\end{equation*}
Assume also that
\begin{equation*}
\left|\nabla f\right|\leq CG\left(r\right)^{1/2}.
\end{equation*}
Then, for every smooth function $u$ such that $\sup_M u = u^{*} <+\infty$ there exists a sequence $\{x_n\}$ along which
\begin{equation*}
\begin{array}{llll}
&\left(i\right)\,u\left(x_{k}\right)>u^{*}-\frac{1}{k};&\left(ii\right)\,\left|\nabla u\left(x_{k}\right)\right|<\frac{1}{k};&\left(iii\right)\,\Delta_{f}u\left(x_{k}\right)<\frac{1}{k}.
\end{array}
\end{equation*}

\end{theorem}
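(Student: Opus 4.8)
The plan is to recognize Theorem \ref{OY} as an instance of the Omori--Yau maximum principle for the diffusion operator $\Delta_f$, and to deduce it from the general Khas'minskii-type criterion of Pigola--Rigoli--Setti: it suffices to produce a $C^2$ exhaustion $\gamma\geq 0$ with $\gamma(x)\to+\infty$ as $x\to\infty$ such that, off a compact set,
$$|\nabla\gamma|\leq G(\gamma)^{1/2},\qquad \Delta_f\gamma\leq C\,G(\gamma)^{1/2}$$
for some constant $C>0$, where $G$ satisfies $(i)$--$(iv)$. Once such a $\gamma$ is at hand, the three conclusions follow from the standard perturbation argument: given $u$ with $u^*<+\infty$, one maximizes $u-\varepsilon\,\xi(\gamma)$ for a suitable increasing function $\xi$ and lets $\varepsilon\downarrow 0$, reading off the near-maximal value, the smallness of the gradient, and the sign of $\Delta_f$ at the (near-)maximum points. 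I would only sketch this extraction and concentrate on the construction of $\gamma$.

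To build $\gamma$ I would take $\gamma=\varphi\circ r$ for a radial profile $\varphi$ chosen increasing and unbounded, so that the gradient condition becomes $\varphi'(r)\leq G(\varphi(r))^{1/2}$ and
$$\Delta_f\gamma=\varphi''(r)+\varphi'(r)\,\Delta_f r.$$
The decisive ingredient is therefore an upper comparison for $\Delta_f r$. Applying the weighted Bochner formula to the distance function and using $|\nabla r|\equiv 1$ gives, along a minimizing geodesic,
$$0=|\mathrm{Hess}\,r|^2+(\Delta_f r)'+Ric_f(\nabla r,\nabla r).$$
Bounding $|\mathrm{Hess}\,r|^2\geq (m-1)^{-1}(\Delta r)^2$ with $\Delta r=\Delta_f r+\partial_r f$, inserting the hypothesis $Ric_f\geq -(m-1)G(r)$, and absorbing the cross term through the gradient bound $|\partial_r f|\leq|\nabla f|\leq C\,G(r)^{1/2}$, I obtain a Riccati differential inequality for $\phi=\Delta_f r$ of the form $\phi'\leq -c\,\phi^2+C'\,G(r)$. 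Comparing $\phi$ with the logarithmic derivative of the solution $h$ of $h''=Gh$, $h(0)=0$, $h'(0)=1$ --- whose monotonicity and non-integrability are guaranteed precisely by $(i)$, $(ii)$, $(iii)$ --- yields $\Delta_f r\leq \mathrm{const}\cdot G(r)^{1/2}$ for $r$ large. As usual this must first be established in the barrier (Calabi) sense to bypass the cut locus.

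With this comparison, choosing $\varphi$ so that $\varphi'(r)=G(\varphi(r))^{1/2}$ makes the gradient bound hold and, via the Riccati estimate, forces $\Delta_f\gamma\leq C\,G(\gamma)^{1/2}$ off a compact set; condition $(iv)$ is exactly what guarantees that the second order term $\varphi''$ does not destroy this estimate and that $\varphi$ indeed diverges, so that $\gamma$ is a genuine exhaustion. I expect the main obstacle to be this simultaneous control: reconciling the growth of $\varphi$ forced by $\varphi'=\sqrt{G\circ\varphi}$ with the upper bound on $\varphi''+\varphi'\,\Delta_f r$, which is where the rescaling hypothesis $(iv)$, linking $G(t)$ to $G(t^{1/2})$, plays its role; the remaining delicate point is the careful passage to the barrier sense in the Bochner--Riccati comparison near the cut locus.
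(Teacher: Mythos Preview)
Your overall strategy---reduce to an abstract Khas'minskii-type criterion for $\Delta_f$ and then verify it using an exhaustion built from the distance function via a weighted Laplacian comparison for $r$---is exactly the route the paper takes. The comparison $\Delta_f r\leq D\,G(r)^{1/2}$ is indeed the key estimate, and your weighted-Bochner derivation is equivalent to what the paper extracts from its earlier Theorem~3.1 (equation~(3.13)).

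Where you diverge is in the form of the abstract criterion and the choice of exhaustion. You quote the criterion in the form $|\nabla\gamma|\leq G(\gamma)^{1/2}$, $\Delta_f\gamma\leq C\,G(\gamma)^{1/2}$, which forces you to solve the ODE $\varphi'=\sqrt{G\circ\varphi}$ and then wrestle with condition~(iv) to control $\varphi''$. The paper instead states the abstract criterion (Theorem~\ref{th_OY_fLap}) with conditions $|\nabla\gamma|\leq A\gamma^{1/2}$, $\Delta_f\gamma\leq B\gamma^{1/2}G(\gamma^{1/2})^{1/2}$, together with $|\nabla f|\leq C\,G(\gamma^{1/2})^{1/2}$. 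These are calibrated so that the naive choice $\gamma=r^2$ works immediately: $|\nabla r^2|=2r=2\gamma^{1/2}$, and $\Delta_f r^2=2+2r\Delta_f r\leq C r\,G(r)^{1/2}=C\gamma^{1/2}G(\gamma^{1/2})^{1/2}$. Condition~(iv) is then used \emph{only inside} the proof of the abstract criterion, to control the auxiliary function $\varphi(t)=\exp\int_0^t G^{-1/2}$, rather than in the construction of the exhaustion. Your approach is correct but does more work than needed; the paper's reformulation of the criterion buys the simple exhaustion $\gamma=r^2$ and isolates the role of~(iv) cleanly.
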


The paper is organized as follows. In section 1 we provide some examples of almost gradient solitons and we prove the rigidity result contained in Theorem \ref{th_classification}. Section 2 is devoted to derive the basic elliptic equations that we shall use in the proofs of our results. Some of these equations, for $\lambda=const$, are well known, see for instance \cite{ELNM}, \cite{PW1}. Others seem to be new, even in this case (see for instance (\ref{2.25}) below). In section 3 we give some improved versions of Laplacian and volume comparison theorems obtained by Wei and Wylie, \cite{WW}, and we recall a version of a weak maximum principle, recently proved in \cite{MRS}, appropriate for our present purposes. We also prove a weighted version of a result in \cite{PRS-Memoirs} from which Theorem \ref{OY} above immediately follows. The proofs of the main geometric results are contained in sections 3, 4, 5, 6. We end with section 7 where we extend to almost solitons some topological results known in the classical case.

The authors are grateful to Manuel Fern\'andez-L\'opez for having sent them the preprints \cite{LR-Preprint} and \cite{FLGR-rigidity}.

\section{Examples and rigidity of Einstein almost solitons}\label{Ex_Einst}

Let $M=I\times_{g}\Sigma$ \ denote the $g$-warped product of the real interval
$I\subseteq\mathbb{R}$ with $0\in I,$ and the Riemannian manifold $\left(
\Sigma,\left( \, ,\right)  _{\Sigma}\right)  $ of dimension $\dim\Sigma=m$.
Namely, the $\left(  m+1\right)  $-dimensional, smooth product manifold
$I\times\Sigma$ is endowed with the metric%
\begin{equation*}
\left\langle \,,\right\rangle =dt\otimes dt+g\left(  t\right)
^{2}\left(\,,\right)  _{\Sigma},
%\label{warp1}%
\end{equation*}
where $t$ is a global parameter of $I$ and $g:I\rightarrow\mathbb{R}^+_0$
is a smooth function. Using the moving-frame formalism, the geometry of $M$
can be described as follows.

Fix the index convention $1\leq i,j,k,l,t...\leq m$ and $1\leq\alpha
,\beta,\gamma,...\leq m+1$. Let $\left\{  e_{j}\right\}  $ be a local
orthonormal frame of $\Sigma$ with dual frame $\left\{  \theta^{j}\right\}  $
so that $\left(  \,,\right)  _{\Sigma}=\sum\theta^{j}\otimes\theta^{j}.$ We
denote the corresponding connection $1$-forms by $\theta_{k}^{j}=-\theta
_{j}^{k}$ and the curvature $2$-forms by $\Theta_{j}^{i}=-\Theta_{i}^{j}$.
Accordingly, the structural equations of $\Sigma$ are
\begin{align*}
d\theta^{j} &  =-\theta_{k}^{j}\wedge\theta^{k}
%\label{warp2}
\\
d\theta_{i}^{j} &  =-\theta_{k}^{j}\wedge\theta_{i}^{k}+\Theta_{i}%
^{j}.\nonumber
\end{align*}
Furthermore, the curvature forms are related to the (components of) the
Riemann tensor by%
\begin{equation*}
\Theta_{i}^{j}=\frac{1}{2}\text{ }^{\Sigma}R_{ikl}^{j}\theta^{k}\wedge
\theta^{l}.
%\label{warp3}%
\end{equation*}
Let us introduce the local orthonormal coframe $\left\{  \varphi^{\alpha
}\right\}  $ on $M$ such that%
\begin{equation*}
\varphi^{j}=g\left(  t\right)  \theta^{j}\text{, }\varphi^{m+1}%
=dt.
%\label{warp5}%
\end{equation*}
The corresponding connection and curvature forms are denoted, respectively, by
$\varphi_{\beta}^{\alpha}=-\varphi_{\alpha}^{\beta}$ and $\Phi_{\beta}%
^{\alpha}=-\Phi_{\alpha}^{\beta}=\frac{1}{2}$ $^{M}R_{\beta\delta\gamma
}^{\alpha}\varphi^{\delta}\wedge\varphi^{\gamma}$. A repeated use of exterior
differentiations of $\varphi^{\alpha}$ and $\varphi_{\beta}^{\alpha}$ and of
the structure equations of $M$ and $\Sigma$, together with the well known characterization of the Levi Civita connection forms, yield%
\begin{align}
\varphi_{j}^{k} &  =\theta_{j}^{k}\label{warp6}\\
\varphi_{m+1}^{k} &  =\frac{g^{\prime}}{g}\varphi^{k}=-\varphi^{m+1}_{k},\nonumber
\end{align}
and consequently,
\begin{align}
\Phi_{j}^{k} &  =-\left(  \frac{g^{\prime}}{g}\right)  ^{2}\varphi^{k}%
\wedge\varphi^{j}+\Theta_{j}^{k}\nonumber\\
\Phi_{k}^{m+1} &  =\left\{  \left(  \frac{g^{\prime}}{g}\right)  ^{2}+\left(
\frac{g^{\prime}}{g}\right)  ^{\prime}\right\}  \varphi^{k}\wedge\varphi
^{m+1}=\frac{g^{\prime\prime}}{g}\varphi^{k}\wedge\varphi^{m+1}=-\Phi^{k}_{m-1}.\nonumber
\end{align}
Let $\left\{  E_{\alpha}\right\}  $ denote the dual frame of $\left\{
\varphi^{\alpha}\right\}  $ so that $E_{j}=g\left(  t\right)  ^{-1}e_{j}.$
Then,%
\[
^{M}Ric_{\alpha\beta}=\Phi_{\alpha}^{\gamma}\left(  E_{\gamma},E_{\beta
}\right)  \text{, and }^{\Sigma}Ric_{kt}=g^{2}\Theta_{k}^{j}\left(
E_{j},E_{t}\right)  .
\]
It follows from (\ref{warp6})  that%
\begin{align}
^{M}Ric_{kt} &  =\left\{  -\left(  m-1\right)  \left(  \frac{g^{\prime}}%
{g}\right)  ^{2}-\frac{g^{\prime\prime}}{g}\right\}  \delta_{kt}+\frac
{1}{g^{2}}\text{ }^{\Sigma}Ric_{kt}\label{warp7}\\
^{M}Ric_{m+1t} &  =0\nonumber\\
^{M}Ric_{m+1\text{ }m+1} &  =-m\frac{g^{\prime\prime}}{g}.\nonumber
\end{align}
In light of these relations, we have that $M$ is Einstein with $^{M}%
Ric=-mc\left\langle ,\right\rangle $ if and only if%
\begin{equation*}
^{\Sigma}Ric_{kt}=\left\{  \left(  m-1\right)  \left(  \frac{g^{\prime}}%
{g}\right)  ^{2}+\frac{g^{\prime\prime}}{g}-mc\right\}  g^{2}\delta
_{kt},
%\label{warp7a}%
\end{equation*}
and%
\begin{equation}
g^{\prime\prime}=cg.\label{warp7b}%
\end{equation}
Therefore%
\begin{equation}
^{\Sigma}Ric_{kt}=-\left(  m-1\right)  \left(  -g^{\prime}{}^{2}%
+cg^{2}\right)  \delta_{kt}.\label{warp7c}%
\end{equation}
We explicitly note that the general solution of (\ref{warp7b}) is given by%
\begin{equation}
g\left(  t\right)  =g^{\prime}\left(  0\right)  \mathrm{sn}_{-c}\left(
t\right)  +g\left(  0\right)  \mathrm{cn}_{-c}\left(  t\right)
,\label{warp7d}%
\end{equation}
where%
\begin{equation*}
\mathrm{sn}_{k}\left(  t\right)  =\left\{
\begin{array}
[c]{lll}%
\frac{1}{\sqrt{-k}}\sinh\left(  \sqrt{-k}t\right)&\textrm{if}& k<0\\
t&\textrm{if}& k=0\\
\frac{1}{\sqrt{k}}\sin\left(  \sqrt{k}t\right)&\textrm{if}& k>0.
\end{array}
\right.
%\label{warp7e}%
\end{equation*}
and%
\begin{equation*}
\mathrm{cn}_{k}\left(  t\right)  ={\mathrm{sn}}^{\prime}_{k}\left(
t\right)  .
%\label{warp7f}
\end{equation*}
Inserting (\ref{warp7d}) into (\ref{warp7c})  we obtain the following

\begin{lemma}
\label{lemma_es}
Let $\left(  \Sigma,\left( \, ,\right)  _{\Sigma}\right)  $ be a Riemannian
manifold of dimension $m$. Consider the warped product $M=I\times_{g}\Sigma$
where $0\in I\subseteq\mathbb{R}$ and $g:I\rightarrow\mathbb{R}^{+}$ is
a smooth function. Then, $M$ is Einstein with%
\[
^{M}Ric=-m  c\left\langle\,  ,\right\rangle,\,\,\,\,c\in\mathbb{R},%
\]
if and only
\begin{equation}
g\left(  t\right)  =g^{\prime}\left(  0\right)  \mathrm{sn}_{-c}\left(
t\right)  +g\left(  0\right)  \mathrm{cn}_{-c}\left(  t\right)
\label{warp7l}%
\end{equation}
and $\Sigma$ is Einstein with%
\begin{equation}
^{\Sigma}Ric=-\left(  m-1\right)  \left\{  -g^{\prime}\left(  0\right)
^{2}+cg\left(  0\right)  ^{2}\right\}  \left( \, ,\right)  _{\Sigma
}.\label{warp7i}%
\end{equation}
\end{lemma}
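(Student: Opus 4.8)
The plan is to read off everything from the moving-frame computation already recorded in (\ref{warp7}), which reduces the Einstein condition on $M$ to one ordinary differential equation for $g$ together with a curvature condition on $\Sigma$, and then to exploit a first integral of that ODE. First I would note that ${}^{M}Ric=-mc\langle\,,\rangle$ is equivalent, in the frame $\{E_\alpha\}$, to ${}^{M}Ric_{\alpha\beta}=-mc\,\delta_{\alpha\beta}$, which by (\ref{warp7}) decouples into three blocks. The $(m+1,m+1)$ block reads $-mg''/g=-mc$, i.e. the ODE (\ref{warp7b}), $g''=cg$; the mixed block ${}^{M}Ric_{m+1\,t}=0$ is automatically consistent; and the tangential block, after substituting $g''=cg$, collapses to (\ref{warp7c}), namely ${}^{\Sigma}Ric_{kt}=-(m-1)(-(g')^2+cg^2)\delta_{kt}$.

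Next I would record two elementary facts about $g''=cg$. Since $\mathrm{sn}_{-c}$ and $\mathrm{cn}_{-c}$ are precisely the solutions normalized by $(\mathrm{sn}_{-c}(0),\mathrm{sn}_{-c}'(0))=(0,1)$ and $(\mathrm{cn}_{-c}(0),\mathrm{cn}_{-c}'(0))=(1,0)$, uniqueness for this linear ODE produces the solution with data $g(0),g'(0)$ in the form (\ref{warp7d}), which is (\ref{warp7l}). Moreover, along any solution the function $h(t)=-g'(t)^2+cg(t)^2$ is a first integral: $h'=-2g'g''+2cgg'=2g'(cg-g'')=0$, so $h\equiv h(0)=-g'(0)^2+cg(0)^2$. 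Feeding this constant into (\ref{warp7c}) turns its a priori $t$-dependent coefficient into the constant of (\ref{warp7i}); this proves the forward implication, that $M$ Einstein forces $g$ to have the form (\ref{warp7l}) and $\Sigma$ to be Einstein with ${}^{\Sigma}Ric=-(m-1)\{-g'(0)^2+cg(0)^2\}(\,,)_{\Sigma}$.

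For the converse I would run the same chain backward. If $g$ is given by (\ref{warp7l}), a direct differentiation shows $g''=cg$, so the first-integral identity again yields $-(g')^2+cg^2\equiv-g'(0)^2+cg(0)^2$; if in addition $\Sigma$ is Einstein as in (\ref{warp7i}), then the tangential block of (\ref{warp7}) reproduces (\ref{warp7c}), the $(m+1,m+1)$ block equals $-mc$ because $g''=cg$, and the mixed block vanishes identically. Hence every component of ${}^{M}Ric+mc\langle\,,\rangle$ is zero and $M$ is Einstein, as required.

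I do not expect a genuine obstacle, since the heavy curvature bookkeeping is already contained in (\ref{warp7}). The one point worth isolating is the first-integral observation $\tfrac{d}{dt}(-(g')^2+cg^2)=0$: this is exactly what lets me replace the a priori $t$-dependent factor in (\ref{warp7c}) by its value at $t=0$, and hence conclude that $\Sigma$ is genuinely Einstein with constant Ricci curvature rather than merely having Ricci proportional to its metric by a factor that could vary along $I$.
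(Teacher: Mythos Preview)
Your proposal is correct and follows essentially the same route as the paper: both read the Einstein condition off the Ricci formulas (\ref{warp7}), reduce it to the ODE $g''=cg$ together with (\ref{warp7c}), and then identify the solution (\ref{warp7d}). Your first-integral remark that $-(g')^2+cg^2$ is constant along solutions is exactly the content of the paper's ``inserting (\ref{warp7d}) into (\ref{warp7c})'', expressed slightly more cleanly; and you spell out the converse direction, which the paper leaves implicit.
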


Now, consider a smooth function $f:M\rightarrow\mathbb{R}$ of the form
$f\left(  t,x\right)  =f\left(  t\right)  $. Its Hessian expresses as%
\begin{equation}
\mathrm{Hess}\left(  f\right)  =f^{\prime}\frac{g^{\prime}}{g}\sum\varphi
^{k}\otimes\varphi^{k}+f^{\prime\prime}\varphi^{m+1}\otimes\varphi^{m+1}.
\label{warp8}%
\end{equation}
Thus, in case $M$ is an Einstein manifold with $^{M}Ric=-mc\left\langle\, ,\right\rangle$
(hence $\Sigma$ is so), the almost Ricci
soliton equation on $M$ with respect to the potential $f\left(  x,t\right)
=f\left(  t\right)  $ reads%
\begin{equation*}
\left\{
\begin{array}
[c]{l}%
f^{\prime}\frac{g^{\prime}}{g}-mc=\lambda\\
f^{\prime\prime}-mc=\lambda.
\end{array}
\right.
%\label{warp9}%
\end{equation*}
Integrating this latter we deduce%
\begin{equation}
\left\{
\begin{array}
[c]{l}%
f\left(  t\right)  =a\int_{0}^{t}g\left(  s\right)  ds+b\\
\lambda\left(  t\right)  =ag^{\prime}\left(  t\right)  -mc,
\end{array}
\right.  \label{warp10}%
\end{equation}
for some constants \thinspace$a,b\in\mathbb{R}$. Summarizing, we have obtained
the following examples of Einstein, almost Ricci solitons.

\begin{proposition}\label{es2}
Let $g\left(  t\right)  :I\rightarrow\mathbb{R}^{+}$ be the  smooth
function defined  in (\ref{warp7l}), $0\in I\subseteq\mathbb{R}$. Let $\left(
\Sigma,\left( \, ,\right)  _{\Sigma}\right)  $ be an $m$-dimensional Einstein
manifold satisfying (\ref{warp7i}). Then, the warped product $M=I\times
_{g}\Sigma$ is Einstein with $^{M}Ric=-mc\left\langle\, ,\right\rangle $ and it
is an almost Ricci soliton with potential $f\left(  t\right)  $ and
soliton function $\lambda\left(  t\right)  $ defined in (\ref{warp10}).
\end{proposition}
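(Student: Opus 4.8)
The plan is to assemble directly the computations already carried out in the discussion preceding the statement; no genuinely new work is required, since the proposition merely records the outcome of the moving-frame analysis above.

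First I would invoke Lemma \ref{lemma_es}. Since $g$ is precisely the function (\ref{warp7l}) and $\left(\Sigma,\left(\,,\right)_\Sigma\right)$ is Einstein satisfying (\ref{warp7i}), the lemma yields at once that $M=I\times_g\Sigma$ is Einstein with $^M Ric=-mc\left\langle\,,\right\rangle$. This settles the first assertion, and all the curvature identities (\ref{warp7}) are already available for the second part.

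Next I would produce the almost soliton structure. I choose the radial potential $f\left(t,x\right)=f\left(t\right)$ with $f\left(t\right)=a\int_0^t g\left(s\right)\,ds+b$, so that $f^\prime=ag$ and $f^{\prime\prime}=ag^\prime$. Substituting $f^\prime=ag$ into the Hessian formula (\ref{warp8}), the coefficient of $\sum\varphi^k\otimes\varphi^k$ becomes $f^\prime\frac{g^\prime}{g}=ag^\prime$, which coincides with the coefficient $f^{\prime\prime}=ag^\prime$ of $\varphi^{m+1}\otimes\varphi^{m+1}$. Hence
\begin{equation*}
\mathrm{Hess}\left(f\right)=ag^\prime\left\langle\,,\right\rangle,
\end{equation*}
so that the Hessian is pointwise proportional to the metric.

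Finally I would feed this into the almost soliton equation (\ref{0.2}). Combining $^M Ric=-mc\left\langle\,,\right\rangle$ with the displayed Hessian gives $^M Ric+\mathrm{Hess}\left(f\right)=\left(ag^\prime-mc\right)\left\langle\,,\right\rangle$, so that (\ref{0.2}) holds with $\lambda\left(t\right)=ag^\prime\left(t\right)-mc$, exactly reproducing the potential and soliton function recorded in (\ref{warp10}). The only point worth isolating is the coincidence of the two Hessian coefficients under the choice $f^\prime=ag$: this is precisely what forces $\mathrm{Hess}\left(f\right)$ to be a scalar multiple of $\left\langle\,,\right\rangle$ and hence makes $\lambda$ depend on $t$ alone. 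Since the curvature computation (\ref{warp7}) and the Hessian computation (\ref{warp8}) were already established, there is no substantial obstacle; the proof is purely a matter of verification.
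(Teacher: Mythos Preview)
Your proposal is correct and matches the paper's approach: the proposition is explicitly a summary of the preceding moving-frame computations, and the paper derives (\ref{warp10}) by writing the soliton equation as the two-equation system $f^{\prime}g^{\prime}/g-mc=\lambda$, $f^{\prime\prime}-mc=\lambda$ and integrating, which is equivalent to your direct verification that $f^{\prime}=ag$ makes both Hessian coefficients equal to $ag^{\prime}$. The only cosmetic difference is that the paper solves for $f,\lambda$ while you plug in the given $f,\lambda$ and check; the content is identical.
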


The next rigidity theorem, in the complete case, shows that basically there are no
further examples when $\left(M,\left\langle\, ,\right\rangle\right)$ is Einstein.

\begin{theorem}
\label{th_classification}Let $\left(  M,\left\langle\,  ,\right\rangle \right)  $
be a complete, connected, Einstein manifold of dimension $m\geq3$ and%
\[
^{M}Ric=-\left(  m-1\right)  c\left\langle\,  ,\right\rangle,\,\,\,\,c\in\mathbb{R}\text{.}%
\]
Assume that $M$ is an almost Ricci soliton, namely, for some
$\lambda\in C^{\infty}\left(  M\right)  $, \ there is a solution
$f\in C^{\infty}\left(  M\right)  $ of the equation%
\begin{equation*}
^{M}Ric+\mathrm{Hess}\left(  f\right)  =\lambda\left(  x\right)
\left\langle\, ,\right\rangle .
%\label{1}%
\end{equation*}

\noindent
{\rm\bf (a)} If  $c=0$, then $\lambda$ must be constant and the following
possibilities occur:

\begin{description}
\item[(a.1)] If $\lambda=0$ then $M$ is isometric to a cylinder $\mathbb{R}%
\times\Sigma$ over a totally geodesic, Ricci flat hypersurface $\Sigma\subset
M$. Furthermore, $f\left(  t,x\right)  =at+b$, for some constants
$a,b\in\mathbb{R}$.
\item[(a.2)] If $\lambda=\mathrm{const.}\neq0$ then $M$ is isometric to
$\mathbb{R}^{m}$ and%
\begin{equation}
f\left(  x\right)  =\frac{\lambda}{2}\left\vert x\right\vert ^{2}+\left\langle
b,x\right\rangle +c, \label{2}%
\end{equation}
for some $b\in\mathbb{R}^{m}$ and $c\in\mathbb{R}$.
\end{description}

\noindent
{\rm\bf(b)} If $c\neq0$, then either $\lambda$ is constant and the soliton is trivial, or one of the following cases occurs:
\begin{description}
\item[(b.1)] $c\in\mathbb{R}\backslash\left\{  0\right\}  $ and $M$ is a
space-form of constant curvature $-c$. Furthermore%
\begin{equation}
\left\{
\begin{array}
[c]{l}%
\lambda\left(  x\right)  =a\mathrm{cn}_{-c}\left(  r\left(  x\right)  \right)
-\left(  m-1\right)  c\\
f\left(  x\right)  =c^{-1}a\mathrm{cn}_{-c}\left(  r\left(  x\right)  \right)
+b,
\end{array}
\right.  \label{3}%
\end{equation}
for some constants $a,b\in\mathbb{R}$. Here, $r\left(  x\right)  $
denotes the distance from a fixed origin.

\item[(b.2)] $c>0$ and $M$ is isometric to the warped product $\mathbb{R}%
\times_{g}\Sigma$ where%
\[
g\left(  t\right)  =\frac{g^{\prime}\left(  0\right)  }{\sqrt{c}}\sinh\left(
\sqrt{c}t\right)  +g\left(  0\right)  \cosh\left(  \sqrt{c}t\right)  >0,
\]
and $\Sigma\subset M$ is an Einstein hypersurface with
$$^{\Sigma}Ric=-\left( m-2\right) (-g'(0)^2 + c g(o)^2).$$
Furthermore%
\begin{equation}
\left\{
\begin{array}
[c]{l}%
\lambda\left(  t,x\right)  =ag^{\prime}\left(  t\right)  -mc\\
f\left(  t,x\right)  =a\int_{0}^{t}g\left(  s\right)  ds+b,
\end{array}
\right.  \label{3a}%
\end{equation}
for some constants $a,b\in\mathbb{R}$. Here, $t$ is a global
coordinate on $\mathbb{R}$.
\end{description}
\end{theorem}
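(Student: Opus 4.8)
The plan is to use the Einstein hypothesis to collapse the almost soliton equation into a single scalar equation for $f$. Writing $\mathrm{Ric}=-(m-1)c\langle\,,\rangle$ and setting $\psi:=\lambda+(m-1)c$, the equation $\mathrm{Ric}+\mathrm{Hess}(f)=\lambda\langle\,,\rangle$ becomes $\mathrm{Hess}(f)=\psi\,\langle\,,\rangle$, so that $\nabla f$ is a concircular field. The engine of the whole argument is the identity $\nabla\psi=c\,\nabla f$. To obtain it I would differentiate $f_{ij}=\psi\,\delta_{ij}$ in an orthonormal coframe to get $f_{ijk}=\psi_k\,\delta_{ij}$, commute the last two derivatives via the Ricci identity, and trace, using $\mathrm{Ric}=-(m-1)c\langle\,,\rangle$; the antisymmetric part $\psi_k\delta_{ij}-\psi_j\delta_{ik}$ is matched against a single Ricci contraction of $\nabla f$, and the trace collapses to $(m-1)\psi_k=(m-1)c\,f_k$. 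Integrating on the connected manifold gives $\psi=cf+\beta$ for a constant $\beta$; in particular, when $c=0$ this forces $\psi$, hence $\lambda$, to be constant, which is already the first assertion of part (a).

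If $f$ is constant the soliton is trivial, so I would assume $f$ non-constant and extract a warped-product structure on the open set $\{\nabla f\neq0\}$. From $\nabla|\nabla f|^2=2\,\mathrm{Hess}(f)(\nabla f)=2\psi\nabla f=\nabla(cf^2+2\beta f)$ one gets the first integral $|\nabla f|^2=cf^2+2\beta f+\gamma=:P(f)$ on all of $M$. The integral curves of $\nabla f/|\nabla f|$ are unit-speed geodesics along which, writing $f=f(t)$, one has $f'(t)=|\nabla f|$ and $f''(t)=\langle\nabla|\nabla f|,\nabla f/|\nabla f|\rangle=\psi$; moreover each regular level set is totally umbilic with umbilicity factor $\psi/|\nabla f|=f''/f'$. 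Taking the signed distance $t$ to a fixed regular level set as coordinate, the metric takes the form $dt^2+h(t)^2(\,,\,)_\Sigma$ with $h'/h=f''/f'$, i.e. $h\propto f'=|\nabla f|$, and $f''=cf+\beta$, which is exactly (\ref{warp7b}) up to the inhomogeneous constant $\beta$. The Einstein condition on $M$ then forces, through the curvature computation behind Lemma \ref{lemma_es}, the fiber $\Sigma$ to be Einstein with the constant recorded in the statement.

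It remains to globalize, and here completeness enters through the critical set of $f$. If $\nabla f$ never vanishes, then $f'=|\nabla f|$ keeps a fixed sign, $t$ ranges over all of $\mathbb{R}$, and positivity of the warp $h\propto f'$ on the whole line selects $c>0$; integrating $f''=cf+\beta$ and $f'=h$ with $\mathrm{sn}_{-c},\mathrm{cn}_{-c}$ reproduces (\ref{warp10})--(\ref{3a}), giving case (b.2). If instead $f$ has an interior critical point $x_0$, concircularity makes the regular level sets near $x_0$ geodesic spheres, so $M$ is rotationally symmetric about $x_0$; being Einstein, it is then a space form of curvature $-c$ (case (b.1), with $f$ and $\lambda$ as in (\ref{3})), degenerating to $\mathbb{R}^m$ with the quadratic potential (\ref{2}) when $c=0$ (case (a.2)). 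Finally, the leftover case $c=0$, $\psi\equiv0$ gives $\mathrm{Hess}(f)=0$, a parallel $\nabla f$, and the de Rham splitting $M=\mathbb{R}\times\Sigma$ over a totally geodesic, Ricci-flat fiber, with $f=at+b$, which is case (a.1).

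The local computations above are routine; I expect the main obstacle to be the global step. Turning the local warped product into one of the three rigid models requires a Tashiro-type analysis, and the delicate points are the smoothness of the metric as a regular level set collapses onto a critical point, which is what forces the fiber to be a round sphere and hence yields the space-form conclusion, together with the verification that the warp function stays positive on all of $\mathbb{R}$ in the fixed-point-free case, which is what pins the sign of $c$ in (b.2).
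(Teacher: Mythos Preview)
Your proposal is correct and follows essentially the same route as the paper. Both arguments hinge on the identity $\nabla\lambda=c\,\nabla f$ (your $\nabla\psi=c\,\nabla f$), obtained by differentiating the Hessian equation and tracing against the Einstein condition, and then split the analysis according to whether the resulting concircular field has a critical point.

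The packaging differs slightly. The paper shifts to the function $v=(m-1)c+\lambda$ (your $\psi$), which solves the clean equation $\mathrm{Hess}(v)=cv\langle\,,\rangle$, and then dispatches the critical-point cases by directly invoking Obata's theorem ($c<0$) and Kanai's version ($c>0$), while constructing the warped product in (b.2) by hand via the flow of $\nabla v/|\nabla v|$. You instead stay with $f$, extract the first integral $|\nabla f|^2=P(f)$ (which the paper does not write down), and appeal to the Tashiro framework to globalize. Your observation that ``rotationally symmetric $+$ Einstein $\Rightarrow$ space form'' is a valid shortcut for the critical-point case, though the paper's citation of Obata--Kanai is the cleaner way to close that step without redoing the smoothness analysis at the pole. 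Both approaches buy the same conclusions; yours makes the ODE structure more visible, while the paper's is more self-contained in case (b.2).
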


\begin{proof}
By assumption, with respect to a local orthonormal coframe, we have%
\begin{equation}
f_{ij}=\left(  \left(  m-1\right)  c+\lambda\right)  \delta_{ij}. \label{4}%
\end{equation}
Differentiating both sides and using the commutation rule%
\begin{equation}
f_{ijk}-f_{ikj}=R_{lijk}f_{l}, \label{5}%
\end{equation}
we deduce%
\begin{equation*}
R_{lijk}f_{l}=\lambda_{k}\delta_{ij}-\lambda_{j}\delta_{ik}.
%\label{6}%
\end{equation*}
Tracing this latter with respect to $i$ and $k$, recalling that $R_{ij}%
=-\left(  m-1\right)  c\delta_{ij},$ and simplifying we conclude that
\begin{equation}
cf_{j}=\lambda_{j}. \label{7}%
\end{equation}
We now distinguish several cases.\medskip

\noindent
\textbf{(a)} Suppose $c=0$, i.e., $M$ is Ricci flat. Then $\lambda_{j}=0$
proving that $\lambda$ is constant. The soliton equation reads%
\begin{equation*}
\mathrm{Hess}\left(  f\right)  =\lambda\left\langle ,\right\rangle .
%\label{8}%
\end{equation*}

\noindent
\textbf{(a.1)} In case $\lambda=0$, then $f$ is affine. In particular
$\left\vert \nabla f\right\vert $ is constant proving that either $f$ is
constant, and the soliton is trivial, or $f$ has no critical point at all.
Suppose this latter case occurs. Up to rescaling $f$ we can assume that
$\left\vert \nabla f\right\vert =1,$ i.e., $f$ is a function of distance type.
Then, a Cheeger-Gromoll type argument (see (b.2.ii$_{1}$) below for details)
shows that the flow $\phi$ of the vector field $X=\nabla f$ establishes a
Riemannian isometry $\phi:\mathbb{R}\times\Sigma\rightarrow M$, where $\Sigma$
is any of the (totally geodesic) level sets of $f$ and $f$ is a linear
function of $t$. Finally, since $M$ is Ricci flat then also $\Sigma$ must be
Ricci flat. This proves the first part of statement (a) of the Theorem.

\noindent
\textbf{(a.2)} Assume $\lambda\neq0$. Then, it is known that $M$ is isometric
to $\mathbb{R}^{m}$ and $f\left(  x\right)  $ takes the form given in
(\ref{2}). See \cite{T} and the Appendix in \cite{PRiS} for a straightforward proof. The proof of
case (a) is completed.\medskip

\noindent
\textbf{(b)} Suppose $c\neq0$. By (\ref{7}) we have%
\begin{equation}
f=c^{-1}\lambda+d, \label{9}%
\end{equation}
for some constant $d\in\mathbb{R}$. Inserting into (\ref{4}) gives%
\begin{equation*}
\mathrm{Hess}\left(  \lambda\right)  =c\left\{  \left(  m-1\right)
c+\lambda\right\}  \left\langle \, ,\right\rangle .
%\label{10}%
\end{equation*}

If $\lambda\left(  x\right)  =\mathrm{const.}$, i.e. $M$
is a classical Ricci soliton, then, in view of (\ref{9}), $f$ must be constant
and the soliton is trivial.

Assume then that $\lambda\left(  x\right)  $ is nonconstant. Note that the
function%
\begin{equation}
v\left(  x\right)  =\left(  m-1\right)  c+\lambda\left(  x\right)  \label{11}%
\end{equation}
is a nontrivial solution of%
\begin{equation}
\mathrm{Hess}\left(  v\right)  =cv\left\langle ,\right\rangle . \label{12}%
\end{equation}

\textbf{(b.1.i)} If  $c<0$, then by the classical Obata theorem,
\cite{O}, $M$ is isometric to a spaceform of constant curvature $-c>0$ and%
\begin{equation*}
v\left(  x\right)  =a\cos\left(  \sqrt{-c}r\left(  x\right)  \right)  ,
%\label{13}%
\end{equation*}
for some constant $a\neq0$. Here, $r\left(  x\right)  $ denotes the distance
function from a fixed origin. It follows that the functions $\lambda\left(
x\right)  $ and $f\left(  x\right)  $ take the form given in \textbf{(b.1)}, (\ref{3}),
 for $c<0$.

It remains to consider the case $c>0$. Two possibilities can occur:

\noindent
\textbf{(b.1.ii)}  The function $v,$ which is a nontrivial
solution of (\ref{12}),  $v$ has  at least one critical point $o\in M$ and, therefore, it is a nontrivial solution of the
problem%
\[
\left\{
\begin{array}
[c]{l}%
\mathrm{Hess}\left(  v\right)  =cv\left\langle ,\right\rangle \\
\left\vert \nabla v\right\vert \left(  o\right)  =0,
\end{array}
\right.
\]
with $c>0$. Thus, for every unit speed geodesic $\gamma$ issuing from $o$, the function
$y=v\circ \gamma$ satisfies the initial value problem
\begin{equation*}
\begin{cases}
y'' = cy&\\
y(0) = v(o),\, y'(0) = \langle \nabla v (o), \dot{\gamma}(0)\rangle,&
\end{cases}
\end{equation*}
and since $v$ is nonconstant, we have must have $v\left(  o\right)  \neq0$.
Using Kanai's version of Obata theorem, \cite{K}, we conclude that $M$ is
isometric to hyperbolic space of constant curvature $-c<0$ and $v\left(
x\right)  =v\left(  o\right)  \cosh\left(  \sqrt{c}r\left(  x\right)  \right)
$ where $r\left(  x\right)  $ is the distance function from $o$. Inserting
this expression into (\ref{11}) and (\ref{9}) completes the proof of case \textbf{(b.1)}

\textbf{(b.2)} The function $v$ has no critical points. A classification of $M$ under
this assumption, and the corresponding form of $v,f,\lambda$, \ can be deduced from
some works by Ishihara and Tashiro, \cite{IT}, and Tashiro,
\cite{T}.
However we provide a concise and complete proof for the sake of completeness.
Let $\ \Sigma=\left\{  v\left(  x\right)  =s\right\}  $ be a non-empty,
smooth, level hypersurface.

Note that, up to multiplying $v$ by a non-zero constant, we can always assume
that either $s=0$ or $s=1$. A computation that uses (\ref{12}) shows that the
integral curves of the complete vector field $X=\nabla v/\left\vert \nabla
v\right\vert $ are unit speed geodesics orthogonal to $\Sigma$. Moreover, the
flow of $X$ gives rise to a smooth map $\phi:\mathbb{R}\times\Sigma\rightarrow
M$ which coincides with the normal exponential map $\exp^{\bot}$ of $\Sigma$.
In particular, $\phi$ is surjective. Evaluating (\ref{12}) along the integral
curve $\phi\left(  t,x\right)  $ issuing from $x\in\Sigma$ we deduce that
$y\left(  t\right)  =v\left(  \phi\left(  t,x\right)  \right)  $ satisfies%
\[
\left\{
\begin{array}
[c]{l}%
y^{\prime\prime}=cy\\
y\left(  0\right)  =s\in\left\{  0,1\right\}  \\
y^{\prime}\left(  0\right)  =\left\vert \nabla v\right\vert \left(  x\right)
\end{array}
\right.
\]
and therefore%
\begin{equation}
v\left(  \phi\left(  t,x\right)  \right)  =\left\vert \nabla v\right\vert
\left(  x\right)  \mathrm{sn}_{-c}\left(  t\right)  +s\mathrm{cn}_{-c}\left(
t\right)  .\label{14}%
\end{equation}
Since%
\begin{equation}
\frac{dv\left(  \phi\left(  t,x\right)  \right)  }{dt}=\left\vert \nabla
v\right\vert \circ\phi\left(  t,x\right)  >0\label{14a}%
\end{equation}
it follows from (\ref{14}) that, necessarily, $c>0$. Moreover, if $s=1$ we
have the further restriction $\left\vert \nabla v\right\vert \left(  x\right)
\geq\sqrt{c}$. The function $v$ is strictly increasing along the geodesic
curves $\phi_{x}\left(  t\right)  $ issuing from $x\in\Sigma$. Whence, it is
easy to conclude that $\phi$ is also injective, hence a diffeomorphism. Since
$M\approx\mathbb{R}\times\Sigma$ is connected, also $\Sigma$ must be
connected. As a consequence, $\left\vert \nabla v\right\vert $ is constant on
$\Sigma$. Indeed, for any smooth curve $\gamma\subset\Sigma$, we have
\begin{align*}
\frac{d}{dt}\left(  \left\vert \nabla v\right\vert \circ\gamma\right)   &
=\mathrm{Hess}\left(  v\right)  \left(  \frac{\nabla v}{\left\vert \nabla
v\right\vert }\circ\gamma,\dot{\gamma}\left(  t\right)  \right)
%\label{15}
\\
&  =cv\left(  \gamma\right)  \left\langle X_{\gamma},\dot{\gamma}\left(
t\right)  \right\rangle \nonumber\\
&  =0,\nonumber
\end{align*}
because $\dot{\gamma}\left(  t\right)  \in T\Sigma$ and $X_{\gamma}$ is
orthogonal to $\Sigma$. Therefore $\left\vert \nabla v\right\vert \left(
x\right)  =a\geq\sqrt{c}$, for every $x\in\Sigma$. Using this information into
(\ref{14}) with $c>0$ gives%
\begin{equation*}
v\left(  \phi\left(  t,x\right)  \right)  =\alpha\left(  t\right)
%\label{16}%
\end{equation*}
where we have set%
\[
\alpha\left(  t\right)  =\frac{a}{\sqrt{c}}\sinh\left(  \sqrt{c}t\right)
+s\cosh\left(  \sqrt{c}t\right)  .
\]
In particular, $\phi$ moves $\Sigma$ onto every other level set of $v$. To
conclude, we show that%
\begin{equation}
\phi^{\ast}\left\langle \,,\right\rangle =dt^{2}+\left(  \alpha^{\prime}\right)
^{2}\left(  t\right)  \left\langle\, ,\right\rangle _{\Sigma_{0}},\label{17}%
\end{equation}
where   $ \left\langle\, ,\right\rangle _{\Sigma}
=\left(  \phi_{0}\right)  ^{\ast }\left\langle\, ,\right\rangle $ denotes the metric induced by $M$ on the
smooth hypersurface $\Sigma$. Indeed, by the above reasonings (or applying Gauss
Lemma) we have%
\begin{equation*}
\phi^{\ast}\left\langle \,,\right\rangle =dt^{2}+\left(  \phi_{t}\right)
^{\ast}\left\langle\, ,\right\rangle .
%\label{18}%
\end{equation*}
Furthermore, using (\ref{12}), (\ref{14a}) and the definition of the Lie
derivative, we see that, on $T\Sigma_{\phi_{t}}=X_{\phi_{t}}^{\bot}$,%
\[
\frac{d}{dt}\left(  \phi_{t}\right)  ^{\ast}\left\langle \,,\right\rangle
=\frac{2\alpha^{\prime\prime}}{\alpha^{\prime}}\phi_{t}^{\ast}\left\langle\,
,\right\rangle .
\]
Whence, integrating on \thinspace$\lbrack0,t]$ we conclude the validity of
(\ref{17}). Summarizing, we have obtained that, if $v$ has no critical point,
then $\left(  M,\left\langle \,,\right\rangle \right)  $ is isometric to the
warped product manifold%
\begin{equation*}
\left(  \mathbb{R}\times\Sigma,dt^{2}+\alpha^{\prime}\left(  t\right)
^{2}\left\langle \,,\right\rangle _{\Sigma}\right)  ,
%\label{21}%
\end{equation*}
with $\Sigma$ a smooth hypersurface of $M$. By assumption, $M$ is Einstein
with constant Ricci curvature $-\left(  m-1\right)  c$, therefore $\Sigma$ is Einstein and the
expression of its Ricci curvature follows from Lemma~\ref{lemma_es}.

\textbf{(b.2.ii} To conclude, assume that $v$ possesses at least
one critical point $o\in M$ and, therefore, it is a nontrivial solution of the
problem%
\[
\left\{
\begin{array}
[c]{l}%
\mathrm{Hess}\left(  v\right)  =cv\left\langle\, ,\right\rangle \\
\left\vert \nabla v\right\vert \left(  o\right)  =0,
\end{array}
\right.
\]
with $c>0$. Since $v$ is nonconstant, we have $v\left(  o\right)  \neq0$.
Using Kanai version of Obata theorem, \cite{K}, we conclude that $M$ is
isometric to the hyperbolic space of constant curvature $-c<0$ and $v\left(
x\right)  =v\left(  o\right)  \cosh\left(  \sqrt{c}r\left(  x\right)  \right)
$ where $r\left(  x\right)  $ is the distance function from $o$. Inserting
this expression into (\ref{11}) and (\ref{9}) completes the proof of case (b)
and, hence, of the theorem.
\end{proof}

\begin{example} \label{not-almostsolitons}
\rm{
Let $M$ be any (possibly trivial) quotient of the Riemannian product of
standard spheres $\mathbb{S}^{2}\times \mathbb{S}^{2}$  or a non trivial quotient of
$\mathbb{S}^{m}$. Then, $M$ is Einstein, and according to Theorem \ref{th_classification},
({\bf b.1}) $M$ has no nontrivial almost
Ricci soliton structure.

A similar conclusion holds for possibly  trivial quotients of the Riemannian product of standard
hyperbolic spaces $\mathbb{H}^{2}\times\mathbb{H}^{2}$. Clearly it suffices to consider $\mathbb{H}^{2}\times\mathbb{H}^{2}$ itself. Since  $\mathbb{H}^{2}\times\mathbb{H}^{2}$
is Einstein with $\mathrm{Ric} = -\langle \,,\rangle$, if it had
the structure of a nontrivial almost soliton structure, by Theorem~\ref{th_classification} it would be isometric
to the warped product $\mathbb{R}\times_g\Sigma$ where $\Sigma$ is a 3 dimensional Einstein hypersurface
and $g$ has the form given in the statement of the Theorem. It follows that $\Sigma$ has constant negative curvature,
and, from the expression of the Riemann tensor of a warped product (see e.g., \cite{ONeill}),
$\mathbb{R}\times_g\Sigma \approx \mathbb{H}^{2}\times\mathbb{H}^{2}$ would have strictly negative sectional curvature, which is clearly impossible. Notice that the above reasoning shows that in case {\bf b.2} if $m=4$  and $M$ is simply connected  then $\Sigma$ is a  hyperbolic space.}
\end{example}

Now, suppose that we are given a warped product $M=I\times_{g}\Sigma$ where
$\left(\Sigma, (\,,)\right)$ is an $m$-dimensional Einstein manifold and $0\in I$. If $m\geq 3$, then, for some constant $a$,
\begin{equation*}
%\label{1.35}
^{\Sigma}Ric=-\left(m-1\right)a\left(\, ,\right)_{\Sigma}.
\end{equation*}
According to Lemma \ref{lemma_es} in order that $M$ be Einstein with $^{M}Ric=-mc\left\langle ,\right\rangle$
for some  $c\in \mathbb{R}$, then $g$ must be given by  (\ref{warp7l}) and then
$
cg\left(0\right)^2-g^{\prime}\left(0\right)^2=a.
$
Therefore if (\ref{warp7l}) is not satisfied, then
 $M$ is not Einstein. We consider a function
$f\left(x,t\right)=f\left(t\right)$, so that,  using (\ref{warp7}) and (\ref{warp8})
we see that to give $M=I\times_{g}\Sigma$ the structure of an almost soliton
we need to solve the system
\begin{equation}
\label{1.40}
\left\{
\begin{array}{l}
f^{\prime}\frac{g^{\prime}}{g}
=\lambda+\left(m-1\right)\left(\frac{g^{\prime}}{g}\right)^{2}+\frac{g^{\prime\prime}}{g}
+\frac{\left(m-1\right)a}{g^{2}}\\
f^{\prime\prime}=\lambda+m\frac{g^{\prime\prime}}{g}\\
\end{array}
\right.
\end{equation}
on $I$. Subtracting the first equation from the second we obtain
\begin{equation*}
%\label{1.41}
\left(\frac{f^{\prime}}{g}\right)^{\prime}=\left(m-1\right)\frac{gg^{\prime\prime}-\left(g^{\prime}\right)^{2}-a}{g^{3}}=\left(m-1\right)h\left(t\right)
\end{equation*}
on I,
%
%
%To fix ideas we suppose $I=\mathbb{R}^{+}_{0}$ and $g^{\left(2k\right)}\left(0\right)=0$ for all $k=0,1,\ldots$ %$g^{\prime}\left(0\right)>0$. Then
%\[
%\ h\left(t\right)\sim \frac{-a-\left(g^{\prime}\left(0\right)\right)^{2}+o\left(t^{3}\right)}{t^{3}}
%\]
%as $t\rightarrow 0^{+}$. Thus $h\left(t\right)\in L^{1}\left(\mathbb{R}_{0}^{+}\right)$ if and only if
%\begin{equation}\label{1.42}
%g^{\prime}\left(0\right)^{2}=-a.
%\end{equation}
%This forces $a<0$.\\
and integrating
\begin{equation}\label{1.43}
f\left(t\right)=B+\int^{t}_{0}g\left(s\right)\left[A+\left(m-1\right)
\int^{s}_{0}\frac{g^{\prime\prime}g-\left(g^{\prime}\right)^{2}-a}{g^{3}}dx\right]ds
\end{equation}
for some constants $A,B \in \mathbb R$. Going back to (\ref{1.40}) we then deduce
\begin{equation}\label{1.44}
\lambda\left(t\right)=-\left(m-1\right)\frac{\left(g^{\prime}\right)^{2}+a}{g^{2}}-\frac{g^{\prime\prime}}{g}+g^{\prime}\left[A+\left(m-1\right)\int^{t}_{0}\frac{g^{\prime\prime}g-\left(g^{\prime}\right)^{2}-a}{g^{3}}dx\right].
\end{equation}
Summarizing we have obtained the following new set of examples.
\begin{example}\rm{
Let $M=I\times_{g}\Sigma^{m}$ where $\Sigma^{m}$ is an Einstein manifold satisfying
$^{\Sigma}Ric=-\left(m-1\right)a$ with $a<0$. Then, $M$ supports an almost soliton
structure $f^{\prime}\frac{\partial}{\partial t}$ whith soliton
function $\lambda\left(t\right)$ where $f\left(t\right)$ and $\lambda\left(t\right)$
are defined respectively in (\ref{1.43}) and (\ref{1.44}).}
\end{example}
\begin{remark}
\rm{As observed above , if $g$ does not satisfy (\ref{warp7l}), these almost solitons are not Einstein hence necessarily different from those produced in Proposition \ref{es2} above. We also note that if $\Sigma$ is the
standard $(m-1)$ sphere, and $g$ is defined on $I=[-1,+\infty)$ satisfies $g^{(2k)}(-1) =0$, $g'(-1)=1$
then  we obtain a model manifold in the sense of Greene an Wu (with radial variable $r=t+1$), and the almost soliton structure, which is in general defined only on $(-1, + \infty)$ extends to $[-1, +\infty)$ provided the functions $f$ and $\lambda$ can be smoothly extended in $t=-1$.  We note that  expanding the function $h$ as $t\to -1+$ we obtain that
\[
\ h\left(t\right)\sim \frac{-a-1+o\left((t-1)^{3}\right)}{(t-1)^{3}}.
\]
 Thus $h$ integrable in a neighborhood of $t=-1$ and $f$ and $\lambda$ can be extended to $t=-1$ if and only if $a=-1$.
}
\end{remark}
\section{Some basic formulas}
The aim of this section is to prove some basic formulas for gradient Ricci almost solitons. Some of them are well known for solitons, but we have chosen to reproduce computations here since in our more general setting $\lambda$ is a function and significant extra terms appear along the way. Throughout this section computations are performed with the method of the moving frame in a local orthonormal coframe for the metric $\left\langle \,,\right\rangle$.
\begin{lemma}\label{lemma_2.1}
Let $\left(M,\left\langle \, ,\right\rangle,\nabla f\right)$ be a gradient Ricci almost soliton. Then
\begin{equation}\label{2.1}
\frac{1}{2}\Delta_{f}\left|\nabla f\right|^{2}=\left|Hess\left(f\right)\right|^{2}-\lambda\left|\nabla f\right|^{2}-\left(m-2\right)\left\langle \nabla \lambda,\nabla f\right\rangle.
\end{equation}
\end{lemma}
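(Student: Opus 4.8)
The plan is to combine the classical Bochner formula with two algebraic consequences of the almost-soliton equation, and then to exploit a cancellation of the Ricci terms. Working in a local orthonormal coframe as the paper does, I write (\ref{0.2}) as $R_{ij}+f_{ij}=\lambda\delta_{ij}$, where $f_{ij}$ are the components of $\mathrm{Hess}(f)$. Tracing gives $\Delta f=m\lambda-S$, hence $\nabla\Delta f=m\nabla\lambda-\nabla S$; and evaluating the equation on the pair $(\nabla f,\nabla f)$ gives the pointwise identity
\begin{equation*}
\mathrm{Hess}(f)(\nabla f,\nabla f)=f_{ij}f_if_j=\lambda|\nabla f|^2-\mathrm{Ric}(\nabla f,\nabla f).
\end{equation*}
Both facts are immediate and require no curvature identities.

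The key preliminary step, and the one where the non-constancy of $\lambda$ produces genuinely new terms, is a Bianchi-type relation for $\mathrm{Ric}(\nabla f)$. Differentiating $R_{ij}+f_{ij}=\lambda\delta_{ij}$ and tracing over $j$, I would use the once-contracted second Bianchi identity $\sum_j R_{ij,j}=\tfrac12 S_i$ together with the commutation rule $f_{ijk}-f_{ikj}=R_{lijk}f_l$ already recorded in (\ref{5}), which lets me rewrite $\sum_j f_{ijj}=(\Delta f)_i+R_{il}f_l$. This yields $\tfrac12 S_i+(\Delta f)_i+R_{il}f_l=\lambda_i$, and substituting $(\Delta f)_i=m\lambda_i-S_i$ gives $R_{il}f_l=\tfrac12 S_i+(1-m)\lambda_i$, that is,
\begin{equation*}
\langle\nabla S,\nabla f\rangle=2\,\mathrm{Ric}(\nabla f,\nabla f)+2(m-1)\langle\nabla\lambda,\nabla f\rangle.
\end{equation*}
For $\lambda\equiv\mathrm{const}$ this collapses to the familiar soliton identity $\tfrac12\nabla S=\mathrm{Ric}(\nabla f)$, and the $\nabla\lambda$ terms are exactly the new contributions; I expect fixing the coefficient $(m-1)$ here to be the only delicate bookkeeping in the argument.

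To finish I would invoke the Bochner formula $\tfrac12\Delta|\nabla f|^2=|\mathrm{Hess}(f)|^2+\langle\nabla\Delta f,\nabla f\rangle+\mathrm{Ric}(\nabla f,\nabla f)$ and use $\tfrac12\Delta_f|\nabla f|^2=\tfrac12\Delta|\nabla f|^2-\mathrm{Hess}(f)(\nabla f,\nabla f)$ with the identity from the first paragraph, obtaining
\begin{equation*}
\tfrac12\Delta_f|\nabla f|^2=|\mathrm{Hess}(f)|^2+\langle\nabla\Delta f,\nabla f\rangle+2\,\mathrm{Ric}(\nabla f,\nabla f)-\lambda|\nabla f|^2.
\end{equation*}
Replacing $\langle\nabla\Delta f,\nabla f\rangle=m\langle\nabla\lambda,\nabla f\rangle-\langle\nabla S,\nabla f\rangle$ and inserting the Bianchi relation, the $2\,\mathrm{Ric}(\nabla f,\nabla f)$ above is cancelled exactly by the $-2\,\mathrm{Ric}(\nabla f,\nabla f)$ coming from $\langle\nabla S,\nabla f\rangle$, while the gradient-of-$\lambda$ terms collect to $(m-2(m-1))\langle\nabla\lambda,\nabla f\rangle=(2-m)\langle\nabla\lambda,\nabla f\rangle$. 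This leaves precisely (\ref{2.1}). The one genuine subtlety is this Ricci cancellation: it is what produces the clean statement, and it also serves as a check that the coefficients in the Bianchi identity were computed correctly.
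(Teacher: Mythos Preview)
Your proof is correct and follows essentially the same route as the paper: both derive the contracted Bianchi identity $\langle\nabla S,\nabla f\rangle=2(m-1)\langle\nabla\lambda,\nabla f\rangle+2\,\mathrm{Ric}(\nabla f,\nabla f)$ (the paper's (2.8)), substitute it together with $\nabla\Delta f=m\nabla\lambda-\nabla S$ into the Bochner formula, and then use the soliton equation on $(\nabla f,\nabla f)$ to pass from $\Delta$ to $\Delta_f$. The only difference is the order of the last two steps---the paper first simplifies $\tfrac12\Delta|\nabla f|^2$ to $|\mathrm{Hess}(f)|^2-\mathrm{Ric}(\nabla f,\nabla f)-(m-2)\langle\nabla\lambda,\nabla f\rangle$ and then subtracts the drift term, whereas you subtract the drift term first and see a cancellation of $2\,\mathrm{Ric}(\nabla f,\nabla f)$---but the ingredients and the computation are the same.
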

\begin{proof}
We recall the defining equations
\begin{equation}\label{2.2}
R_{ij}=\lambda\delta_{ij}-f_{ij}.
\end{equation}
Taking covariant derivatives
\begin{equation}\label{2.3}
R_{ij,k}=\lambda_{k}\delta_{ij}-f_{ijk}.
\end{equation}
Tracing with respect to $j$ and $k$
\begin{equation}\label{2.4}
R_{ik,k}=\lambda_{i}-f_{ikk}.
\end{equation}
Next tracing the second Bianchi identities
\[
\ R_{ijkl,s}+R_{ijls,k}+R_{ijsk,l}=0
\]
with respect to $i$ and $s$ we have
\[
\ R_{ijkl,i}=R_{jl,k}-R_{jk,l}
\]
and tracing again with respect to $j$ and $l$
\begin{equation}\label{2.4.1}
2R_{ik,i}=S_{k}\,\,,
\end{equation}
where $S$ denotes the scalar curvature. Using the commutation relations
\[
\ R_{ij,k}=R_{ji,k}
\]
we then deduce
\begin{equation}\label{2.5}
R_{ki,i}=\frac{1}{2}S_{k}
\end{equation}
Using (\ref{5}) and (\ref{2.5}) into (\ref{2.4}) we finally obtain
\begin{equation}\label{2.6}
\frac{1}{2}S_{i}=\lambda_{i}-f_{kki}-f_{t}R_{ti}.
\end{equation}
Now, tracing (\ref{2.3}) with respect to $i$ and $j$ yields
\begin{equation}\label{2.6.1}
S_{i}=m\lambda_{i}-f_{kki}
\end{equation}
so that, substituting into (\ref{2.6}) gives
\begin{equation}\label{2.7}
S_{i}=2\left(m-1\right)\lambda_{i}+2f_{k}R_{ki}.
\end{equation}
In particular, from (\ref{2.7}) we obtain
\begin{equation}\label{2.8}
\left\langle \nabla S,\nabla f\right\rangle=2\left(m-1\right)\left\langle \nabla \lambda,\nabla f\right\rangle+2Ric\left(\nabla f,\nabla f\right).
\end{equation}
Next we recall Bochner formula
\begin{equation}\label{2.9}
\frac{1}{2}\Delta\left|\nabla f\right|^{2}=\left|Hess\left(f\right)\right|^{2}+Ric\left(\nabla f,\nabla f\right)+\left\langle \nabla\Delta f,\nabla f\right\rangle.
\end{equation}
Tracing (\ref{2.2})
\[
\ S=m\lambda-\Delta f
\]
so that
\begin{equation}\label{2.10}
\nabla\Delta f=m\nabla\lambda-\nabla S.
\end{equation}
Inserting (\ref{2.10}) into (\ref{2.9}) and using (\ref{2.8})
\begin{align*}
\ \frac{1}{2}\Delta\left|\nabla f\right|^{2}=&\left|Hess\left(f\right)\right|^{2}+Ric\left(\nabla f,\nabla f\right)+m\left\langle\nabla \lambda,\nabla f \right\rangle-\left\langle \nabla S,\nabla f\right\rangle\\
=&\left|Hess\left(f\right)\right|^{2}-Ric\left(\nabla f,\nabla f\right)-\left(m-2\right)\left\langle \nabla \lambda,\nabla f\right\rangle.
\end{align*}
On the other hand, using
\[
\ \frac{1}{2}\left\langle \nabla\left|\nabla u\right|^{2},X\right\rangle=Hess\left(u\right)\left(\nabla u, X\right)
\]
and (\ref{2.2}) from the above we obtain
\begin{align*}
\frac{1}{2}\Delta_{f}\left|\nabla f\right|^{2}=&\frac{1}{2}\Delta \left|\nabla f\right|^{2}-\frac{1}{2}\left\langle\nabla f,\nabla\left|\nabla f\right|^{2} \right\rangle\\
=&\left|Hess\left(f\right)\right|^{2}-\left(m-2\right)\left\langle \nabla \lambda,\nabla f\right\rangle
\\& \quad -Ric\left(\nabla f,\nabla f\right)-Hess\left(f\right)\left(\nabla f,\nabla f\right)\\
=&\left|Hess\left(f\right)\right|^{2}-\lambda\left|\nabla f\right|^{2}-\left(m-2\right)\left\langle \nabla \lambda,\nabla f\right\rangle,
\end{align*}
that is, (\ref{2.1})
\end{proof}
\begin{corollary}\label{cor_2.11}
\begin{equation}\label{2.12}
\left|\nabla f\right|\Delta_{f}\left|\nabla f\right|\geq -\lambda\left|\nabla f\right|^{2}-\left(m-2\right)\left\langle \nabla \lambda,\nabla f\right\rangle
\end{equation}
\end{corollary}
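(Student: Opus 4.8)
The plan is to deduce the inequality (\ref{2.12}) directly from the identity (\ref{2.1}) of Lemma \ref{lemma_2.1}, by converting the left-hand side from $\Delta_f\left|\nabla f\right|^2$ into $\left|\nabla f\right|\Delta_f\left|\nabla f\right|$ and then discarding a nonnegative remainder via a Kato-type inequality. First I would record the elementary identity, valid for any smooth $u$, which follows from $\Delta_f u = \Delta u - \langle\nabla f,\nabla u\rangle$ together with $\Delta u^2 = 2u\Delta u + 2\left|\nabla u\right|^2$, namely
\begin{equation*}
\frac{1}{2}\Delta_f u^2 = u\,\Delta_f u + \left|\nabla u\right|^2.
\end{equation*}
Choosing $u = \left|\nabla f\right|$ at points where $\nabla f \neq 0$, so that $u$ is smooth, and substituting (\ref{2.1}) gives
\begin{equation*}
\left|\nabla f\right|\Delta_f\left|\nabla f\right| = \left|\mathrm{Hess}(f)\right|^2 - \left|\nabla\left|\nabla f\right|\right|^2 - \lambda\left|\nabla f\right|^2 - (m-2)\langle\nabla\lambda,\nabla f\rangle.
\end{equation*}

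The decisive step is then the pointwise Kato inequality $\left|\nabla\left|\nabla f\right|\right|^2 \leq \left|\mathrm{Hess}(f)\right|^2$. Working in a local orthonormal coframe, differentiation of $\left|\nabla f\right|^2 = \sum_i f_i^2$ yields $\left|\nabla f\right|\,(\left|\nabla f\right|)_j = \sum_i f_i f_{ij}$; Cauchy--Schwarz then bounds $\bigl(\sum_i f_i f_{ij}\bigr)^2 \leq \left|\nabla f\right|^2\sum_i f_{ij}^2$, and summing over $j$ and dividing by $\left|\nabla f\right|^2$ gives precisely the claimed bound. Discarding the nonnegative quantity $\left|\mathrm{Hess}(f)\right|^2 - \left|\nabla\left|\nabla f\right|\right|^2$ from the displayed identity produces (\ref{2.12}).

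The one point requiring care is the critical set $\{\nabla f = 0\}$, where $\left|\nabla f\right|$ need not be differentiable and the computation above is not literally meaningful; I expect this to be the main, though mild, obstacle. The remedy is to interpret (\ref{2.12}) in the weak barrier sense there: at such a point the right-hand side reduces to $0$, and one exhibits a smooth upper barrier for $\left|\nabla f\right|$ satisfying the inequality, so that (\ref{2.12}) persists distributionally. In any case it holds classically wherever $\left|\nabla f\right| > 0$, which is all that the subsequent maximum-principle arguments require.
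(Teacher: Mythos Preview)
Your proof is correct and follows the same route as the paper: the paper's argument is simply ``from Kato's inequality $\left|\mathrm{Hess}(f)\right|^{2}\geq\left|\nabla\left|\nabla f\right|\right|^{2}$, inserting into (\ref{2.1}) we obtain (\ref{2.12}),'' which is exactly the manipulation you spell out in detail. Your added discussion of the critical set $\{\nabla f=0\}$ and the explicit derivation of Kato's inequality are helpful elaborations, but the underlying strategy is identical.
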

\begin{proof}
From Kato's inequality
\[
\ \left|Hess\left(f\right)\right|^{2}\geq\left|\nabla\left|\nabla f\right|\right|^{2}
\]
Inserting into (\ref{2.1}) we obtain (\ref{2.12}).
\end{proof}
We let $S$ denote the scalar curvature and $W$ the Weyl tensor of $\left(M,\left\langle ,\right\rangle\right)$.
\begin{lemma}\label{lemma_2.2}
Let $\left(M,\left\langle \, ,\right\rangle,\nabla f\right)$ be a gradient Ricci almost soliton of dimension $m\geq 3$. Then
\begin{align}\label{2.13}
\Delta_{f}R_{ik}&=\Delta\lambda\delta_{ik}+\left(m-2\right)\lambda_{ik}
% +\lambda_{i}f_{k}-\lambda_{k}f_{i}
+2\lambda R_{ik}
-\frac{2}{m-2}\left(\left|Ric\right|^{2}
 -\frac{S^{2}}{m-1}\right)\delta_{ik}\nonumber
 \\ &-\frac{2m}{\left(m-1\right)\left(m-2\right)}SR_{ik}
%\\&
+\frac{4}{m-2}R_{is}R_{sk}-2W_{ijks}R_{sj}.
\end{align}
Therefore, tracing with respect to $i$ and $k$
\begin{equation}\label{2.14}
\frac{1}{2}\Delta_{f}S=\lambda S-\left|Ric\right|^{2}+\left(m-1\right)\Delta \lambda.
\end{equation}
\end{lemma}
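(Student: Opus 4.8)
The plan is to prove (\ref{2.13}) by first establishing the ``soliton'' evolution identity
\[
\Delta_f R_{ik}=\Delta\lambda\,\delta_{ik}+(m-2)\lambda_{ik}+2\lambda R_{ik}-2R_{ijkl}R_{jl},
\]
and then recovering the stated form by substituting the Weyl decomposition of the curvature tensor into the last term. The trace identity (\ref{2.14}) will then follow from a one-line contraction.

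To obtain the boxed identity I would differentiate the defining equation twice. Combining (\ref{2.2}) with the commutation rule (\ref{5}) gives the first-order relation $R_{ik,j}-R_{ij,k}=R_{lijk}f_l-\lambda_k\delta_{ij}+\lambda_j\delta_{ik}$. Applying $\nabla_j$ and tracing over $j$ then produces
\[
\Delta R_{ik}-R_{ij,kj}=R_{lijk,j}f_l+R_{lijk}f_{lj}-\lambda_{ik}+\Delta\lambda\,\delta_{ik}.
\]
I would handle the three curvature terms separately: (i) commute $R_{ij,kj}$ by the Ricci identity for a $(0,2)$-tensor together with the contracted Bianchi identity (\ref{2.5}), giving $\tfrac12 S_{ik}-R_{lijk}R_{lj}+R_{il}R_{lk}$; (ii) rewrite $R_{lijk,j}$ through the once-contracted second Bianchi identity derived above as $R_{ik,l}-R_{kl,i}$; and (iii) expand $R_{lijk}f_{lj}$ by inserting $f_{lj}=\lambda\delta_{lj}-R_{lj}$, giving $\lambda R_{ik}-R_{lijk}R_{lj}$. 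Passing from $\Delta$ to $\Delta_f$ means subtracting $\langle\nabla f,\nabla R_{ik}\rangle=R_{ik,l}f_l$, which absorbs the corresponding term from (ii).

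It then remains to dispose of $\tfrac12 S_{ik}$, of the surviving first-order term $R_{kl,i}f_l$, and of the quadratic term $R_{il}R_{lk}$. For this I would differentiate (\ref{2.7}) in the form $\tfrac12 S_i=(m-1)\lambda_i+R_{il}f_l$, obtaining
\[
\tfrac12 S_{ik}=(m-1)\lambda_{ik}+R_{il,k}f_l+\lambda R_{ik}-R_{il}R_{lk}.
\]
Substituting this removes the $S_{ik}$ and the $R_{il}R_{lk}$ terms, turns $(m-1)\lambda_{ik}-\lambda_{ik}$ into $(m-2)\lambda_{ik}$ and $\lambda R_{ik}+\lambda R_{ik}$ into $2\lambda R_{ik}$; moreover the displayed expression for $R_{il,k}f_l$ is manifestly symmetric in $i,k$, which forces $R_{il,k}f_l=R_{kl,i}f_l$, so the two first-order terms cancel and the evolution identity drops out. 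I expect this bookkeeping — keeping signs and contraction conventions consistent through the fourth-derivative commutations and the two Bianchi reductions, and checking that $S_{ik}$, $R_{il}R_{lk}$ and the $\nabla f$-linear terms all cancel — to be the only genuine difficulty.

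Granting the evolution identity, the explicit form (\ref{2.13}) is pure algebra: insert
\[
R_{ijkl}=W_{ijkl}+\tfrac{1}{m-2}(R_{ik}\delta_{jl}-R_{il}\delta_{jk}+R_{jl}\delta_{ik}-R_{jk}\delta_{il})-\tfrac{S}{(m-1)(m-2)}(\delta_{ik}\delta_{jl}-\delta_{il}\delta_{jk}),
\]
valid for $m\geq 3$, into $-2R_{ijkl}R_{jl}$ and contract; this reproduces exactly the coefficients of $\delta_{ik}$, of $SR_{ik}$, of $R_{is}R_{sk}$ and of $W_{ijks}R_{sj}$ in (\ref{2.13}). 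Finally (\ref{2.14}) follows by setting $i=k$ and summing: $\delta_{ii}=m$, $R_{ii}=S$, $R_{is}R_{si}=|Ric|^2$, the contraction $W_{ijis}$ vanishes since the Weyl tensor is totally trace-free, and the two $S^2$ contributions cancel, leaving $\tfrac12\Delta_f S=\lambda S-|Ric|^2+(m-1)\Delta\lambda$.
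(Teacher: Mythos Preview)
Your proposal is correct and follows essentially the same route as the paper: differentiate the soliton equation, commute derivatives via the Ricci identity and the second Bianchi identity to reach an expression for $\Delta R_{ik}$ involving $\tfrac12 S_{ik}$, then eliminate $\tfrac12 S_{ik}$ by differentiating (\ref{2.7}), and finally insert the Weyl decomposition. The only cosmetic difference is where the symmetry argument is placed: the paper carries the computation through to obtain an explicit skew term $\lambda_i f_k-\lambda_k f_i$ in $\Delta_f R_{ik}$ and then observes it must vanish, whereas you invoke the same symmetry one step earlier, reading off from the differentiated form of (\ref{2.7}) that $R_{il,k}f_l$ is symmetric in $i,k$ (these two statements are equivalent via (\ref{2.15})).
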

\begin{remark}\rm{
Note that for (\ref{2.14}) we do not need the restriction $m\geq 3$.
Indeed (\ref{2.14}) can also be obtained by tracing (\ref{2.18})
below for which it is not required $m\geq 3$.}
\end{remark}
\begin{proof}
% Taking the covariant derivative of the almost soliton equation (\ref{0.2}) and
It follows from (\ref{2.3}) and the commutations relations
$f_{ijk}-f_{ikj} = R_{lijk}f_l$ that
% From (\ref{2.3}) using (\ref{5})
\begin{equation}\label{2.15}
R_{ik,j}-R_{jk,i}=f_{s}R_{ijks}+\lambda_{j}\delta_{ki}-\lambda_{i}\delta_{kj},
\end{equation}
and taking covariant derivatives  we obtain the commutation relations
\begin{equation}\label{2.16}
R_{ik,jt}-R_{jk,it}=f_{st}R_{ijks}+f_{s}R_{ijks,t}+\lambda_{jt}\delta_{ki}-\lambda_{it}\delta_{kj}.
\end{equation}
Also, from the commutation relations  for the second covariant derivative of $R_{ik}$
we have
\[
\ R_{ij,kl}- R_{ij,lk}=R_{it}R_{tjkl}+R_{jt} R_{tikl},
\]
whence, contracting we obtain
\begin{equation*}
%\label{2.17}
R_{jk,ij}=R_{jk,ji}+R_{ji,js}R_{sk}+R_{jiks}R_{sj}.
\end{equation*}
We now use (\ref{2.16}) to obtain
\[
\ \Delta R_{ik}=R_{ik,jj}=R_{jk,ij}+f_{s}R_{ijks,j}+f_{sj}R_{ijks}
+\Delta\lambda\delta_{ki}-\lambda_{ik}.
\]
On the other hand, from the second Bianchi identities we have
\[
\ f_{s}R_{ijks,j}=R_{ik,s}f_{s}-R_{is,k}f_{s}
\]
and inserting this into the above identity yields
\begin{multline*}
\ \Delta R_{ik}=f_{sj}R_{ijks}-f_{s}R_{is,k}+f_{s}R_{ik,s}\\+R_{jk,ji}+R_{sk}R_{is}
+R_{sj}R_{jiks}+\Delta\lambda\delta_{ik}-\lambda_{ik}.
\end{multline*}
Hence, from (\ref{2.4.1}) and (\ref{2.2})
\begin{multline}\label{2.18}
\Delta R_{ik}=\frac{1}{2}S_{ki}+\lambda R_{ik}+R_{sk}R_{is}+\Delta\lambda\delta_{ik}-\lambda_{ik}\\
-2R_{ijks}R_{sj}-R_{is,k}f_{s}+R_{ik,s}f_{s}.
\end{multline}
We shall now deal with the sum
\begin{equation}\label{2.19}
Z=\frac{1}{2}S_{ki}+R_{sk}R_{is}-f_{s}R_{is,k}.
\end{equation}
Towards this aim we first observe that taking covariant derivative of (\ref{2.7}) we have
\begin{equation*}
%\label{2.20}
\frac{1}{2}S_{ik}=f_{jk}R_{ij}+f_{j}R_{ij,k}+\left(m-1\right)\lambda_{ik}.
\end{equation*}
Substituting this into (\ref{2.19}) we obtain
\[
\ Z=\left(m-1\right)\lambda_{ki}+R_{kj}\left(f_{ji}+R_{ji}\right)+f_{s}\left(R_{ks,i}-R_{is,k}\right)
\]
and using the almost soliton equation (\ref{2.2}), (\ref{2.15}), and the fact that $f_{t}f_{s}R_{kits}=0$ because of the symmetries of the curvature tensor,
\[
\ Z=\left(m-1\right)\lambda_{ki}+\lambda R_{ki}+\lambda_{i}f_{k}-\lambda_{k}f_{i},
\]
%because, by the symmetries of the curvature tensor,
%\[
%\ f_{t}f_{s}R_{kits}=0.
%\]
Substituing into (\ref{2.18}) we therefore obtain
\begin{align}\label{2.21}
\Delta_{f}R_{ik}=&\Delta R_{ik}-f_{s}R_{ik,s}\\
=&2\lambda R_{ik}+\Delta\lambda\delta_{ik}+\left(m-2\right)\lambda_{ik}-2R_{ijks}R_{sj}+
\lambda_{i}f_{k}-\lambda_{k}f_{i}.\nonumber
\end{align}
Notice that the all the terms in the above formula are symmetric in $i,k$ with the exception of
$\lambda_{i}f_{k}-\lambda_{k}f_{i}$ which is skew symmetric.
So we must have $\lambda_{i}f_{k}-\lambda_{k}f_{i}=0$.
The conclusion now follows recalling the decomposition of the curvature tensor into its irreducible components.
\begin{align*} R_{ijks}=&W_{ijks}+\frac{1}{m-2}\left(R_{ik}\delta_{js}
-R_{is}\delta_{jk}+R_{js}\delta_{ik}-R_{jk}\delta_{is}\right)
\\&-\frac{S}{\left(m-1\right)\left(m-2\right)}\left(\delta_{ik}\delta_{js}-\delta_{is}\delta_{jk}\right).\nonumber
\end{align*}
Substituting into (\ref{2.21}) we obtain (\ref{2.13}).
\end{proof}
\begin{remark}
{\rm
In the course of the above proof we have obtained that if $M$ is a gradient almost Ricci soliton
with potential $f$ and soliton function $\lambda$ then $\lambda_{i}f_{k}-\lambda_{k}f_{i}=0$, that is,
$df\wedge d\lambda =0$. It follows that  $\lambda$ is a function of $f$ in the set $\{x: df\neq 0\}$.
}
\end{remark}

\begin{corollary}\label{2.22}
Let $\left(M,\left\langle\, ,\right\rangle\,\nabla f\right)$ be a conformally flat gradient Ricci almost soliton of dimension $m\geq 3$. Then
\begin{align*}
%\label{2.23}
\Delta_{f}R_{ik}&=\Delta\lambda\delta_{ik}+  \left(m-2\right)\lambda_{ik}
% +\lambda_{i}f_{k}-\lambda_{k}f_{i}
+2\lambda R_{ik} -\frac{2}{m-2}\left(\left|Ric\right|^{2}
-\frac{S^{2}}{m-1}\right)\delta_{ik} \nonumber
\\&
-\frac{2m}{\left(m-1\right)\left(m-2\right)}S R_{ik}
% \\ &
+\frac{4}{m-2}R_{is}R_{sk}.
\end{align*}
\end{corollary}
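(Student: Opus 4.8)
The plan is to obtain this corollary as an immediate specialization of Lemma \ref{lemma_2.2}. The identity (\ref{2.13}) derived there holds for \emph{every} gradient Ricci almost soliton of dimension $m\geq 3$, and when one compares (\ref{2.13}) with the formula claimed in the corollary, the only discrepancy is the single Weyl contribution $-2W_{ijks}R_{sj}$ appearing in (\ref{2.13}). Hence the entire task reduces to showing that this term drops out under the hypothesis of conformal flatness.

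First I would invoke the role played by the Weyl tensor in the irreducible decomposition of the Riemann tensor, the very decomposition used at the close of the proof of Lemma \ref{lemma_2.2}. For $m\geq 4$, conformal flatness of $\left(M,\left\langle\,,\right\rangle\right)$ is equivalent to the identical vanishing of the Weyl tensor, so that $W_{ijks}\equiv 0$ on $M$. For $m=3$ the Weyl tensor vanishes identically for any Riemannian metric (conformal flatness in this dimension being detected instead by the Cotton tensor), so again $W_{ijks}\equiv 0$. In either case the term $-2W_{ijks}R_{sj}$ is zero, and the two remaining curvature terms $-\frac{2}{m-2}\left(\left|Ric\right|^2-\frac{S^2}{m-1}\right)\delta_{ik}$, $-\frac{2m}{(m-1)(m-2)}SR_{ik}$ and $\frac{4}{m-2}R_{is}R_{sk}$ in (\ref{2.13}) are left untouched.

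Substituting $W_{ijks}=0$ into (\ref{2.13}) then yields precisely the stated identity, with the final Weyl term absent and all other terms unchanged. There is essentially no analytic obstacle here: the real content is carried entirely by Lemma \ref{lemma_2.2}, and the corollary is a one-line consequence once the vanishing of $W$ is recorded. The only point that deserves a moment's care is the dimensional distinction between $m=3$ and $m\geq 4$ in characterizing when $W\equiv 0$, but since both cases lead to the same conclusion, the argument applies uniformly for all $m\geq 3$.
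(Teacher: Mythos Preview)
Your proposal is correct and matches the paper's approach: the corollary is stated immediately after Lemma~\ref{lemma_2.2} with no separate proof, precisely because it follows at once from (\ref{2.13}) by setting $W_{ijks}=0$ under the conformal flatness hypothesis. Your remark distinguishing the cases $m=3$ and $m\geq 4$ is a nice clarification that the paper leaves implicit.
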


\begin{corollary}\label{2.24}
Let $\left(M,\left\langle\,  ,\right\rangle,\nabla f\right)$ be as in Corollary \ref{2.22} and let $T=Ric-\frac{S}{m}\left\langle \,,\right\rangle$ be the trace free Ricci tensor. Then
\begin{align}\label{2.25}
\frac{1}{2}\Delta_{f}\left|T\right|^{2}=&\left|\nabla T\right|^{2}+2\left(\lambda - S\frac{m-2}{m\left(m-1\right)}\right)\left|T\right|^{2}\\&+\left(m-2\right)\left\langle Hess\left(\lambda\right), T\right\rangle+\frac{4}{m-2}tr\left(T^{3}\right)\nonumber,
\end{align}
with $T^{3}=T\circ T\circ T$. In particular, using Okumura's Lemma
\begin{align}\label{2.26}
\frac{1}{2}\Delta_{f}\left|T\right|^{2}\geq& 2\left(\lambda-S\frac{m-2}{m\left(m-1\right)}\right)\left|T\right|^{2}
-\frac{4}{\sqrt{m\left(m-1\right)}}\left|T\right|^{3}\\&+\left(m-2\right)\left\langle Hess\left(\lambda\right), T\right\rangle.\nonumber
\end{align}
\end{corollary}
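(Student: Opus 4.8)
The plan is to reduce everything to the tensor $T$ via the identity $\left|T\right|^{2}=\left|Ric\right|^{2}-S^{2}/m$ and to feed in the two formulas already available, namely Corollary \ref{2.22} for $\Delta_{f}R_{ik}$ in the conformally flat case and the trace identity (\ref{2.14}). First I would record the two Bochner--Leibniz product rules for the symmetric diffusion operator,
\[
\tfrac{1}{2}\Delta_{f}\left|Ric\right|^{2}=R_{ik}\Delta_{f}R_{ik}+\left|\nabla Ric\right|^{2},\qquad \tfrac{1}{2}\Delta_{f}S^{2}=S\Delta_{f}S+\left|\nabla S\right|^{2},
\]
which follow at once from $\Delta_{f}u=\Delta u-\left\langle\nabla f,\nabla u\right\rangle$ and the ordinary Leibniz rule. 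Since $\left|T\right|^{2}=\left|Ric\right|^{2}-S^{2}/m$, this gives
\[
\tfrac{1}{2}\Delta_{f}\left|T\right|^{2}=R_{ik}\Delta_{f}R_{ik}+\left|\nabla Ric\right|^{2}-\tfrac{1}{m}\bigl(S\Delta_{f}S+\left|\nabla S\right|^{2}\bigr).
\]

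Next I would contract Corollary \ref{2.22} against $R_{ik}$ and substitute $\tfrac12\Delta_{f}S=\lambda S-\left|Ric\right|^{2}+\left(m-1\right)\Delta\lambda$ from (\ref{2.14}). To convert the outcome into $T$ I would use the elementary identities
\[
\left|\nabla Ric\right|^{2}=\left|\nabla T\right|^{2}+\tfrac{1}{m}\left|\nabla S\right|^{2},\qquad \left|Ric\right|^{2}=\left|T\right|^{2}+\tfrac{S^{2}}{m},
\]
\[
\left\langle Ric,Hess\left(\lambda\right)\right\rangle=\left\langle T,Hess\left(\lambda\right)\right\rangle+\tfrac{S}{m}\Delta\lambda,\qquad tr\left(Ric^{3}\right)=tr\left(T^{3}\right)+\tfrac{3S}{m}\left|T\right|^{2}+\tfrac{S^{3}}{m^{2}}.
\]
The first identity immediately cancels the $\left|\nabla S\right|^{2}$ contributions and leaves precisely the term $\left|\nabla T\right|^{2}$; the remaining three recast the algebraic terms in terms of $T$, $S$ and $\lambda$.

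The real work is the bookkeeping that collects the scalar terms. I expect three groups to reorganize cleanly: all $\Delta\lambda$ contributions (the $S\Delta\lambda$ from the trace of Corollary \ref{2.22}, the $\tfrac{S}{m}\Delta\lambda$ from rewriting $\left\langle Ric,Hess\left(\lambda\right)\right\rangle$, and the $-\tfrac{2\left(m-1\right)}{m}S\Delta\lambda$ coming from (\ref{2.14})) should cancel identically; the terms linear in $\lambda$ should collapse to $2\lambda\left|T\right|^{2}$; and the purely scalar terms in $S\left|Ric\right|^{2}$ and $S^{3}$ should reduce to the single coefficient $-\tfrac{2\left(m-2\right)}{m\left(m-1\right)}S\left|T\right|^{2}$, with every $S^{3}$ term cancelling. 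This last collapse is the main obstacle: it amounts to combining the coefficients $-\tfrac{2}{m-2}$, $-\tfrac{2m}{\left(m-1\right)\left(m-2\right)}$, $\tfrac{2}{m}$, $\tfrac{12}{m\left(m-2\right)}$ over the common denominator $m\left(m-1\right)\left(m-2\right)$ and recognizing the numerator as $-2\left(m-2\right)^{2}$, while a parallel check shows the $S^{3}$ coefficients sum to zero. Carrying out these verifications yields (\ref{2.25}).

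Finally, for (\ref{2.26}) I would simply discard the nonnegative term $\left|\nabla T\right|^{2}$ and apply Okumura's Lemma to the symmetric trace-free tensor $T$, which gives $tr\left(T^{3}\right)\geq-\tfrac{m-2}{\sqrt{m\left(m-1\right)}}\left|T\right|^{3}$; multiplying by $\tfrac{4}{m-2}>0$ produces the stated lower bound $-\tfrac{4}{\sqrt{m\left(m-1\right)}}\left|T\right|^{3}$, completing the proof.
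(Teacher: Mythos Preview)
Your approach is correct and complete; the cancellations you describe (the $\Delta\lambda$ terms, the $\lambda$ terms collapsing to $2\lambda|T|^{2}$, the $S|T|^{2}$ numerator coming out as $-2(m-2)^{2}$, and the $S^{3}$ terms summing to zero) all check out, and the application of Okumura's Lemma is exactly what is needed for (\ref{2.26}).

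The paper organizes the computation differently. Rather than working with $|T|^{2}=|Ric|^{2}-S^{2}/m$ and contracting Corollary~\ref{2.22} against $R_{ik}$, it first forms the tensor $\Delta_{f}T_{ik}=\Delta_{f}R_{ik}-\tfrac{1}{m}\delta_{ik}\Delta_{f}S$ and only then contracts against $T_{ik}$. The payoff is that every term proportional to $\delta_{ik}$ is annihilated immediately by the trace-freeness of $T$, so the $S\Delta\lambda$ and $S^{3}$ cancellations you had to verify by hand never arise; one is left with only four surviving terms to read off. Your route trades this structural shortcut for a slightly longer but entirely mechanical coefficient chase, and has the minor advantage that it makes the identity $|\nabla Ric|^{2}=|\nabla T|^{2}+\tfrac{1}{m}|\nabla S|^{2}$ explicit rather than implicit.
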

\begin{proof}
We compute
\begin{equation*}
%\label{2.27}
\begin{split}
\Delta_{f}\left|T\right|^{2}&=2\left|\nabla T\right|^{2}+2\left\langle T,\Delta T\right\rangle-\left\langle \nabla f, \nabla \left|T\right|^{2}\right\rangle\\
&= 2T_{ik,l}T_{ik,l} +2 T_{ik}\Delta_fT_{ik}.
\end{split}
\end{equation*}
Using equation (\ref{2.14}) and the definition of $T$, we have
\begin{align*}
\Delta_{f}T_{ik}&=\Delta_{f}R_{ik}-\frac{1}{m}\delta_{ik}\Delta_{f}S=
\Delta\lambda\delta_{ik}+\left(m-2\right)\lambda_{ik}
%+\lambda_{i}f_{k}-\lambda_{k}f_{i}
+2\lambda R_{ik}
\\ &
-\frac{2}{m-2}\left|Ric\right|^{2}\delta_{ik}
+\frac{2}{\left(m-2\right)\left(m-1\right)}S^{2}\delta_{ik}
-\frac{2m}{\left(m-1\right)\left(m-2\right)}SR_{ik}\\
&+\frac{4}{m-2}R_{is}R_{sk}-\frac{2}{m}\lambda S\delta_{ik}+\frac{2}{m}\left|Ric\right|^{2}\delta_{ik}-\frac{2}{m}\left(m-1\right)\Delta\lambda\delta_{ik}\\
%=&-\frac{m-2}{m}\Delta\lambda\delta_{ik}-\frac{2}{m}\delta_{ik}\lambda S +2\lambda
% R_{ik}+\left(m-2\right)\lambda_{ik}
%% +\lambda_{i}f_{k}-\lambda_{k}f_{i}
%\\ &-\frac{4}{m\left(m-2\right)}\delta_{ik}\left|Ric\right|^{2}
%-\frac{2mS}{\left(m-1\right)\left(m-2\right)}\left(R_{ik}-\frac{S}{m}\delta_{ik}\right)\\
%&+\frac{4}{m-2}R_{is}R_{sk}\\
& =-\frac{m-2}{m}\delta_{ik}\Delta\lambda+\left(m-2\right)\lambda_{ik} +2\lambda T_{ik}-\frac{2mS}{\left(m-1\right)\left(m-2\right)}T_{ik}
\\&-\frac{4}{m\left(m-2\right)}\delta_{ik}\left|Ric\right|^{2}
+\frac{4}{m-2} \bigl( T_{is}T_{sk} + \frac{S^2}{m^2} \delta_{ik} +\frac{2S}{m}T_{ik}\bigr).
% +\lambda_{i}f_{k}-\lambda_{k}f_{i}.
\end{align*}
Thus, recalling that $T$ is trace free, we obtain
\begin{align*}
T_{ik}\Delta_f T_{ik}&= 2\lambda\left|T\right|^{2}+\left(m-2\right)\lambda_{ik}T_{ik}
\\&
-\frac{2 (m-2)  S}{m \left(m-1\right)}\left|T\right|^{2}
+ \frac{4}{m-2} \mathrm{tr}(T^3),
\end{align*}
and (\ref{2.25}) follows. Inequality (\ref{2.26}) follows immediately, since by Okumura's Lemma, \cite{Ok},
\[
\ tr\left(T^{3}\right)\geq -\frac{m-2}{\sqrt{m\left(m-1\right)}}\left|T\right|^{3}.
\]
\end{proof}

\section{Volume comparison results}
In order to prove Theorem \ref{A} we need two auxiliary results. The first is an improvement of Theorem 1.2 (a) of Wei and Wylie \cite{WW}, where they assume $\theta$ and $G$ below to be constant.
\begin{theorem}\label{3.1}
Let $\left(M,\left\langle\, ,\right\rangle,e^{-f}d\mathrm{vol}\right)$ be a complete weighted manifold such that
\begin{equation}\label{3.2}
\left\langle \nabla r,\nabla f\right\rangle\geq -\theta\left(r\right),
\end{equation}
for some non-decreasing function $\theta\in C^{0}\left(\mathbb{R}_{0}^{+}\right)$. Assume
\begin{equation}\label{3.3}
Ric_{f}\geq-\left(m-1\right)G\left(r\right)\left\langle\, ,\right\rangle
\end{equation}
for a smooth positive function $G$ on $\mathbb{R}^{+}_{0}$, even at the origin. Let $g$ be a solution on $\mathbb{R}^{+}_{0}$ of
\begin{equation}\label{3.4}
\left\{
\begin{array}{l}
g^{\prime\prime}-Gg\geq0\\
g\left(0\right)=0, \,\,\,g^{\prime}\left(0\right)\geq 1\\
\end{array}
\right.
\end{equation}
Then there exists a constants $D>0$ such that $\forall r\geq 0$
\begin{equation}\label{3.5}
vol_{f}\left(B_{r}\right)\leq D\int^{r}_{0}g^{m-1}\left(t\right)e^{\int^{t}_{0}\theta\left(s\right)ds}dt.
\end{equation}
\end{theorem}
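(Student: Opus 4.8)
The plan is to estimate the weighted volume by integrating the weighted area of geodesic spheres, so the heart of the matter is a pointwise bound on the weighted area element in geodesic polar coordinates. First I would fix a point $o$ and work in polar coordinates, writing the Riemannian volume element as $d\mathrm{vol} = \mathcal{A}(t,\xi)\, dt\, d\xi$ on the star-shaped domain inside the cut locus, where $\mathcal{A}$ is the area density of the geodesic sphere along the unit-speed geodesic in direction $\xi \in S_o M$. The weighted area density is then $\mathcal{A}_f(t,\xi) = e^{-f(\exp_o(t\xi))}\mathcal{A}(t,\xi)$, and the whole estimate reduces to controlling its logarithmic derivative $\partial_t \log \mathcal{A}_f = \partial_t \log \mathcal{A} - \langle \nabla r, \nabla f\rangle$.

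Next I would recall the classical Riccati/Bochner machinery: if $\mathbf{m}(t) = \partial_t \log \mathcal{A}(t,\xi)$ is the mean curvature of the geodesic sphere, then $\mathbf{m}' + \frac{\mathbf{m}^2}{m-1} \leq \mathbf{m}' + |\mathrm{Hess}\, r|^2 = -\mathrm{Ric}(\partial_r,\partial_r)$. Adding the contribution of the weight, the weighted mean curvature $\mathbf{m}_f = \mathbf{m} - \partial_t(f\circ\gamma)$ satisfies a differential inequality driven by $\mathrm{Ric}_f(\partial_r,\partial_r) \geq -(m-1)G(r)$. The cleanest route, following the Wei--Wylie comparison in the spirit of the reference \cite{WW}, is to compare directly with the model: using (\ref{3.3}) together with (\ref{3.2}), one shows by a Riccati comparison argument that
\begin{equation*}
\partial_t \log \mathcal{A}(t,\xi) \leq (m-1)\frac{g'(t)}{g(t)},
\end{equation*}
where $g$ solves (\ref{3.4}); the ODE inequality $g'' - Gg \geq 0$ is precisely the condition that makes $(m-1)g'/g$ a supersolution of the relevant Riccati inequality, and the initial conditions $g(0)=0$, $g'(0)\geq 1$ match the asymptotics $\mathcal{A}(t,\xi)\sim t^{m-1}$ as $t\to 0^+$. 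Integrating this from a small radius and exponentiating yields $\mathcal{A}(t,\xi) \leq C\, g^{m-1}(t)$ for a constant $C$ independent of $\xi$.

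Then I would fold in the weight. From (\ref{3.2}) we have $\partial_t(f\circ\gamma)=\langle \nabla r,\nabla f\rangle \geq -\theta(t)$, hence $f(\exp_o(t\xi)) - f(o) \geq -\int_0^t \theta(s)\,ds$, which gives
\begin{equation*}
e^{-f(\exp_o(t\xi))} \leq e^{-f(o)} e^{\int_0^t \theta(s)\,ds}.
\end{equation*}
Combining with the area bound produces $\mathcal{A}_f(t,\xi) \leq D_0\, g^{m-1}(t)\, e^{\int_0^t \theta(s)\,ds}$ for $D_0 = C e^{-f(o)}$, uniformly in $\xi$. Integrating $\mathcal{A}_f$ over the ball, using the convention that the area density is set to zero beyond the cut locus (so the integral over the full sphere bundle dominates the integral over the interior of the cut locus), and carrying out the $d\xi$ integration over $S_o M$ absorbs the remaining angular factor into the constant $D$, yielding exactly (\ref{3.5}).

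The main obstacle is the Riccati comparison step with a non-constant bound $G(r)$ and a non-constant $\theta(r)$: unlike the constant-curvature model used in the standard Bishop--Gromov theorem, here one must verify that the auxiliary function $g$ defined by the differential inequality (\ref{3.4}) genuinely dominates the true area density. The delicate points are handling the behaviour near $t=0$, where both $\mathbf{m}$ and $(m-1)g'/g$ blow up like $(m-1)/t$, so the comparison must be set up as a limiting argument as the inner radius tends to zero, and ensuring the comparison remains valid up to the cut locus by the usual convention of extending the mean curvature comparison through cut points (where the area density only jumps downward). Once the pointwise area comparison is secured, the incorporation of the weight via (\ref{3.2}) and the final integration are routine.
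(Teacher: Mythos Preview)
There is a genuine gap in the decomposition you propose. You claim that the \emph{unweighted} area density obeys $\partial_t\log\mathcal A(t,\xi)\le(m-1)g'(t)/g(t)$, and then multiply by the separate bound $e^{-f}\le e^{-f(o)}e^{\int_0^t\theta}$. But the Riccati inequality for $\mathbf m=\partial_t\log\mathcal A$ reads
\[
\mathbf m'+\frac{\mathbf m^2}{m-1}\le -\mathrm{Ric}(\partial_r,\partial_r)
= -\mathrm{Ric}_f(\partial_r,\partial_r)+(f\circ\gamma)''
\le (m-1)G(t)+(f\circ\gamma)'',
\]
and hypothesis (\ref{3.2}) controls only the \emph{first} derivative $(f\circ\gamma)'\ge -\theta(t)$; it gives no pointwise bound on the Hessian term $(f\circ\gamma)''$. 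So the comparison $\mathbf m\le(m-1)g'/g$ for the unweighted mean curvature simply does not follow from the assumptions.

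The paper's proof (following Wei--Wylie) does not try to separate the weight. Instead it multiplies the Riccati inequality by $h^2$, where $h$ solves $h''=Gh$, $h(0)=0$, $h'(0)=1$, and integrates. The point is that after one integration the troublesome term appears as $\int_0^r h^2(f\circ\gamma)''$, and an integration by parts converts this into $-\int_0^r (h^2)'(f\circ\gamma)'$, which \emph{is} controlled by (\ref{3.2}) together with $(h^2)'\ge 0$ and the monotonicity of $\theta$. The outcome is a bound on the \emph{weighted} Laplacian,
\[
\Delta_f r\le (m-1)\frac{h'}{h}+\theta(r),
\]
which integrates directly to $\mathrm{vol}_f(\partial B_r)\le D\,h^{m-1}(r)e^{\int_0^r\theta}$; the passage from $h$ to $g$ is then a Sturm-type sub/supersolution comparison. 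Your outline needs this integration-by-parts step (or an equivalent device) to trade the uncontrolled second derivative of $f$ for the controlled first derivative; without it the argument does not close.
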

\begin{proof}
Let $h$ be the solution on $\mathbb{R}^{+}_{0}$ of the Cauchy problem
\begin{equation}\label{3.7}
\left\{
\begin{array}{l}
h^{\prime\prime}-Gh=0\\
h\left(0\right)=0, \,\,\,h^{\prime}\left(0\right)= 1\\
\end{array}
\right.
\end{equation}
Note that $h>0$ on $\mathbb{R}^{+}$ since $G\geq0$. Fix $x\in M\setminus\left(cut\left(o\right)\cup\left\{o\right\}\right)$ and let $\gamma:\left[0,l\right]\rightarrow M$, $l=length\left(\gamma\right)$, be a minimizing geodesic with $\gamma\left(0\right)=o$, $\gamma\left(l\right)=x$. Note that $G\left(r\circ\gamma\right)\left(t\right)=G\left(t\right)$. From Bochner formula applied to the distance function $r$ we have
\begin{equation}\label{3.7a}
\ 0=\left|Hess\left(r\right)\right|^{2}+\left\langle \nabla r,\nabla\Delta r\right\rangle+ Ric\left(\nabla r, \nabla r\right)
\end{equation}
so that, using the Schwarz inequality, it follows that  the function $\varphi\left(t\right)=\left(\Delta r\right)\circ \gamma\left(t\right)$, $t\in\left(0,l\right]$, satisfies the Riccati inequality
\begin{equation}\label{3.8}
\varphi^{\prime}+\frac{1}{m-1}\varphi^{2}\leq -Ric\left(\nabla r\circ\gamma,\nabla r\circ\gamma\right)
\end{equation}
on $\left(0,l\right]$. With $h$ as in (\ref{3.7}) and using the definition of $Ric_{f}$, (\ref{3.3}) and (\ref{3.8}) we compute
\begin{align*}
\left(h^{2}\varphi\right)^{\prime}=&2hh^{\prime}\varphi+h^{2}\varphi^{\prime}\\
\leq&2hh^{\prime}\varphi-\frac{h^{2}\varphi^{2}}{m-1}+\left(m-1\right)G\left(t\right)h^{2}+ Hess\left(f\right)\left(\nabla r\circ\gamma,\nabla r\circ\gamma\right)h^{2}\\
=&-\left(\frac{h\varphi}{\sqrt{m-1}}-\sqrt{m-1}h^{\prime}\right)^{2}+\left(m-1\right)\left(h^{\prime}\right)^{2}+\left(m-1\right)G\left(t\right)h^{2}\\
&+h^{2}\left(f\circ\gamma\right)^{\prime\prime}.
\end{align*}
We let
\[
\ \varphi_{G}\left(t\right)=\left(m-1\right)\frac{h^{\prime}}{h}\left(t\right)
\]
so that, using (\ref{3.7})
\[
\ \left(h^{2}\varphi_{G}\right)^{\prime}=\left(m-1\right)\left(h^{\prime}\right)^{2}+\left(m-1\right)G\left(t\right)h^{2}.
\]
Inserting into the above inequality we obtain
\begin{equation}\label{3.9}
\left(h^{2}\varphi\right)^{\prime}\leq\left(h^{2}\varphi_{G}\right)^{\prime}+h^{2}\left(f\circ\gamma\right)^{\prime\prime}
\end{equation}
Integrating (\ref{3.9}) on $\left[0,r\right]$ and using (\ref{3.7}) yields
\begin{equation}\label{3.10}
h^{2}\left(r\right)\varphi\left(r\right)\leq h^{2}\left(r\right)\varphi_{G}\left(r\right)+\int^{r}_{0}h^{2}\left(f\circ\gamma\right)^{\prime\prime}.
\end{equation}
Next we recall that
\begin{equation}
\label{3.9bis}
\varphi_{f}=\left(\Delta_{f}r\right)\circ\gamma
=\left(\Delta r\right)\circ\gamma-\left\langle \nabla f,\nabla r\right\rangle\circ\gamma=\varphi-\left(f\circ\gamma\right)^{\prime}
\end{equation}
Thus, using (\ref{3.10}), (\ref{3.7}) and integrating by parts we compute
\begin{align*}
h^{2}\varphi_{f}\leq&h^{2}\varphi_{G}-h^{2}\left(f\circ\gamma\right)^{\prime}+\int^{r}_{0}h^{2}\left(f\circ\gamma\right)^{\prime\prime}dt\\
=&h^{2}\varphi_{G}-h^{2}\left(f\circ\gamma\right)^{\prime}+\left(h^{2}\left(f\circ\gamma\right)^{\prime}\left.\right)\right|^{r}_{0}-\int^{r}_{0}\left(h^{2}\right)^{\prime}\left(f\circ\gamma\right)^{\prime}dt\\
=&h^{2}\varphi_{G}-\int^{r}_{0}\left(h^{2}\right)^{\prime}\left(f\circ\gamma\right)^{\prime}dt,
\end{align*}
that is,
\begin{equation}\label{3.11}
h^{2}\varphi_{f}\leq h^{2}\varphi_{G}-\int^{r}_{0}\left(h^{2}\right)^{\prime}\left(f\circ\gamma\right)^{\prime}dt
\end{equation}
on $\left(0,l\right]$. We observe that, because of (\ref{3.7}) and $G\geq0$, $\left(h^{2}\right)^{\prime}=2hh^{\prime}\geq 0$
so that, using (\ref{3.2}), (\ref{3.7}) and the monotonicity of $\theta$, (\ref{3.11}) yields
\[
\ h^{2}\varphi_{f}\leq h^{2}\varphi_{G}+\theta\left(r\right)h^{2}
\]
on $\left(0,l\right]$, and
\begin{equation*}
%\label{3.12}
\varphi_{f}\leq\varphi_{G}+\theta\left(r\right)
\end{equation*}
on $\left(0,l\right]$. In particular
\begin{equation}\label{3.13}
\Delta_{f}r\left(x\right)\leq\left(m-1\right)\frac{h^{\prime}\left(r\left(x\right)\right)}{h\left(r\left(x\right)\right)}+ \theta\left(r\left(x\right)\right)
\end{equation}
on $M\setminus\left(\left\{0\right\}\cup cut\left(o\right)\right)$.
Proceeding as in Theorem 2.4 of \cite{PRS-Progress} one shows that (\ref{3.13})
holds weakly on all of $M$ and reasoning as in Theorem 2.14 of \cite{PRS-Progress}
one shows that
\begin{equation}\label{3.13a}
\ vol_{f}\left(\partial B_{r}\right)\leq Dh\left(r\right)^{m-1}e^{\int^{r}_{0}\theta\left(t\right)dt}
\end{equation}
for some constant $D>0$. Integrating over $\left[0,r\right]$ and using the co-area formula we get
\begin{equation}\label{3.14}
vol_{f}\left(B_{r}\right)\leq D\int^{r}_{0}h\left(t\right)^{m-1}e^{\int^{t}_{0}\theta\left(s\right)ds}dt.
\end{equation}
Since $g$ in (\ref{3.4}) is a subsolution of (\ref{3.7}) it follows, by Lemma 2.1 in \cite{PRS-Progress}, that $h\leq g$ on $\mathbb{R}^{+}_{0}$ so that (\ref{3.14}) immediately implies (\ref{3.5})
\end{proof}
A second estimate on $\varphi_{f}$ can also be derived, replacing assumption (\ref{3.2}) with
\begin{equation}\label{3.15}
\xi\left(r\right)\leq f\leq \omega\left(r\right),
\end{equation}
for some functions $\omega,\xi\in C^{1}\left(\mathbb{R}^{+}_{0}\right)$ with $\omega$ non decreasing and such that $\xi^{\prime}\left(r\right)\leq\omega^{\prime}\left(r\right)$ .\\
Towards this aim we integrate (\ref{3.11}) again by parts to obtain
\[
\ h^{2}\varphi_{f}\leq h^{2}\varphi_{G}-\left.\left[\left(h^{2}\right)^{\prime}\left(f\circ\gamma\right)\right]\right|^{r}_{0}+\int^{r}_{0}\left(h^{2}\right)^{\prime\prime}\left(f\circ\gamma\right)dt.
\]
Now, using (\ref{3.7}),
\[
\ \left(h^{2}\right)^{\prime\prime}=2\left(h^{\prime}\right)^{2}+2Gh^{2}\geq0,
\]
because of the sign of $G$. Thus using (\ref{3.15}), (\ref{3.7}) and the fact that $\omega$ is non-decreasing, from the above we obtain
\begin{align*}
h^{2}\varphi_{f}\leq& h^{2}\varphi_{G}-\left(h^{2}\right)^{\prime}\left(f\circ\gamma\left.\right)\right|^{r}_{0}+\omega\left(r\right)\left.\left(h^{2}\right)^{\prime}\right|^{r}_{0}\\
\leq&h^{2}\varphi_{G}-\left(h^{2}\right)^{\prime}\left(r\right)\left(f\circ\gamma\right)\left(r\right)+\left(h^{2}\right)^{\prime}\left(r\right)\omega\left(r\right)\\
\leq&h^{2}\varphi_{G}+\left(h^{2}\right)^{\prime}\left(r\right)\left[\omega\left(r\right)-\left(f\circ\gamma\right)\left(r\right)\right].
\end{align*}
No
\[
\ \left(h^{2}\right)^{\prime}=2hh^{\prime}=\frac{2}{m-1}h^{2}\left(m-1\right)\frac{h^{\prime}}{h}=\frac{2}{m-1}h^{2}\varphi_{G},\,\,r>0
\]
so that the above inequality may be rewritten  as
\[
\ h^{2}\varphi_{f}\leq h^{2}\left(1+\frac{2}{m-1}\left(\omega\left(r\right)-\left(f\circ\gamma\right)\left(r\right)\right)\right)\varphi_{G},\,\,r>0
\]
and using (\ref{3.15})
\begin{equation*}%\label{3.15.0}
\varphi_{f}\leq\left(1+\frac{2}{m-1}\left(\omega\left(r\right)-\xi\left(r\right)\right)\right)\varphi_{G},\,\,r>0.
\end{equation*}
Let $\tilde{\omega}\left(r\right)\geq\omega\left(r\right)-\xi\left(r\right)\geq0$.
Similarly to what we did in Theorem \ref{3.1} we arrive at (\ref{3.13a}),
where $\theta\left(t\right)$ is now substituted by
$\frac{2}{m-1}\tilde{\omega}\left(t\right)\varphi_{G}\left(t\right)$.
Thus we need to estimate
$e^{\int_{r_{0}}^{r}2\tilde{\omega}\left(t\right)\frac{h^{\prime}}{h}}$.
\begin{align*}
&\int^{r}_{r_{0}}\frac{2}{m-1}\tilde{\omega}\left(t\right)\frac{h^{\prime}}{h}\\
=&\frac{2}{m-1}\tilde{\omega}\left(r\right)\log h^{m-1}\left(r\right)-\frac{2}{m-1}\tilde{\omega}\left(r_{0}\right)\log h^{m-1}\left(r_{0}\right)\\&-\int^{r}_{r_{0}}\frac{2}{m-1}\tilde{\omega}^{\prime}\left(t\right)\log h^{m-1}\left(t\right)dt.
\end{align*}
Now, by (\ref{3.7}), $h\left(t\right)\nearrow +\infty$ as $t\rightarrow+\infty$. Choose $r_{0}$ sufficiently large that $h\left(r_{0}\right)\geq1$. Since $\tilde{\omega}^{\prime}\geq0$
\begin{equation*}
\int^{r}_{r_{0}}\frac{2}{m-1}\tilde{\omega}\left(t\right)\left(\log h^{m-1}\right)^{\prime}dt\leq\log\left(h\left(r\right)\right)^{2\tilde{\omega}\left(r\right)}-A,
\end{equation*}
and
\[
\ e^{\int^{r}_{r_{0}}\frac{2}{m-1}\tilde{\omega}\left(t\right)\varphi_{G}}\leq h\left(r\right)^{2\tilde{\omega}\left(r\right)}e^{-A}.
\]
Hence, from (\ref{3.13a}),
\[
\ vol_{f}\left(\partial B_{r}\right)\leq Dh\left(r\right)^{m-1+2\tilde{\omega}\left(r\right)}
\]
Since $h\leq g$ we have thus proven the following result, which improves on Theorem 1.2 (b) of Wei and Wylie \cite{WW}.
\begin{theorem}
Let $\left(M,\left\langle\, ,\right\rangle,e^{-f}d\mathrm{vol}\right)$ be a complete weighted manifold such that
\[
\ \xi\left(r\right)\leq f\leq \omega\left(r\right)
\]
for some functions $\omega, \xi\in C^{1}\left(\mathbb{R}^{+}_{0}\right)$ with $\omega$ non decreasing and such that $\xi^{\prime}\left(r\right)\leq\omega^{\prime}\left(r\right)$. Assume
\[
\ Ric_{f}\geq -\left(m-1\right)G\left(r\right)\left\langle ,\right\rangle
\]
for a smooth positive function $G$ on $\mathbb{R}^{+}_{0}$, even at the origin.\\
Let $\tilde{\omega}\left(r\right)=\omega\left(r\right)-\xi\left(r\right)$ and $g$ be a solution on $\mathbb{R}^{+}_{0}$ of
\begin{equation*}
\left\{\begin{array}{l}
g^{\prime\prime}-Gg\geq 0\\
g\left(0\right)=0,\,\,g^{\prime}\left(0\right)\geq 1\\
\end{array}\right.
\end{equation*}
Then there exist constants $C$,$B>0$ such that, $\forall r\geq
r_{0}>0$,
\[
\ vol_{f}\left(B_{r}\right)\leq C+B\int^{r}_{r_{0}}g\left(t\right)^{\left(m-1\right)+2\tilde{\omega}\left(t\right)}dt.
\]
\end{theorem}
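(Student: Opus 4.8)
The plan is to reuse, almost verbatim, the machinery already set up in the proof of Theorem~\ref{3.1}, diverging only at the point where the bound on $\left\langle\nabla r,\nabla f\right\rangle$ was invoked. The starting point is inequality (\ref{3.11}), namely $h^{2}\varphi_{f}\leq h^{2}\varphi_{G}-\int_{0}^{r}\left(h^{2}\right)^{\prime}\left(f\circ\gamma\right)^{\prime}dt$, which holds along any minimizing geodesic $\gamma$ issuing from $o$ and was derived solely from the Bochner formula, the Riccati inequality and the weighted Ricci lower bound (\ref{3.3}); here $h$ solves (\ref{3.7}), $\varphi_{f}=(\Delta_{f}r)\circ\gamma$ and $\varphi_{G}=(m-1)h^{\prime}/h$. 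Since now I only control $f$ two-sidedly through (\ref{3.15}) rather than controlling its radial derivative, I would integrate the remaining integral by parts once more, landing on $h^{2}\varphi_{f}\leq h^{2}\varphi_{G}-\left[(h^{2})^{\prime}(f\circ\gamma)\right]_{0}^{r}+\int_{0}^{r}(h^{2})^{\prime\prime}(f\circ\gamma)\,dt$.

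The core of the argument is then to turn this into a pointwise comparison. First I would note that $(h^{2})^{\prime\prime}=2(h^{\prime})^{2}+2Gh^{2}\geq0$ because $G>0$; together with the monotonicity of $\omega$ and the upper bound $f\circ\gamma(t)\leq\omega(t)\leq\omega(r)$ this yields $\int_{0}^{r}(h^{2})^{\prime\prime}(f\circ\gamma)\,dt\leq\omega(r)(h^{2})^{\prime}(r)$, the boundary term at $0$ vanishing since $h(0)=0$. Collecting terms and using the lower bound $f\circ\gamma(r)\geq\xi(r)$ gives $h^{2}\varphi_{f}\leq h^{2}\varphi_{G}+(h^{2})^{\prime}(r)\,\tilde\omega(r)$ with $\tilde\omega=\omega-\xi$. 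The identity $(h^{2})^{\prime}=\tfrac{2}{m-1}h^{2}\varphi_{G}$ (immediate from $\varphi_{G}=(m-1)h^{\prime}/h$) then converts this into $\varphi_{f}\leq\bigl(1+\tfrac{2}{m-1}\tilde\omega(r)\bigr)\varphi_{G}$, i.e.\ the Laplacian comparison $\Delta_{f}r\leq(m-1)\tfrac{h^{\prime}}{h}+2\tilde\omega(r)\tfrac{h^{\prime}}{h}$ on $M\setminus(\{o\}\cup\mathrm{cut}(o))$. Exactly as in Theorem~\ref{3.1} (following Theorems 2.4 and 2.14 of \cite{PRS-Progress}), I would extend this inequality weakly across the cut locus and integrate it to obtain $\mathrm{vol}_{f}(\partial B_{r})\leq D\,h(r)^{m-1}\exp\!\bigl(\int_{r_{0}}^{r}2\tilde\omega\,\tfrac{h^{\prime}}{h}\bigr)$, the analogue of (\ref{3.13a}) with $\theta$ replaced by $\tfrac{2}{m-1}\tilde\omega\,\varphi_{G}$.

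It remains to estimate the exponential factor, which is where the hypothesis $\xi^{\prime}\leq\omega^{\prime}$ enters. Integrating by parts once more, $\int_{r_{0}}^{r}2\tilde\omega\,\tfrac{h^{\prime}}{h}\,dt=2\tilde\omega(r)\log h(r)-2\tilde\omega(r_{0})\log h(r_{0})-\int_{r_{0}}^{r}2\tilde\omega^{\prime}\log h\,dt$. Since $G>0$ forces $h\nearrow+\infty$, I would fix $r_{0}$ so large that $h(r_{0})\geq1$; then $\log h\geq0$ on $[r_{0},r]$ and $\tilde\omega^{\prime}=\omega^{\prime}-\xi^{\prime}\geq0$, so the last integral is nonnegative and may be discarded, giving $\exp(\int_{r_{0}}^{r}2\tilde\omega\,h^{\prime}/h)\leq h(r)^{2\tilde\omega(r)}e^{-A}$ with $A=2\tilde\omega(r_{0})\log h(r_{0})$. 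Hence $\mathrm{vol}_{f}(\partial B_{r})\leq D\,h(r)^{(m-1)+2\tilde\omega(r)}$, and replacing $h$ by $g$ through the Sturm comparison $h\leq g$ coming from (\ref{3.4}) (Lemma 2.1 of \cite{PRS-Progress}), the co-area formula, and integration over $[r_{0},r]$ yield the stated bound, with $C$ absorbing $\mathrm{vol}_{f}(B_{r_{0}})$. The only delicate points I anticipate are the two already handled in Theorem~\ref{3.1}: justifying the Laplacian comparison in the weak (barrier) sense on all of $M$, and keeping the signs straight in the final integration by parts so that the monotonicity hypotheses on $\omega$ and $\tilde\omega$ are deployed exactly where the positivity of the relevant integrands is needed.
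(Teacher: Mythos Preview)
Your proposal is correct and follows essentially the same argument as the paper: starting from (\ref{3.11}), integrating by parts once more, exploiting $(h^{2})''\geq 0$ together with the monotonicity of $\omega$ and the two-sided bound (\ref{3.15}) to reach $\varphi_{f}\leq\bigl(1+\tfrac{2}{m-1}\tilde\omega(r)\bigr)\varphi_{G}$, then estimating $\exp\bigl(\int_{r_{0}}^{r}2\tilde\omega\,h'/h\bigr)$ via a further integration by parts where $\tilde\omega'\geq 0$ and $\log h\geq 0$ allow the remainder term to be dropped. The order of steps, the use of the Sturm comparison $h\leq g$, and the handling of the constants all match the paper's proof.
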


We end this discussion
% of volume estimates for weighted manifolds
with the following simple proposition which marginally extends a previous result by
Wei and Wylie, \cite{WW}, and which will be used in the proof of Theorem~\ref{B}.

\begin{proposition}
\label{prop_volest1} Let $(M,\langle \,,\rangle, e^{-f})$ be a
weighted manifold and assume that
\begin{equation*}
%\label{prop_eq1}
 \mathrm{Ric}_f \geq D (1+r)^{-\mu}.
\end{equation*}
\begin{itemize}
\item[(i)] If $D>0$ and $0\leq \mu\leq 1$, then there exist constants $C_j$
such that  for every $r>2$,
\begin{equation*}
\mathrm{vol}_f (\partial B_r)\leq
\begin{cases}
C_1 e^{-C_2 r\log (1+r)} &\text{if } \mu = 1\\
C_1 e^{-C_2 r^{2-\mu}}&\text{if } 0\leq \mu <1
\end{cases}
\text{\,\, and \,\,} \mathrm{vol}_f (B_r)\leq C_3
\end{equation*}
\item[(ii)] If $D=0$ then there exist constants $C_j$ such that for every $r>2$,
\begin{equation*}
\mathrm{vol}_f (\partial B_r)\leq C_1 e^r \text{\,\, and \,\, }
\mathrm{vol}_f (B_r)\leq C_2 e^r.
\end{equation*}
\item[(iii)] If $D<0$ then there exist constants $C_j$ such that for every $r>2$,
\begin{equation*}
\mathrm{vol}_f (\partial B_r)\leq
\begin{cases}
C_1 e^{C_2 r} &\text{if } \mu>1\\
C_1 e^{C_2 r\log r} &\text{if } \mu = 1\\
C_1 e^{C_2 r^{2-\mu}} &\text{if } 0\leq \mu<1
\end{cases}
\end{equation*}
and
\begin{equation*}
\mathrm{vol}_f (B_r)\leq
\begin{cases}
C_3 e^{C_2r} &\text{if } \mu >1\\
C_3 (\log r)^{-1} e^{C_2 r\log r}  &\text{if } \mu=1\\
C_3 r^{\mu-1} e^{C_2 r^{2-\mu}}  &\text{if }  0\leq \mu <1.
\end{cases}
\end{equation*}
\end{itemize}
\end{proposition}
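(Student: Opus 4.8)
The plan is to run a weighted mean-curvature comparison along minimizing geodesics issuing from the fixed origin $o$, exactly as in the proof of Theorem~\ref{3.1}, but retaining only the term that the hypothesis controls. Fix $x\notin\mathrm{cut}(o)\cup\{o\}$ and a minimizing unit-speed geodesic $\gamma$ from $o$ to $x$, and set $\varphi(t)=(\Delta r)\circ\gamma(t)$ and $\varphi_f(t)=(\Delta_f r)\circ\gamma(t)=\varphi(t)-(f\circ\gamma)'(t)$ as in (\ref{3.9bis}). Starting from the Bochner identity (\ref{3.7a}) and the Riccati inequality (\ref{3.8}), and using that $\mathrm{Ric}_f(\nabla r,\nabla r)=\mathrm{Ric}(\nabla r,\nabla r)+(f\circ\gamma)''$ along the geodesic, I would obtain
\begin{equation*}
\varphi_f'=\varphi'-(f\circ\gamma)''\leq-\frac{\varphi^2}{m-1}-\mathrm{Ric}_f(\nabla r,\nabla r)\leq-\mathrm{Ric}_f(\nabla r,\nabla r),
\end{equation*}
where in the last step one simply discards the nonnegative term $\varphi^2/(m-1)$. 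This is the crux: no hypothesis on $\langle\nabla f,\nabla r\rangle$ is needed, so that, contrary to Theorem~\ref{3.1}, the bound on $\mathrm{Ric}_f$ alone suffices. Feeding in $\mathrm{Ric}_f\geq D(1+r)^{-\mu}$ produces the scalar inequality $\varphi_f'(t)\leq-D(1+t)^{-\mu}$ along $\gamma$, which, as in Theorem~\ref{3.1}, extends to hold weakly on all of $M$.

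Next I would integrate from a fixed radius $r_0$ to bound $\varphi_f(r)=\Delta_f r$, using that $\int_{r_0}^r(1+t)^{-\mu}\,dt$ behaves like $(1+r)^{1-\mu}/(1-\mu)$ for $0\leq\mu<1$, like $\log(1+r)$ for $\mu=1$, and converges to a constant for $\mu>1$. This gives an upper bound for $\Delta_f r$ in each regime: it decays to $-\infty$ when $D>0$, stays bounded when $D=0$, and grows at the corresponding rate when $D<0$. Since the weighted area density $\mathcal A_f(r,\xi)=e^{-f}\mathcal A(r,\xi)$ along the geodesic in direction $\xi$ satisfies $\partial_r\log\mathcal A_f=\Delta_f r=\varphi_f$, integrating the bound on $\varphi_f$ once more in $r$ and then over the unit sphere (via the coarea formula, the contribution past the cut locus being nonpositive exactly as in the derivation of (\ref{3.13a})) yields the stated estimates for $\mathrm{vol}_f(\partial B_r)$. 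For example, when $D>0$ and $0\leq\mu<1$ the bound $\Delta_f r\leq C-C'(1+r)^{1-\mu}$ integrates to $\log\mathcal A_f\leq C''-C_2(1+r)^{2-\mu}$, hence $\mathrm{vol}_f(\partial B_r)\leq C_1e^{-C_2r^{2-\mu}}$; the cases $\mu=1$ and $D\leq 0$ are identical in spirit.

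Finally, the estimates for $\mathrm{vol}_f(B_r)$ follow by integrating the area bounds through $\mathrm{vol}_f(B_r)=\int_0^r\mathrm{vol}_f(\partial B_t)\,dt$, together with the Laplace-type asymptotics $\int^r e^{g(t)}\,dt\sim e^{g(r)}/g'(r)$ valid for increasing $g$ with $g'\to+\infty$. This is precisely what produces the extra factors $(\log r)^{-1}$ (for $\mu=1$) and $r^{\mu-1}$ (for $0\leq\mu<1$) in case (iii), whereas in case (i) the super-exponential decay of the area makes the integral converge, giving the uniform bound $\mathrm{vol}_f(B_r)\leq C_3$. I expect the only genuinely delicate point to be the rigorous justification of the mean-curvature inequality across the cut locus and the passage from the pointwise bound on $\Delta_f r$ to the integral area estimate; this is handled by the same weak-formulation arguments invoked in Theorem~\ref{3.1} (cf. the cited results from \cite{PRS-Progress}). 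The remaining work is the bookkeeping of the three sign regimes for $D$ and the elementary asymptotic evaluation of the integrals.
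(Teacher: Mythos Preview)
Your proposal is correct and follows essentially the same route as the paper: derive $\varphi_f'\leq-\mathrm{Ric}_f(\dot\gamma,\dot\gamma)$ by discarding the $\varphi^2/(m-1)$ term, integrate once to bound $\Delta_f r$, pass to the weak formulation across the cut locus as in Theorem~\ref{3.1}, and then integrate twice more (once for $\mathrm{vol}_f(\partial B_r)$, once for $\mathrm{vol}_f(B_r)$), estimating the resulting integrals in each regime for $D$ and $\mu$. If anything, your write-up is more explicit than the paper's own proof about the case-by-case asymptotics of the integrals and the origin of the extra factors $(\log r)^{-1}$ and $r^{\mu-1}$ in case (iii).
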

\begin{proof}
Maintaining the notation introduced above, it follows from (\ref{3.8}), (\ref{3.9bis})
and the definition of $\mathrm{Ric}_f$ that if $\varphi_f = \Delta_f r \circ
\gamma$ then
\begin{equation*}
\varphi_f ' = \varphi - (f\circ \gamma)'' \leq - \frac {\varphi
^2}{m-1} -\mathrm{Ric}(\dot\gamma,\dot\gamma) - \mathrm{Hess}
f(\dot\gamma,\dot\gamma) \leq -\mathrm{Ric_f}
(\dot\gamma,\dot\gamma).
\end{equation*}
Thus, if we assume that $\mathrm{Ric_f}\geq \theta(r(x))$ and that the ball  $B_\epsilon$
is contained in the domain of the normal  coordinates at $o$,
setting $C=\max_{\partial B_\epsilon} \Delta_f r $ and
integrating between $\epsilon$ and $r(x)$ we obtain
\begin{equation*}
\Delta_f r (x) \leq  C  + \int_{\epsilon}^{r(x)} \theta (t) dt
\end{equation*}
pointwise in the $M\setminus (B_\epsilon\cup \mathrm{cut}(o))$
and weakly on  $M\setminus B_\epsilon$. From this, arguing as in
\cite{PRS-Progress} Theorem 2.4 we deduce that
\begin{equation*}
\mathrm{vol}_f (\partial B_r) \leq e^{C(r-r_o)+\int_{r_o}^r (\int_\epsilon^t \theta(s) ds) t}
\mathrm{vol}_f (\partial
B_{r_o}).
\end{equation*}
The conclusion now follows estimating the integral on the right hand
side for $\theta = D(1+r)^{-\mu}$.
\end{proof}

The second ingredient we shall need in the proof of Theorem \ref{A} is the following version of Theorem 5.2 in \cite{MRS}.
\begin{theorem}\label{prop_3.1}
Let $\left(M,\left\langle\, ,\right\rangle ,e^{-f}d\mathrm{vol}\right)$ be a complete weighted manifold. Given $\sigma,\mu\in\mathbb{R}$, let $\nu=\mu+2\left(\sigma-1\right)$ and assume that $\sigma\geq0$, $\sigma-\nu>0$. Let $u\in C^{1}\left(M\right)$ be a function such that
\begin{equation*}
%\label{3.15.1}
\hat{u}=\limsup_{r\left(x\right)\rightarrow +\infty}\frac{u\left(x\right)}{r\left(x\right)^{\sigma}}<+\infty
\end{equation*}
and suppose that
\begin{equation}\label{3.16}
\liminf_{r\rightarrow+\infty}\frac{\log vol_{f}\left(B_{r}\right)}{r^{\sigma-\nu}}=d_{0}<+\infty.
\end{equation}
Then given $\gamma\in \mathbb{R}$ such that
\[
\ \Omega_{\gamma}=\left\{x\in M \,:\, u\left(x\right)>\gamma\right\}\neq\emptyset
\]
we have
\[
\ \inf_{\Omega_{\gamma}}\left(1+r\left(x\right)\right)^{\mu}\Delta_{f}u\leq C\max\left\{\hat{u},0\right\}
\]
with
\begin{equation*}
 C=\left\{\begin{array}{lll}
0&\textrm{if}& \sigma=0\\
d_{0}\left(\sigma-\nu\right)^{2}&\textrm{if}& 0<\nu<\sigma\\
d_{0}\sigma\left(\sigma-\nu\right)&\textrm{if}& \sigma>0$, $\nu\geq\sigma.\\
\end{array}\right.
\end{equation*}
\end{theorem}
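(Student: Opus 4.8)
The plan is to adapt the scheme of Theorem 5.2 in \cite{MRS}, reducing the statement to a Caccioppoli-type integral estimate followed by a one-dimensional optimization driven by the volume growth. First I would argue by contradiction: if the conclusion fails there exist $\gamma$ with $\Omega_\gamma\neq\emptyset$ and a constant $\eta>C\max\{\hat u,0\}$ such that, weakly on $\Omega_\gamma$,
\[
\Delta_f u\geq\eta\left(1+r(x)\right)^{-\mu}.
\]
Set $v=(u-\gamma)_+$, a nonnegative locally Lipschitz function supported on $\overline{\Omega_\gamma}$ with $\nabla v=\nabla u$ on $\Omega_\gamma$ and $\nabla v=0$ elsewhere. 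For a radial cutoff $\phi=\phi(r)$ of compact support with $\phi\equiv1$ on some fixed ball $B_{R_0}$ meeting $\Omega_\gamma$, the function $v\phi^2$ is an admissible nonnegative test function vanishing off $\Omega_\gamma$, so the weak inequality together with the adjointness $\int\psi\,\Delta_f u\,e^{-f}=-\int\langle\nabla\psi,\nabla u\rangle e^{-f}$ yields
\[
\eta\int_M v\phi^2\left(1+r\right)^{-\mu}e^{-f}\leq-\int_M\phi^2|\nabla v|^2e^{-f}-2\int_M v\phi\langle\nabla\phi,\nabla v\rangle e^{-f}.
\]

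Next I would absorb the cross term. By Cauchy--Schwarz and Young, $2v\phi|\nabla\phi|\,|\nabla v|\leq\phi^2|\nabla v|^2+v^2|\nabla\phi|^2$, so the $|\nabla v|^2$ contributions cancel and one is left with the clean estimate
\[
\eta\int_M v\phi^2\left(1+r\right)^{-\mu}e^{-f}\leq\int_M v^2|\nabla\phi|^2e^{-f}.
\]
Here the hypothesis $\hat u<+\infty$ enters decisively: for every $\epsilon>0$ there is $R_0$ with $v\leq(\hat u_++\epsilon)r^\sigma$ on $M\setminus B_{R_0}$, where $\hat u_+=\max\{\hat u,0\}$. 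Since $\nabla\phi$ is supported outside $B_{R_0}$, this gives $v^2\leq(\hat u_++\epsilon)r^\sigma v$ on the support of the right-hand integrand, making both sides homogeneous of degree one in $v$ and producing $\eta\,I\leq(\hat u_++\epsilon)\,J$, with $I=\int v\phi^2(1+r)^{-\mu}e^{-f}$ and $J=\int v\,r^\sigma|\nabla\phi|^2e^{-f}$.

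Finally I would pass to a one-dimensional problem. Writing $W(r)=\int_{\partial B_r}v\,e^{-f}$ and using the coarea formula, $I=\int_0^\infty\phi^2(1+r)^{-\mu}W\,dr$ and $J=\int_0^\infty r^\sigma(\phi')^2W\,dr$, while $\int_0^RW\,dr=\int_{B_R}v\,e^{-f}\leq(\hat u_++\epsilon)R^\sigma\,\mathrm{vol}_f(B_R)$. Choosing a sequence $R_k\to+\infty$ along which $\log\mathrm{vol}_f(B_{R_k})/R_k^{\sigma-\nu}\to d_0$ and a radial profile $\phi$ tuned to the exponents, one estimates the ratio $J/I$, then lets $R_k\to\infty$ and $\epsilon\to0$ to force $\eta\leq C\max\{\hat u,0\}$, contradicting the choice of $\eta$. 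The relation $\nu=\mu+2(\sigma-1)$ is precisely what matches the weights in $I$ and $J$ to the exponential volume scale $r^{\sigma-\nu}$. The crux, and the step I expect to be hardest, is this final optimization: extracting the \emph{sharp} constant requires the correct test profile and a careful treatment of the regimes $0<\nu<\sigma$, giving $C=d_0(\sigma-\nu)^2$, and $\nu\geq\sigma$, giving $C=d_0\sigma(\sigma-\nu)$, together with the degenerate case $\sigma=0$ in which $v$ is bounded and $C=0$.
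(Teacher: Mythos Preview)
The paper does not supply its own proof of this statement: it is announced as ``the following version of Theorem~5.2 in \cite{MRS}'' and used without further argument. Your outline is precisely the \cite{MRS} scheme transported to the weighted operator $\Delta_f$, and the architecture you describe---contradiction, test by $(u-\gamma)_+\phi^2$, Young's inequality to absorb the gradient cross term, then a radial variational problem---is the correct one. So there is nothing in the paper to compare against beyond the citation, and your plan matches it.

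One caution about the step you yourself flag as the crux. The volume hypothesis \eqref{3.16} allows $\mathrm{vol}_f(B_r)$ to grow like $\exp\bigl(d_0 r^{\sigma-\nu}\bigr)$, so piecewise-linear or polynomial cutoffs cannot make the ratio $J/I$ small: the test profile must itself be exponential in $r^{\sigma-\nu}$, suitably truncated, and the three constants $0$, $d_0(\sigma-\nu)^2$, $d_0\sigma(\sigma-\nu)$ emerge from optimizing the exponent in that profile against $d_0$. Your phrase ``radial profile tuned to the exponents'' is compatible with this, but it hides all of the analytic content. In particular, in the case $\sigma=0$ with $\hat u_+>0$ your inequality reads $\eta I\leq(\hat u_++\epsilon)J$, and obtaining $C=0$ requires $J/I\to0$ along a sequence, not merely boundedness; this again forces the exponential cutoff. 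None of this is a gap in your strategy, but it is where the work lies, and it is exactly what \cite{MRS} supplies.
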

We are now ready to give a
\begin{proof}(of Theorem \ref{A}). First of all from Lemma \ref{lemma_2.1} and assumption (\ref{0.9}) we know that $\left|\nabla f\right|^{2}$ satisfies the differential inequality
\begin{equation}\label{3.17}
\Delta_{f}\left|\nabla f\right|^{2}\geq -2\lambda\left|\nabla f\right|^{2}
\end{equation}
on $M$. Furthermore, from (\ref{0.7}) we deduce
\begin{equation*}
%\label{3.18}
\left\langle \nabla r,\nabla f\right\rangle\geq-a\left(1+r\right)^{\frac{\sigma}{2}},
\end{equation*}
for some constant $a>0$. Using (\ref{0.8}) we apply Theorem
\ref{3.1} with the choice
$\theta\left(r\right)=a\left(1+r\right)^{\frac{\sigma}{2}}$ to
obtain
\[
\ vol_{f}\left(B_{r}\right)\leq D\int^{r}_{0}g\left(t\right)^{m-1}e^{\frac{2a}{\left(\sigma+2\right)\left(m-1\right)}\left(1+t\right)^{\frac{\sigma+2}{2}}}dt
\]
for some constant $B>0$ and where $g$ solves (\ref{3.7}) with
$G\left(r\right)=B^{2}\left(1+r^{2}\right)^{\frac{\alpha}{2}}$. By
Proposition 2.11 of \cite{PRS-Progress} it follows that, for $r>>1$,
$$
g(r) \leq C_1 \exp(C_2 e^{\frac{\alpha+2}2}),
$$
for some constant $C_1,C_2$.
%we can choose
%\begin{equation*}
% g\left(r\right)=\left\{\begin{array}{ll}
%e^{\frac{2B}{\alpha+2}\left(1+r\right)^{\frac{\alpha+2}{2}}}&\textrm{if $\alpha=0$}\\
%r^{-\frac{\alpha}{4}}e^{\frac{2B}{\alpha+2}r^{\frac{\alpha+2}{2}}}&\textrm{if $-2<\alpha<0$}.\\
%\end{array}\right.
%\end{equation*}
Thus, a simple computation shows that
%, for $\nu=\mu+2\left(\sigma-1\right)$ so that $\sigma-\nu=2-\mu-\sigma$
\[
\frac{\log \mathrm{vol}_{f}\left(B_{r}\right)}{r^{2-\mu-\sigma}}\leq C\left(r^{\mu+\sigma-1+\frac{\alpha}{2}}+r^{\mu-1+\frac{3}{2}\sigma}\right)
\]
for $r>>1$ and some constant $C>0$. Using (\ref{0.6}) we see that assumption (\ref{3.16})
of Proposition \ref{prop_3.1} is satisfied  with $\nu=\mu+2\left(\sigma-1\right)$
so that $\sigma-\nu=2-\mu-\sigma$
On the other hand, from (\ref{3.17}) and (\ref{0.8}) we have
\begin{equation}\label{3.19}
\left(1+r\left(x\right)\right)^{\mu}\Delta_{f}\left|\nabla f\right|^{2}\geq H\left|\nabla f\right|^{2}
\end{equation}
for some appropriate constant $H>0$. Assume that $\left|\nabla
f\right|$ is different from $0$ and choose $\gamma>0$ so that
\[
\ \Omega_{\gamma}=\left\{x\in M:\left|\nabla f\right|^{2}>\gamma\right\}\neq\emptyset.
\]
From (\ref{0.7}), (\ref{3.19}) and Theorem \ref{prop_3.1} we immediately obtain a contradiction.
\end{proof}

\section{A weighted Omori-Yau maximum principle}

Theorem \ref{prop_3.1} stated in the previous section represents a refined and generalized version of what is known in the literature as the weak maximum principle at infinity; \cite{PRS-PAMS}, \cite{PRS-Memoirs}. Indeed, taking $\sigma=\mu=0$,  we deduce that, for a smooth function $u$ on $M$ satisfying $\sup_Mu=u^*<+\infty$, there exists a sequence $\{x_n\}$ along which
$$u(x_n)>u^*-\frac{1}{n}, \text{ and }\Delta_fu(x_n)<\frac{1}{n}.$$

 In general, under volume growth conditions, nothing can be said about the behavior of the gradient of $u$. If,
 along the same sequence $\{x_n\}$, we have  that
 $$|\nabla u(x_n)|<\frac{1}{n}$$
 then we say that the full Omori-Yau maximum principle for the $f$-Laplacian holds.
The following result, which is a generalization of Theorem 1.9 in \cite{PRS-Memoirs}, gives function-theoretic sufficient conditions for a weighted Riemannian manifold $\left(M, \left\langle ,\right\rangle, e^{-f}d\rm{vol}\right)$ to satisfy the Omori-Yau maximum principle.

\begin{theorem}\label{th_OY_fLap}
Let $\left(M, \left\langle \,,\right\rangle, e^{-f}d\rm{vol}\right)$ be a weighted Riemannian manifold and assume that there exists a non-negative $C^{2}$ function $\gamma$ satisfying the following conditions
\begin{eqnarray}
\label{hp1} &\gamma\left(x\right)\rightarrow +\infty \textrm{\,\,\,as\,\,\,} x\rightarrow\infty\\
\label{hp2} &\exists A>0 \textrm{\,\,\,such that\,\,\,} \left|\nabla \gamma\right|\leq A\gamma^{\frac{1}{2}}\textrm{\,\,\, off a compact set}\\
\label{hp3}&\exists B>0 \textrm{\,\,\,such that\,\,\,} \Delta_{f}\gamma\leq B\gamma^{\frac{1}{2}}G\left(\gamma^{\frac{1}{2}}\right)^{\frac{1}{2}} \textrm{\,\,\,off a compact set}\\
\label{hp4} &\exists C>0 \textrm{\,\,\,such that\,\,\,} \left|\nabla f\right|\leq CG\left(\gamma^{\frac{1}{2}}\right)^{\frac{1}{2}}
\end{eqnarray}
where $G$ is a smooth function on $\left[0,+\infty\right)$ satisfying
\begin{equation}\label{hp5}
\begin{array}{lll}
&\left(i\right)\,G\left(0\right)>0&\left(ii\right)\,G^{\prime}\left(t\right)\geq 0 \textrm{\,\,on\,\,} \left[0,+\infty\right)\\
&\left(iii\right)G\left(t\right)^{-\frac{1}{2}}\notin L^{1}\left(+\infty\right)&\left(iv\right)\, \limsup_{t\rightarrow+\infty}\frac{tG\left(t^{\frac{1}{2}}\right)}{G\left(t\right)}<+\infty.
\end{array}
\end{equation}
Then, given any function $u\in C^{2}\left(M\right)$ with $u^{*}=\sup_{M}u<+\infty$, there exists a sequence $\left\{x_{n}\right\}_{n}\subset M$ such that
\begin{equation}\label{O-YMaxPr}
\begin{array}{llll}
&\left(i\right)\,u\left(x_{k}\right)>u^{*}-\frac{1}{k};&\left(ii\right)\,\left|\nabla u\left(x_{k}\right)\right|<\frac{1}{k};&\left(iii\right)\,\Delta_{f}u\left(x_{k}\right)<\frac{1}{k};
\end{array}
\end{equation}
for each $k\in\mathbb{N}$, i.e. the Omori-Yau maximum principle for $\Delta_{f}$
holds on $\left(M,\left\langle ,\right\rangle,
e^{-f}d\mathrm{vol}\right)$.
\end{theorem}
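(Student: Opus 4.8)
The plan is to reduce the statement to the construction of a single, carefully chosen barrier and then to run the standard perturbed-maximum argument. First I would build a $C^{2}$ function $\zeta$ that diverges at infinity but whose gradient and $f$-Laplacian are both bounded above. The natural candidate is the composition
\[
\zeta = \psi\bigl(\sqrt{\gamma}\bigr),\qquad \psi(t)=\int_0^t\frac{d\sigma}{\sqrt{G(\sigma)}},
\]
where $\gamma$ is the function furnished by the hypotheses. The non-integrability condition (\ref{hp5})(iii), $G^{-1/2}\notin L^{1}(+\infty)$, guarantees $\psi(t)\to+\infty$, so that, together with (\ref{hp1}) and completeness, $\zeta$ is proper and diverges at infinity.

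Next I would estimate the derivatives of $\zeta$ on the region where (\ref{hp2}) and (\ref{hp3}) hold. Writing $s=\sqrt{\gamma}$, a direct computation gives $|\nabla s|\le A/2$ and, using (\ref{hp3}),
\[
\Delta_f s=\frac{\Delta_f\gamma}{2\sqrt{\gamma}}-\frac{|\nabla\gamma|^{2}}{4\gamma^{3/2}}\le\frac{B}{2}\,G(s)^{1/2}.
\]
Since $\psi'(t)=G(t)^{-1/2}>0$ and, by the monotonicity (\ref{hp5})(ii), $\psi''(t)=-\tfrac12 G(t)^{-3/2}G'(t)\le 0$, the chain rule yields
\[
|\nabla\zeta|=\psi'(s)\,|\nabla s|\le\tfrac{A}{2}G(0)^{-1/2},\qquad \Delta_f\zeta=\psi'(s)\Delta_f s+\psi''(s)|\nabla s|^{2}\le\tfrac{B}{2},
\]
where the lower bound $G\ge G(0)>0$ coming from (\ref{hp5})(i) bounds the gradient and the favourable sign of $\psi''$ discards the second-order term. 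Thus $\zeta$ has the three desired features: it diverges, while $|\nabla\zeta|$ and $\Delta_f\zeta$ stay bounded above.

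With such a $\zeta$ in hand the conclusion follows from a soft argument. If $u$ attains $u^{*}$ the statement is trivial, so assume it does not. For each $\varepsilon>0$ the function $u-\varepsilon\zeta$ tends to $-\infty$ at infinity by properness of $\zeta$, hence attains a maximum at some $x_\varepsilon$. Comparing $u(x_\varepsilon)-\varepsilon\zeta(x_\varepsilon)$ with the value at a point where $u$ is close to $u^{*}$, and using $\zeta\ge 0$, one gets $u(x_\varepsilon)\to u^{*}$; since $u$ is not attained this forces $x_\varepsilon$ to diverge, hence to lie, for $\varepsilon$ small, in the region where the estimates above are valid. At an interior maximum $\nabla u(x_\varepsilon)=\varepsilon\nabla\zeta(x_\varepsilon)$ and $\Delta_f u(x_\varepsilon)\le\varepsilon\Delta_f\zeta(x_\varepsilon)$, so that $|\nabla u(x_\varepsilon)|\le\varepsilon\tfrac{A}{2}G(0)^{-1/2}$ and $\Delta_f u(x_\varepsilon)\le\varepsilon\tfrac{B}{2}$. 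Choosing $\varepsilon=\varepsilon_k\to 0$ and relabelling produces the sequence $\{x_k\}$ satisfying (\ref{O-YMaxPr}).

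The step to get right is the construction of $\zeta$: the boundedness of $\Delta_f\zeta$ is exactly where the structural assumptions on $G$ are consumed, namely the monotonicity (\ref{hp5})(ii), which kills the concave second-order term, and the precise matching between $\psi'$ and $\Delta_f s$, which makes $\psi'(s)G(s)^{1/2}$ bounded. I would also point out that the technical growth condition (\ref{hp5})(iv) and the drift bound (\ref{hp4}) appear not to be needed for the abstract argument sketched here; rather, they are the hypotheses under which (\ref{hp3}) can actually be verified for the model choice $\gamma=r(x)^{2}$ via the weighted Laplacian comparison of Theorem \ref{3.1}, i.e. in the geometric corollary Theorem \ref{OY}, where the Bakry--\'Emery lower bound $Ric_f\ge-(m-1)G(r)$ is the only curvature input.
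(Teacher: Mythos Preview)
Your argument is correct and follows a genuinely different route from the paper. The paper works with the quotient $F_k(x)=\bigl(u(x)-u(p)+1\bigr)\big/\varphi(\gamma(x))^{1/k}$, where $\varphi(t)=\exp\bigl(\int_0^t G^{-1/2}\bigr)$, and analyzes the maximum of $F_k$. Because the barrier is $\varphi(\gamma)$ rather than $\psi(\sqrt{\gamma})$, the key ratio is $\varphi'(\gamma)/\varphi(\gamma)=G(\gamma)^{-1/2}$, which does not match the scale $G(\gamma^{1/2})^{1/2}$ appearing in (\ref{hp3}); the growth condition (\ref{hp5})(iv) is invoked precisely to bridge this mismatch, yielding the bound $\varphi'/\varphi\le c\bigl(tG(t^{1/2})\bigr)^{-1/2}$. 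Moreover, the paper follows the unweighted template of \cite{PRS-Memoirs}: it first estimates $\Delta u(x_k)$ in terms of $\Delta\gamma(x_k)$, then converts $\Delta\gamma$ to $\Delta_f\gamma$ via (\ref{hp2}), (\ref{hp4}), and finally passes from $\Delta u$ to $\Delta_f u$ using again the drift bound (\ref{hp4}) on $|\nabla f|$.

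Your construction $\zeta=\psi(\sqrt{\gamma})$ is more economical: the factor $\psi'(s)=G(s)^{-1/2}$ cancels exactly against the $G(s)^{1/2}$ in $\Delta_f s$, so $\Delta_f\zeta\le B/2$ without any appeal to (\ref{hp5})(iv), and working directly with $\Delta_f$ at the maximum of $u-\varepsilon\zeta$ (where $\nabla(u-\varepsilon\zeta)=0$ forces $\Delta_f(u-\varepsilon\zeta)=\Delta(u-\varepsilon\zeta)\le 0$) dispenses with (\ref{hp4}) entirely. Your closing observation is therefore accurate: under your scheme the abstract statement holds assuming only (\ref{hp1})--(\ref{hp3}) and (\ref{hp5})(i)--(iii), while (\ref{hp4}) and (\ref{hp5})(iv) enter only when one wants to \emph{verify} (\ref{hp3}) for the concrete choice $\gamma=r^2$ under a Bakry--\'Emery lower bound, as in Theorem~\ref{OY} and its corollary. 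The paper's approach has the virtue of being a direct transcription of the classical Omori--Yau machinery; yours isolates the single barrier more cleanly and shows that two of the hypotheses are, for the abstract result, superfluous.
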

The proof of this theorem is similar to that of Theorem 1.9 in \cite{PRS-Memoirs}
and we refer to this one for more details.
\begin{proof}
We define the function
\begin{equation*}
\varphi\left(t\right)=e^{\int_{0}^{t} G\left(s\right)^{-\frac{1}{2}}ds}.
\end{equation*}
Proceeding as in \cite{PRS-Memoirs} and using assumption (\ref{hp5}) (iv), we have that
\begin{equation}\label{OY1}
0\leq\frac{\varphi^{\prime}\left(t\right)}{\varphi\left(t\right)}
<c\left(tG\left(t^{\frac{1}{2}}\right)\right)^{-\frac{1}{2}}
\end{equation}
for some constant $c>0$. Next, we fix a point $p\in M$ and, $\forall k\in \mathbb{N}$,
 we define
\begin{equation*}
F_{k}(x)=\frac{u\left(x\right)-u\left(p\right)+1}
{\varphi\left(\gamma\left(x\right)\right)^{\frac{1}{k}}}.
\end{equation*}
Then $F_{k}\left(p\right)=1/\varphi\left(\gamma\left(p\right)\right)^{1/k}>0$. Moreover,
since $u^{*}<+\infty$ and $\varphi\left(\gamma\left(x\right)\right)\rightarrow+\infty$
as $x\rightarrow+\infty$, we have $\limsup_{x\rightarrow\infty}F_{k}\left(x\right)\leq0$.
Thus, $F_{k}$ attains a positive absolute maximum at $x_{k}\in M$.
Iterating this procedure, we produce a sequence $\left\{x_{k}\right\}$.
It is shown in \cite{PRS-Memoirs} that
\begin{equation*}
%\label{OY2}
\limsup_{t\rightarrow+\infty}u\left(x_{k}\right)=u^{*},
\end{equation*}
and by passing to a subsequence if necessary, we may assume that
\begin{equation*}
%\label{OY3}
\lim_{k\rightarrow+\infty}u\left(x_{k}\right)=u^{*}.
\end{equation*}
If $\left\{x_{k}\right\}$ remains in a compact set, then $x_{k}\rightarrow\bar{x}$
as $k\rightarrow+\infty$ and the sequence $z_{k}=\bar{x}$, for each $k$, clearly
satisfies (\ref{O-YMaxPr}).
We only need to consider the case when $x_{k}\rightarrow\infty$ so that, according
to (\ref{hp1}), $\gamma\left(x_{k}\right)\rightarrow+\infty$. Since $f_{k}$ attains
a positive maximum at $x_{k}$ we have
\begin{equation*}
%\label{OY4}
\begin{array}{lll}
\left(i\right)\,\left(\nabla \log F_{k}\right)\left(x_{k}\right)=0;
&\left(ii\right)\Delta_{f}
\left(\log F_{k}\right)\left(x_{k}\right)
=\Delta \left(\log F_{k}\right)\left(x_{k}\right)\leq 0.
\end{array}
\end{equation*}
Reasoning as in \cite{PRS-Memoirs} we have
\begin{align*}
%\label{OY5}
\Delta u \left(x_{k}\right) \leq \frac{u\left(x_{k}\right)-u\left(p\right)+1}{k}
&\left\{\frac{\varphi^{\prime}\left(\gamma\left(x_{k}\right)\right)}
{\varphi\left(\gamma\left(x_{k}\right)\right)}
\Delta\left(\gamma\right)\left(x_{k}\right)\right.\\
 \nonumber&\left.+\frac{1}{k}\left(\frac{\varphi^{\prime}
 \left(\gamma\left(x_{k}\right)\right)}
 {\varphi\left(\gamma\left(x_{k}\right)\right)}\right)^{2}
 \left|\nabla\gamma\left(x_{k}\right)\right|^{2}\right\}.
\end{align*}
Assume now that (\ref{hp2}) and (\ref{hp3}) hold so that they
hold at $x_{k}$ for sufficiently large $k$. A computation shows that
\begin{equation*}
%\label{OY6}
\left|\nabla u\left(x_{k}\right)\right|\leq \frac{a}{k}
\cdot\frac{u\left(x_{k}\right)-u\left(p\right)+1}
{G\left(\gamma\left(x_{k}\right)^{1/2}\right)^{1/2}}
\end{equation*}
for some costant $a>0$.
Note that from (\ref{hp2}), (\ref{hp3}), (\ref{hp4}) we have
\begin{align}\label{OY7}
\Delta \gamma\left(x_{k}\right)=&\Delta_{f}\gamma\left(x_{k}\right)
+\left\langle \nabla\gamma\left(x_{k}\right),\nabla f\right\rangle\\
\nonumber\leq&\Delta_{f}\gamma\left(x_{k}\right)
+\left|\nabla\gamma\left(x_{k}\right)\right|\left|\nabla f\right|\\
\nonumber\leq&B\gamma^{1/2}G\left(\gamma^{1/2}\right)^{1/2}
+ AC \gamma^{1/2}G\left(\gamma^{1/2}\right)^{1/2}.
\end{align}
Thus, using (\ref{hp4}), (\ref{OY1}) and (\ref{OY7}) we obtain
\begin{align*}
\Delta_{f}u\left(x_{k}\right)=&\Delta u\left(x_{k}\right)
-\left\langle \nabla u,\nabla f\right\rangle\left(x_{k}\right)\\
\leq&\frac{u\left(x_{k}\right)-u\left(p\right)+1}{k}
\left\{\frac{c}{\gamma^{1/2}G\left(\gamma^{1/2}\right)^{1/2}}D\gamma^{1/2}
G\left(\gamma^{1/2}\right)^{1/2}\right.\\
&\left.+\frac{1}{k}\cdot\frac{c^{2}}{\gamma G\left(\gamma^{1/2}\right)}A^{2}
\gamma\right\}+\frac{a}{k}\cdot\frac{u\left(x_{k}\right)
-u\left(p\right)+1}{G\left(\gamma^{1/2}\right)^{1/2}} CG\left(\gamma^{1/2}\right)^{1/2},
\end{align*}
and the RHS tends to zero as $k\rightarrow+\infty$.
\end{proof}
We now consider two important situations where  Theorem \ref{th_OY_fLap} applies.
First, we recover a result by M. Fern\'andez-L\'opez and E. Garc\'ia R\'io, \cite{LR-Preprint}.
\begin{corollary}
\label{corollary_Lopez}
Let $\left(M,\left\langle \,,\right\rangle,\nabla f\right)$ be a gradient shrinking
Ricci soliton. Then, the Omori-Yau maximum principle for the $f$-Laplacian holds.
\end{corollary}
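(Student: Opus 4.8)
The plan is to deduce the corollary directly from Theorem \ref{th_OY_fLap} by taking the exhaustion function to be (a shift of) the potential $f$ itself and the comparison function to be $G(t)=1+t^{2}$. For a complete gradient shrinking soliton the equation reads $Ric+\mathrm{Hess}(f)=\lambda\langle\,,\rangle$ with $\lambda>0$ a \emph{constant}, so $Ric_{f}=\lambda\langle\,,\rangle\geq 0$. First I would record the two classical soliton identities that follow by specializing Section 2 to $\lambda=\mathrm{const}$: tracing (\ref{2.2}) gives $\Delta f=m\lambda-S$, while combining the soliton equation (\ref{2.2}) with (\ref{2.7}) (which for $\nabla\lambda=0$ reads $S_{i}=2f_{k}R_{ki}$) shows that $\nabla\bigl(|\nabla f|^{2}+S-2\lambda f\bigr)=0$, i.e. Hamilton's identity $|\nabla f|^{2}+S=2\lambda f+c_{0}$ holds for some constant $c_{0}\in\mathbb{R}$.

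Next I would invoke the standard geometric input that, on a complete shrinking soliton, the scalar curvature is bounded below (indeed $S\geq 0$, see \cite{Ch}, \cite{Zh-complete}) and that $f$ is proper with $f(x)\to+\infty$ as $x\to\infty$; normalizing $f$ by an additive constant so that $f\geq 1$ (which changes neither $\nabla f$ nor $\Delta_{f}f$), we may set $\gamma=f$ and obtain (\ref{hp1}). Hamilton's identity together with the lower bound on $S$ then yields, off a compact set, $|\nabla f|^{2}=2\lambda f-S+c_{0}\leq C f=C\gamma$, which is precisely (\ref{hp2}) with exponent $1/2$ and simultaneously gives $|\nabla f|\leq C\gamma^{1/2}\leq C(1+\gamma)^{1/2}=C\,G(\gamma^{1/2})^{1/2}$, verifying (\ref{hp4}). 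For (\ref{hp3}) I would compute
\[
\Delta_{f}\gamma=\Delta f-\left|\nabla f\right|^{2}=\left(m\lambda-S\right)-\left(2\lambda f-S+c_{0}\right)=m\lambda-c_{0}-2\lambda f,
\]
which is bounded above by a constant; since the right-hand side $B\gamma^{1/2}G(\gamma^{1/2})^{1/2}=B\gamma^{1/2}(1+\gamma)^{1/2}$ grows linearly in $\gamma$, (\ref{hp3}) holds for $\gamma$ large.

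It then remains only to check that $G(t)=1+t^{2}$ satisfies (\ref{hp5}): one has $G(0)=1>0$ and $G'(t)=2t\geq 0$; moreover $G(t)^{-1/2}=(1+t^{2})^{-1/2}\sim t^{-1}$ is not integrable at infinity, and finally $\limsup_{t\to+\infty}tG(t^{1/2})/G(t)=\limsup_{t\to+\infty}t(1+t)/(1+t^{2})=1<+\infty$. With all hypotheses in place, Theorem \ref{th_OY_fLap} applies and delivers the Omori-Yau maximum principle for $\Delta_{f}$. The only genuinely nontrivial ingredient is the properness (and quadratic growth) of the potential $f$ together with the lower bound on $S$; both rely on completeness, whereas everything else is an elementary verification of the structural conditions (\ref{hp1})--(\ref{hp5}).
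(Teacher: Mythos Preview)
Your proposal is correct and follows essentially the same route as the paper: both take $\gamma=f$ (suitably normalized) with a quadratic comparison function $G$ and verify the hypotheses of Theorem~\ref{th_OY_fLap}, the key geometric inputs being the properness of $f$ and the gradient estimate $|\nabla f|^{2}\leq Cf$ coming from Hamilton's identity together with $S\geq 0$. The paper simply cites \cite{LR-Preprint} for these facts and uses $G(t)=t^{2}$, whereas you spell out the derivation and take $G(t)=1+t^{2}$; the latter choice is actually cleaner since it satisfies $G(0)>0$ in (\ref{hp5})(i) without further comment.
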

\begin{proof}
Set $G(t)=t^{2}$ and $\gamma=f$. It is proved in  \cite{LR-Preprint} that
\begin{eqnarray}
     &\gamma\rightarrow +\infty \textrm{\,\,\,as\,\,\,} x\rightarrow\infty\nonumber\\
     &\left|\nabla f\right|=\left|\nabla \gamma\right|\leq\sqrt{f}=\sqrt{\gamma}\nonumber\\
     &\Delta_{f}\gamma =\Delta f -\left|\nabla f\right|^{2}
\leq\Delta f\leq \gamma^{1/2}G\left(\gamma^{1/2}\right)^{1/2},\nonumber
\end{eqnarray}
that is, conditions (\ref{hp1}), (\ref{hp2}), (\ref{hp3}) and (\ref{hp4}) of
Theorem \ref{th_OY_fLap} are satisfied.
\end{proof}
The second situation concerns with general weighted manifolds whose Bakry-Emery Ricci
tensor is suitably controlled. Clearly it represents an  extension of
Corollary~\ref{corollary_Lopez} to non necessarily shrinking almost
solitons.
\begin{corollary}
Let $\left(M, \left\langle ,\right\rangle,e^{-f}d\mathrm{vol}\right)$ be a
complete weighted manifold such that
    \begin{equation*}
    %\label{OY8}
    Ric_{f}\geq-\left(m-1\right)G\left(r\right)\left\langle ,\right\rangle
    \end{equation*}
    for a smooth positive function $G$ satisfyng (\ref{hp5}), even at the origin.
    Assume also that
    \begin{equation*}
    %\label{OY9}
    \left|\nabla f\right|\leq CG\left(r\right)^{1/2}
    \end{equation*}
Then, the Omori-Yau maximum principle for the $f$-Laplacian holds on $M$.
\end{corollary}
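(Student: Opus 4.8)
The plan is to deduce the statement from Theorem~\ref{th_OY_fLap} by exhibiting an admissible function $\gamma$, and the natural choice is $\gamma = r^2$, the square of the distance from the fixed origin $o$. Since $\gamma^{1/2} = r$, the quantities $G(\gamma^{1/2})^{1/2}$ appearing in \eqref{hp3} and \eqref{hp4} collapse to $G(r)^{1/2}$, and everything reduces to estimating $\Delta_f(r^2)$. First I would check the three routine requirements: \eqref{hp1} holds because $(M,\langle\,,\rangle)$ is complete, whence $r(x)\to+\infty$ as $x\to\infty$; \eqref{hp2} holds with $A=2$, since $|\nabla\gamma| = 2r|\nabla r| = 2r = 2\gamma^{1/2}$; and \eqref{hp4} is precisely the standing hypothesis $|\nabla f|\le C\,G(r)^{1/2} = C\,G(\gamma^{1/2})^{1/2}$.

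The core of the argument is the verification of \eqref{hp3}. From $|\nabla f|\le C\,G(r)^{1/2}$ I would first record that
\[
\langle \nabla r, \nabla f\rangle \ge -|\nabla f| \ge -C\,G(r)^{1/2} =: -\theta(r),
\]
where $\theta(r) = C\,G(r)^{1/2}$ is continuous and non-decreasing thanks to \eqref{hp5}(ii); hence assumption \eqref{3.2} of Theorem~\ref{3.1} holds. Invoking that theorem I obtain the weighted Laplacian comparison \eqref{3.13}, i.e. $\Delta_f r \le (m-1)\,h'/h + \theta(r)$, with $h$ the solution of the Cauchy problem \eqref{3.7}. Since $\Delta_f(r^2) = 2r\,\Delta_f r + 2|\nabla r|^2 = 2 + 2r\,\Delta_f r$, it only remains to bound $h'/h$ by a multiple of $G(r)^{1/2}$.

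For this I would argue directly on the ODE $h'' = G h$. Differentiating the energy $E = (h')^2 - G h^2$ gives $E' = -G' h^2 \le 0$ by \eqref{hp5}(ii), so $E(r)\le E(0) = 1$ and therefore $(h')^2 \le 1 + G h^2$, that is $h'/h \le 1/h + G^{1/2}$. Because $h'' \ge 0$ forces $h' \ge h'(0) = 1$ and thus $h(r)\ge r\to+\infty$, while $G^{1/2}\ge G(0)^{1/2}>0$ by \eqref{hp5}(i), one gets $h'/h \le 2\,G(r)^{1/2}$ off a compact set. Combining the two estimates, $\Delta_f(r^2) \le 2 + 2r\bigl(2(m-1)+C\bigr)G(r)^{1/2}\le B\,r\,G(r)^{1/2} = B\,\gamma^{1/2}G(\gamma^{1/2})^{1/2}$ for $r\gg 1$, which is exactly \eqref{hp3}. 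Theorem~\ref{th_OY_fLap} then yields the Omori--Yau maximum principle for $\Delta_f$.

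The one delicate point, which I would address explicitly, is that $\gamma=r^2$ is not of class $C^2$ across the cut locus, whereas Theorem~\ref{th_OY_fLap} is phrased for a $C^2$ exhaustion. This is the same difficulty met in Theorem~\ref{3.1}, where \eqref{3.13} is shown to hold weakly, in the barrier sense, on all of $M$. Accordingly, in the maximum-principle scheme one applies Calabi's trick at each critical point $x_k$: replacing $r$ by the distance from a point displaced slightly along a minimizing geodesic produces a smooth upper barrier near $x_k$ for which $\Delta_f$ is only smaller, so the inequalities above remain valid where they are used. With this standard modification the hypotheses of Theorem~\ref{th_OY_fLap} are met and the conclusion follows.
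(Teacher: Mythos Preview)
Your proof is correct and follows essentially the same route as the paper: choose $\gamma=r^{2}$, use the weighted Laplacian comparison \eqref{3.13} from Theorem~\ref{3.1} to estimate $\Delta_{f}r$, and then feed this into Theorem~\ref{th_OY_fLap}. You are in fact more careful than the paper on two points it leaves implicit---the bound $h'/h\le \mathrm{const}\cdot G(r)^{1/2}$ (which you derive via the energy $E=(h')^{2}-Gh^{2}$) and the cut-locus regularity issue handled by Calabi's trick.
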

\begin{proof}
    Let $h$ be as in Theorem \ref{3.1}. Then
    \begin{align*}
    \Delta_{f}r\leq&(m-1)\frac{h^{\prime}}{h}+\left|\nabla r\right|\left|\nabla f\right|\\
    \leq&\left(m-1\right)\frac{h^{\prime}}{h}+CG\left(r\right)^{1/2}\leq DG\left(r\right)^{1/2},
    \end{align*}
    and thus
    \begin{align*}
    \Delta_{f}r^{2}=&2+2r\Delta_{f}r\\
    \leq&2+2rG\left(r\right)^{1/2}\\
    \leq&CrG\left(r\right)^{1/2},
    \end{align*}
    and the hypotheses of Theorem \ref{th_OY_fLap} are satisfied with $\gamma=r^{2}$.
\end{proof}

\section{Triviality in the presence of weighted Poincar\'e-Sobolev inequalities}
This section aims to prove Theorem \ref{C} of the Introduction. In a sense it
can be considered as an isolation result for the soliton function of almost
solitons with $L^p$ soliton structure. Again we need a preliminary result.
The next proposition can be deduced by simple modifications to the proof of
Theorem 9.12 in \cite{PRS-Progress}.
\begin{proposition}\label{prop_4.1}
Let $\left(M,\left\langle\, ,\right\rangle, e^{-f}d\mathrm{vol}\right)$ be
a complete weighted manifold and assume that, for some $0\leq\alpha<1$, the
Poincar\'e-Sobolev inequality
\begin{equation}\label{4.1}
\int_{M}\left|\nabla \varphi\right|^{2}e^{-f}d\mathrm{vol}
\geq S\left(\alpha\right)^{-1}\left\{\int_{M}\left|\varphi\right|^{\frac{2}{1-\alpha}}
e^{-f}d\mathrm{vol}
\right\}^{1-\alpha},
\end{equation}
holds for all $\varphi\in C^{\infty}_{0}\left(M\right)$ and some
constant $S\left(\alpha\right)>0$. Let $B\in\mathbb{R}$,
$q\left(x\right)\in C^{0}\left(M\right)$ and let $\psi\in
Lip_{loc}\left(M\right)$ be a non-negative weak solution of
\begin{equation*}
%\label{4.2}
\psi\Delta_{f}\psi+q\left(x\right)\psi^{2}\geq -B\left|\nabla \psi\right|^{2}
\end{equation*}
on $M$. Assume that, for some
\begin{equation*}
%\label{4.3}
\sigma>\max\left\{1,B+1\right\}
\end{equation*}
we have
\begin{equation*}
%\label{4.4}
\int_{B_{r}}\psi^{\sigma}e^{-f}=o\left(r^{2}\right)
\end{equation*}
as $r\rightarrow+\infty$. Then either $\psi\equiv0$ or otherwise
\begin{equation*}
%\label{4.5}
\left\|q_{+}\left(x\right)\right\|_{L^{\frac{1}{\alpha}}\left(M,e^{-f}\right)}
\geq\frac{4}{S\left(\alpha\right)}\frac{p-1}{p^{2}}.
\end{equation*}
\end{proposition}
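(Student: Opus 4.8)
The plan is to establish the stated dichotomy by a test-function argument that couples the differential inequality with the Poincar\'e--Sobolev inequality (\ref{4.1}), in the spirit of Theorem 9.12 of \cite{PRS-Progress}. Suppose $\psi\not\equiv 0$; I will show that then $\|q_+\|_{L^{1/\alpha}}$ cannot fall below the stated constant. Set $u=\psi^{\sigma/2}$, so that the growth hypothesis reads $\int_{B_r}u^2e^{-f}=\int_{B_r}\psi^{\sigma}e^{-f}=o(r^2)$. For a cutoff $\varphi\in C^{\infty}_0(M)$, $\varphi\geq 0$, I would test the weak inequality $\psi\Delta_f\psi+q\psi^2\geq -B|\nabla\psi|^2$ against $\rho=\varphi^2\psi^{\sigma-2}$. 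Integrating by parts against the weight $e^{-f}$ (using symmetry of $\Delta_f$ on $L^2(e^{-f})$) and expanding $\nabla\rho$, the zeroth-order term yields $\int q\varphi^2\psi^{\sigma}e^{-f}$, while the first-order terms combine into $(\sigma-1-B)\int\varphi^2\psi^{\sigma-2}|\nabla\psi|^2e^{-f}$ together with a cross term in $\langle\nabla\varphi,\nabla\psi\rangle$. Here the hypothesis $\sigma>B+1$ is exactly what makes the coefficient $\sigma-1-B$ of this good gradient term strictly positive, which is the structural reason for that restriction.

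Next I would pass to $u=\psi^{\sigma/2}$, for which $\varphi^2\psi^{\sigma-2}|\nabla\psi|^2=\tfrac{4}{\sigma^2}\varphi^2|\nabla u|^2$ and $\varphi^2\psi^{\sigma}=\varphi^2u^2$, and bound $\int q\varphi^2u^2e^{-f}\leq\int q_+\varphi^2u^2e^{-f}$. Applying H\"older with the conjugate exponents $1/\alpha$ and $1/(1-\alpha)$ gives $\int q_+(\varphi u)^2e^{-f}\leq\|q_+\|_{L^{1/\alpha}}\|\varphi u\|_{2/(1-\alpha)}^2$, which is precisely the quantity controlled by the right-hand side of (\ref{4.1}). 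Applying (\ref{4.1}) to $\varphi u$ bounds $\|\varphi u\|_{2/(1-\alpha)}^2$ from above by $S(\alpha)\int|\nabla(\varphi u)|^2e^{-f}$, and expanding $|\nabla(\varphi u)|^2=\varphi^2|\nabla u|^2+2\varphi u\langle\nabla\varphi,\nabla u\rangle+u^2|\nabla\varphi|^2$ ties everything back to the good gradient term plus terms carrying $|\nabla\varphi|^2$.

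Combining the three ingredients and disposing of the cross terms by Young's inequality with small parameters, one reaches an inequality of the form $\left(1-S(\alpha)\Lambda\cdot\tfrac{\sigma^2}{4(\sigma-1-B)}(1+o(1))\right)\|\varphi u\|_{2/(1-\alpha)}^2\leq C\int\psi^{\sigma}|\nabla\varphi|^2e^{-f}$, where $\Lambda=\|q_+\|_{L^{1/\alpha}}$ and the $o(1)$ collects the Young parameters. Sending those parameters to their degenerate values makes the correction vanish, so the bracket is positive as soon as $\Lambda<\tfrac{4}{S(\alpha)}\tfrac{\sigma-1-B}{\sigma^2}$, which is the stated threshold ($B=0$ in all the applications, giving $\tfrac{4}{S(\alpha)}\tfrac{\sigma-1}{\sigma^2}$). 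Taking the standard cutoff with $\varphi\equiv 1$ on $B_r$, $\varphi\equiv 0$ off $B_{2r}$ and $|\nabla\varphi|\leq c/r$, the right-hand side is at most $\tfrac{c^2}{r^2}\int_{B_{2r}}\psi^{\sigma}e^{-f}=o(1)$ by the growth hypothesis; letting $r\to\infty$ forces $\|u\|_{2/(1-\alpha)}=0$ on all of $M$, i.e. $\psi\equiv 0$, contradicting the assumption. Hence if $\psi\not\equiv 0$ the reverse bound on $\Lambda$ must hold.

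The main obstacle is twofold. Analytically, the test function $\varphi^2\psi^{\sigma-2}$ need not be admissible when $\sigma<2$ and $\psi$ vanishes, so I would regularize (replacing $\psi$ by $\psi+\eta$, or using $(\psi-\delta)_+$), derive the inequality for the regularization, and pass to the limit; this is where the local Lipschitz regularity of $\psi$ and the weak formulation must be handled carefully, exactly as in \cite{PRS-Progress}. Quantitatively, the real point is to extract the \emph{sharp} constant $\tfrac{4(\sigma-1-B)}{\sigma^2}$ rather than a lossy one: this requires treating the two cross terms (from $\nabla\rho$ and from $\nabla(\varphi u)$) with \emph{optimized} Young parameters and checking that, as those parameters degenerate, the coefficient of $\|\varphi u\|_{2/(1-\alpha)}^2$ tends to exactly $S(\alpha)\Lambda\sigma^2/4(\sigma-1-B)$. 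Everything else is a routine exhaustion of $M$ by the cutoffs.
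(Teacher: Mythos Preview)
Your proposal is correct and is precisely the approach the paper has in mind: the paper does not give an independent proof but simply states that the proposition ``can be deduced by simple modifications to the proof of Theorem~9.12 in \cite{PRS-Progress}'', and your outline is exactly that modification---replacing $\Delta$ by $\Delta_f$ and $d\mathrm{vol}$ by $e^{-f}d\mathrm{vol}$ throughout, testing with $\varphi^2\psi^{\sigma-2}$, passing to $u=\psi^{\sigma/2}$, combining H\"older with (\ref{4.1}), and exhausting with cutoffs under the $o(r^2)$ growth. You also correctly identify the sharp constant as $\tfrac{4(\sigma-1-B)}{S(\alpha)\sigma^2}$ (the ``$\tfrac{p-1}{p^2}$'' in the statement is a notational slip for $\sigma$, cf.\ the proof of Theorem~\ref{th_4.1} where they set $p=\sigma$, $B=0$), and the two technical caveats you flag---the regularization of $\psi^{\sigma-2}$ when $\sigma<2$ and the optimized Young parameters needed for sharpness---are exactly the points handled in \cite{PRS-Progress}.
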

\begin{remark}
\rm{
As we already observed in the Introduction, the validity of a Poincar\'e-Sobolev
inequality forces $M$ to have infinite weighted volume. Furthermore, if $\alpha=0$,
(\ref{4.1}) means precisely that the manifold has positive weighted spectral radius.
}
\end{remark}
An immediate consequence of Proposition \ref{prop_4.1} is the following
\begin{theorem}\label{th_4.1}
Let $\left(M,\left\langle\, ,\right\rangle,\nabla f\right)$ be a complete, gradient Ricci
almost soliton with soliton function $\lambda$. Suppose either $m=2$ or otherwise
\begin{equation}\label{4.6}
\left\langle \nabla f,\nabla\lambda\right\rangle\leq0.
\end{equation}
Assume the validity of the Poincar\'e-Sobolev inequality (\ref{4.1}) for some
$0\leq\alpha<1$ and suppose that for some $p>1$
\begin{equation*}
%\label{4.7}
\int_{B_{r}}\left|\nabla f\right|^{p}e^{-f}=o\left(r^{2}\right)
\end{equation*}
as $r\rightarrow+\infty$. Then either the almost soliton is trivial or
\begin{equation}\label{4.8}
\left\|\lambda_{+}\left(x\right)\right\|_{L^{\frac{1}{\alpha}}\left(M,e^{-f}\right)}
\geq\frac{4}{S\left(\alpha\right)}\frac{\sigma-1-B}{\sigma^{2}}.
\end{equation}
\end{theorem}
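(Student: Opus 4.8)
The plan is to obtain the statement as a direct application of Proposition \ref{prop_4.1}, taking $\psi=\left|\nabla f\right|$ as the test function together with the weight $q=\lambda_{+}$. First I would record that, since $f\in C^{\infty}\left(M\right)$, the function $\psi=\left|\nabla f\right|=\sqrt{\left\langle\nabla f,\nabla f\right\rangle}$ is non-negative and locally Lipschitz on $M$, hence admissible in Proposition \ref{prop_4.1}; the Poincar\'e--Sobolev inequality (\ref{4.1}) required there is available by hypothesis.

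The key step is to produce the differential inequality demanded by Proposition \ref{prop_4.1}. This is precisely inequality (\ref{2.12}) of Corollary \ref{cor_2.11},
\begin{equation*}
\left|\nabla f\right|\Delta_{f}\left|\nabla f\right|\geq-\lambda\left|\nabla f\right|^{2}-\left(m-2\right)\left\langle\nabla\lambda,\nabla f\right\rangle ,
\end{equation*}
and I would dispose of the last term through the stated dichotomy: if $m=2$ the factor $m-2$ vanishes, whereas if $m\geq3$ assumption (\ref{4.6}) gives $-\left(m-2\right)\left\langle\nabla\lambda,\nabla f\right\rangle\geq0$. In either case one is left with $\psi\Delta_{f}\psi\geq-\lambda\psi^{2}$, and since $\lambda\leq\lambda_{+}$ and $\psi^{2}\geq0$ this upgrades to
\begin{equation*}
\psi\Delta_{f}\psi+\lambda_{+}\psi^{2}\geq\psi\Delta_{f}\psi+\lambda\psi^{2}\geq0\geq-B\left|\nabla\psi\right|^{2}
\end{equation*}
weakly on $M$, for every $B\geq0$. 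Thus $\psi$ is a non-negative weak solution of the inequality appearing in Proposition \ref{prop_4.1} with $q=\lambda_{+}$.

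It then remains only to match the integrability requirement. Taking $\sigma=p$ and $B=0$, the constraint $\sigma>\max\left\{1,B+1\right\}$ reduces to $p>1$, which holds, and the condition $\int_{B_{r}}\psi^{\sigma}e^{-f}=o\left(r^{2}\right)$ is exactly the assumed decay $\int_{B_{r}}\left|\nabla f\right|^{p}e^{-f}=o\left(r^{2}\right)$. Proposition \ref{prop_4.1} then yields the alternative: either $\psi=\left|\nabla f\right|\equiv0$, so that $f$ is constant and the almost soliton is trivial, or
\begin{equation*}
\left\|\lambda_{+}\right\|_{L^{\frac{1}{\alpha}}\left(M,e^{-f}\right)}\geq\frac{4}{S\left(\alpha\right)}\frac{\sigma-1-B}{\sigma^{2}},
\end{equation*}
which is (\ref{4.8}) for the present choice of $\sigma$ and $B$.

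The only point requiring care, which I expect to be the main (though mild) obstacle, is the justification that (\ref{2.12}) holds weakly across the zero set of $\nabla f$, where $\psi$ fails to be smooth. This is exactly the mechanism already built into Corollary \ref{cor_2.11}, where Kato's inequality $\left|Hess\left(f\right)\right|^{2}\geq\left|\nabla\left|\nabla f\right|\right|^{2}$ is invoked distributionally; the same argument secures both the weak inequality and the local Lipschitz regularity of $\psi$, so no genuine difficulty remains.
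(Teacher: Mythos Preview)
Your proof is correct and follows essentially the same route as the paper: derive $\psi\Delta_f\psi+\lambda\psi^2\geq 0$ from Corollary~\ref{cor_2.11} together with the hypothesis $m=2$ or (\ref{4.6}), then apply Proposition~\ref{prop_4.1} with $\sigma=p$, $B=0$. The only cosmetic difference is that you take $q=\lambda_+$ whereas the paper takes $q=\lambda$, but since Proposition~\ref{prop_4.1} concludes about $q_+$ this makes no difference, and your added remarks on the Lipschitz regularity of $\psi$ and the weak validity of (\ref{2.12}) are welcome care that the paper leaves implicit.
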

\begin{proof}
From Corollary \ref{cor_2.11} and (\ref{4.6}) we deduce
\begin{equation*}
%\label{4.9}
\left|\nabla f\right|\Delta_{f}\left|\nabla f\right|+\lambda\left|\nabla f\right|^{2}\geq0.
\end{equation*}
Thus we can apply Proposition \ref{prop_4.1} with $p=\sigma$,
$B=0$, $q\left(x\right)=\lambda\left(x\right)$ to deduce that either the almost
soliton is trivial
 or (\ref{4.8}) holds.
\end{proof}
Theorem \ref{C} now follows immediately from Theorem \ref{th_4.1}.
Indeed, since $m\geq 3$, a trivial almost soliton is necessarily Einstein and the soliton
function $\lambda$ must be constant. On the other hand, the Poincar\'e-Sobolev
inequality implies that $M$ has infinite volume, therefore $\lambda_+=0$.

\section{Scalar curvature estimates}
In this section we prove Theorem \ref{B} stated in the Introduction.
Namely, we show that under a pointwise control on the soliton function,
the scalar curvature of an almost soliton is bounded from below. Furthermore,
the lower bound of the scalar curvature can be estimated both from above and
from below and some rigidity at the endpoints occurs.

We recall that a weighted manifold $\left(M,\left\langle ,\right\rangle,e^{-f}d\rm{vol}\right)$
is said to be $f$-parabolic if whenever $w$ is bounded below and satisfies
$\Delta_{f}w\leq0$ then $w$ is constant.

\begin{proof}(of Theorem \ref{B}). Since $|\mathrm{Ric}|^2\geq S^2/m$ by the Cauchy-Schwarz
inequality, and $\Delta \lambda \leq 0$ by assumption, (\ref{2.14}) in Lemma
\ref{lemma_2.2} yields
\begin{equation}
\label{5.1}
\frac 12  \Delta_{f}S  =\lambda S- \left|Ric\right|^{2}+ \left(m-1\right)\Delta\lambda\leq
\lambda S- \frac {S^2}m .
\end{equation}
%and since $|\mathrm{Ric}|^2\geq S^2/m$ by the Cauchy-Schwarz inequality, and $\Delta \lambda \leq
%0$ by assumption, we conclude that
%\begin{equation}\label{5.1}
%\frac 12 \Delta_{f}S \leq \lambda S- \frac{1}{m}S^{2}
%\end{equation}
%on $M$.

Note next that since $\mathrm{Ric}_f=\lambda \geq \lambda _* >-\infty
$, by Proposition~\ref{prop_volest1} we have
\begin{equation*}
\mathrm{vol}_f(B_r)\leq C_1 e^{C_2 r^2},
\end{equation*}
for some positive constants $C_1,$ $C_2,$
and, in particular,
\begin{equation}
\label{volume estimate}
\liminf_{r\to\infty} \frac{\log \mathrm{vol}_f(B_r)}{r^2} \leq
C_2<+\infty.
\end{equation}
Applying  Theorem 12 of \cite{PRiS} to the function $S_-=\max\{-S, 0\}$, which
 is a weak solution of
\begin{equation*}
 \Delta_{f}S_- \geq  2\lambda S_- - \frac{2}{m}S_-^{2},
\end{equation*}
with $a\left(x\right)=2 \lambda\left(x\right)$, $b\left(x\right)=\frac{2}{m}$,
$\sigma=2$, and  deduce
that
\begin{equation*}
S_-(x)\leq \sup_M \frac{ \lambda_-(x)}{1/m} ,
\end{equation*}
from which we conclude that
\begin{equation*}
S(x)\geq \min\{m\lambda_*, 0\}.
\end{equation*}
In particular, $S_*\geq 0$ if $\lambda\geq 0$, and $S_*\geq m\lambda_*$ if
$\lambda_*\leq \lambda\leq 0$.

Next, again by (\ref{volume estimate}), the weak minimum principle
at infinity for $\Delta_f$ holds (see Theorem~\ref{prop_3.1}), and therefore we
may find a sequence $\{x_n\}$ such that $\Delta_f S(x_n)\geq - 1/n$ and $S(x_n)\to S_*.$
Computing the liminf of (\ref{5.1}) along this sequence and setting
$\overline \lambda = \liminf \lambda(x_n)$ we deduce
that
\begin{equation*}
0\leq  \overline \lambda S_*-S_*^2/m.
\end{equation*}
Thus, if $\overline \lambda = 0$, then $S_*=0,$ while if $\overline \lambda \ne
0,$ then solving the inequality yields
$m\overline \lambda \leq S_* \leq 0$ if $\overline \lambda <0$ and
$0\leq S_* \leq m\overline\lambda $ if $\overline \lambda >0$. Since obviously
$\lambda_*\leq \overline \lambda \leq \lambda^*$, this gives the
scalar curvature estimates in (i), (ii) and (iii).

We now suppose that the scalar curvature achieves its lower bound and,
according to the classification in Theorem \ref{th_classification}, we prove rigidity.

In case (i) using (\ref{5.1}), we see that $S(x)\geq S_*=m\lambda_*$ satisfies
\begin{equation}
\label{chain2}
\frac{1}{2}\Delta_{f}S \leq -\frac{S}{m}(S-m\lambda)\leq -\frac{S}{m}(S-m\lambda_*)
\end{equation}
on the open set $\Omega=\{x\in M:S(x)<0\}$. Therefore, if $S(x_0)=S_*=m\lambda_*<0$
for some $x_0$, we deduce that the function $u=S-m\lambda_*\geq 0$ achieves its
minimum value $u(x_0)=0$ and satisfies the differential inequality
\[
\frac{1}{2}\Delta_{f}u+\lambda_* u\leq0
\]
on $\Omega$. By the minimum principle, $u(x)=0$ on the connected
component $\Omega_0$ of $\Omega$ containing $x_0$. It follows
that the open set $\Omega_0$ is also closed, thus $\Omega_0=M$ and
$u(x)=0$ on $M$. This means that $S(x)=m\lambda_*$ is constant.
Using this information into (\ref{chain2}) we get that $\lambda$ is constant.
Going back to  (\ref{5.1}), by the equality case in the Cauchy-Schwarz inequality, we obtain
\[
\ Ric=\lambda\left\langle ,\right\rangle,
\]
showing that $M$ is Einstein and the soliton is trivial.
% either $M$ is a space-form
% of negative curvature or it is a warped-product $\mathbb{R}\times_{g}\Sigma$  over
% an Einstein hypersurface $\Sigma$. In both cases, using the explicit expression of
% $\lambda$, and recalling that $\lambda$ is constant, we conclude that the (almost)
% soliton is trivial.

Assume next that the soliton is steady, so that $\lambda \equiv 0$.
Then  $S(x)\geq S_*=0$ solves
\[\frac{1}{2}\Delta_{f}S\leq0.\]
Therefore, if $S(x_0)=0$, arguing as above we conclude that $M$ must be Ricci-flat and,
by case (a.1) of Theorem  \ref{th_classification}, $M$ is a cylinder over a Ricci-flat,
totally geodesic hypersurfaces $\Sigma$.

Finally assume that the almost soliton is shrinking. Then, $S(x)\geq S_*=0$ satisfies
\[
\frac{1}{2}\Delta_{f}S\leq \lambda S.
\]
If $S(x_0)=0$ for some $x_0 \in M$, by the minimum principle (see e.g. page 35 in \cite{GT})
we deduce that $S(x)=0$ is constant and all the inequalities used in (\ref{5.1}) become
equalities. In particular,  $|Ric|^2=S^2/m=0$ proving that $M$ is Ricci-flat.
By case (a.2) in Theorem \ref{th_classification}, $\lambda$ is a positive constant and $M$
must be isometric to the standard Euclidean space.

It remains to prove the last statement. Suppose then that $S_{*}=m\lambda^{*}>0$.
Since $S\geq S_{*}=m\lambda^{*}\geq m\lambda>0$, it follows that $m\lambda S-S^2\leq 0$
on $M$. Thus from (\ref{5.1}),
\[
\ \Delta_{f}S\leq 0
\]
and $S>0$ is a nonnegative $\Delta_f$-superharmonic function. It follows that, if
$\left(M,\left\langle ,\right\rangle, e^{-f}d\mathrm{vol}\right)$ is $f$-parabolic, then
$S=m\lambda^*$ is constant. Using (\ref{5.1})
we immediately deduce $S\left(\lambda-\frac{S}{m}\right)=0$ so that
$\lambda=\frac{S}{m}$ is constant. From (\ref{5.1}) we  have that
$\left|Ric\right|^{2}=\frac{S^2}{m}$, and, again by the equality case
 in the Cauchy-Schwarz inequality
\[
\ Ric=\lambda\left\langle\, ,\right\rangle,
\]
with $\lambda>0$. Thus $M$ is Einstein and compact by Myers' Theorem. Now from (\ref{0.2})
and the above considerations it follows that $Hess\left(f\right)=0$ on $M$
and compactness implies that $f$ is constant. Finally, if $\lambda \geq
A^2(1 +r)^{-\mu}$ with $0\leq \mu<1$, then by Proposition~\ref{prop_volest1}
 $\mathrm{vol}_f (\partial B_r)$ tends to zero as $r\to \infty,$ so, in particular,
\[
\frac 1 {\mathrm{vol}_f (\partial
B_r)} \not\in L^1(+\infty)
\]
and adapting to the diffusion
operator $\Delta_f$ standard proofs valid for the ordinary Laplace-Beltrami operator, \cite{RS},
one shows that $\left(M,\left\langle\, ,\right\rangle, e^{-f}d\mathrm{vol}\right)$,
is $f$-parabolic.
\end{proof}

\section{A gap theorem for the traceless Ricci tensor}

\begin{proof}(of Theorem \ref{D}).
As noted in the previous proof, since  $\mathrm{Ric}_f$ is bounded below by
(\ref{0.21}),  the weak maximum principle for $\Delta_{f}$ holds on
$\left(M,\left\langle\, ,\right\rangle,\nabla f\right)$.
Next, by Corollary~\ref{2.24}, (\ref{0.18}), (\ref{0.19}), and (\ref{0.21}),
we deduce that
\[
\ \frac{1}{2}\Delta_{f}\left|T\right|^{2}\geq 2\left(\lambda_{*}-
S^{*}\frac{m-2}{m\left(m-1\right)}\right)\left|T\right|^{2}-
\frac{4}{\sqrt{m\left(m-1\right)}}\left|T\right|^{3}.
\]
Assuming that $\left|T^{*}\right|=\sup_{M}\left|T\right|<+\infty$
(for otherwise there is nothing to prove) we may apply the weak maximum
principle and deduce that either
$\left|T\right|^{*}=0$ or
$\left|T\right|^{*}\geq\frac{1}{2}\left(\sqrt{m\left(m-1\right)}
\lambda_{*}-S^{*}\frac{m-2}{\sqrt{m\left(m-1\right)}}\right)$. In
the former case, $T=0$ that is $\mathrm{Ric}= S/m \langle \,,\rangle
$ and since $m\geq 3$, $S$ is constant by Schur's lemma and  $M$ is
Einstein, as required to conclude the proof of Theorem~\ref{D}.
\end{proof}

\section{Some topological remarks}
We shall prove the next theorem, which extends results obtained
in \cite{W}, \cite{N}, \cite{LR-Annalen} and \cite{ELNM}.
\begin{theorem}\label{teotopolog}
Let $\left(M,\left\langle ,\right\rangle\, ,e^{-f}d\mathrm{vol}\right)$ be a
geodesically complete weighted manifold, and assume that there
exists a point $o\in M$  and functions $\mu\geq 0$ and $g$ bounded
such that for every unit speed geodesic $\gamma$
issuing from $\gamma (0)=o$ we have
\begin{equation*}
%\label{tophyp1}
 \mathrm{Ric}_f (\dot\gamma,\dot \gamma) \geq \mu\circ\gamma +
 \langle \nabla g\circ \gamma, \dot\gamma\rangle
\end{equation*}
and
\begin{equation*}
%\label{tophyp2}
\int_0^{+\infty} \mu\circ \gamma (t) dt =+\infty.
\end{equation*}
Then, the following hold:
\begin{enumerate}
\item[(a)]If the above conditions hold then
 $\left|\pi_{1}\left(M\right)\right|<\infty$.
\item[(b)]If in addition $Ric\leq c<+\infty$ and $\mu=\mu_o(r(x))$ is
radial, where $r\left(x\right)=\mathrm{dist}\left(x,o\right)$,
% $g:\mathbb{R}\rightarrow\mathbb{R}$ a smooth function, $g\geq 0$,
% $g\notin L^{1}\left(+\infty\right)$.
then $M$ is diffeomorphic to the interior of  a compact manifold $N$ with $\partial N\neq \emptyset$.
\item[(c)]If $\mu\left(x\right)\geq\mu_{0}>0$
and $\sup_{M}( \left|\nabla f\right| +  |g|) \leq F<+\infty$,
then $M$ is compact and
$diam(M)\leq\frac{1}{\mu_{0}}\left[2F+\sqrt{4F^{2}+\pi^{2}\left(m-1\right)c}\right]$
\end{enumerate}
\end{theorem}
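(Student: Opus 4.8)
The plan is to run a single second--variation computation and then specialise it to each of the three conclusions. Fix a minimizing unit speed geodesic $\gamma\colon[0,\ell]\to M$ issuing from $o$, choose a parallel orthonormal frame $E_1,\dots,E_{m-1}$ of $\dot\gamma^{\perp}$, and test the index form of $\gamma$ with the fields $\psi E_i$, where $\psi\in C^2([0,\ell])$ and $\psi(0)=\psi(\ell)=0$. Summing over $i$ and using $\sum_i\langle R(E_i,\dot\gamma)\dot\gamma,E_i\rangle=\mathrm{Ric}(\dot\gamma,\dot\gamma)=\mathrm{Ric}_f(\dot\gamma,\dot\gamma)-(f\circ\gamma)''$, inserting the hypothesis $\mathrm{Ric}_f(\dot\gamma,\dot\gamma)\ge\mu\circ\gamma+(g\circ\gamma)'$ and integrating by parts, I arrive at the basic inequality
\begin{equation*}
\int_0^\ell\psi^2\,\mu\circ\gamma\,dt\le(m-1)\int_0^\ell(\psi')^2\,dt+\int_0^\ell 2\psi\psi'\bigl(g\circ\gamma-(f\circ\gamma)'\bigr)\,dt ,
\end{equation*}
valid along every minimizing geodesic. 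This is the engine for all three parts. The delicate point, visible already here, is that the drift $(f\circ\gamma)'$ produced by $\mathrm{Hess}\,f$ is a priori uncontrolled; this is precisely why $f$ is never assumed bounded and why the three conclusions differ in strength.

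For (c) I would take $\psi(t)=\sin(\pi t/\ell)$, so that $\int_0^\ell(\psi')^2=\pi^2/(2\ell)$, $\int_0^\ell\psi^2=\ell/2$ and $\int_0^\ell|\psi\psi'|=1$. Since now $|\nabla f|+|g|\le F$ pointwise, one has $|g\circ\gamma-(f\circ\gamma)'|\le F$ along $\gamma$, so the cross term is bounded by $2F$; together with $\mu\ge\mu_0$ this turns the basic inequality into the quadratic estimate $\mu_0\ell^2-4F\ell-(m-1)\pi^2\le0$. Solving for $\ell$ yields an explicit upper bound for $d(o,\gamma(\ell))$, whence $M$ is bounded and, being complete, compact by Hopf--Rinow; running the same estimate along a segment realising the diameter produces the asserted diameter bound.

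For (a) the drift obstruction is decisive: the Gaussian soliton on $\mathbb{R}^m$ (with $f=|x|^2/4$, $\mu\equiv\tfrac12$, $g\equiv0$) shows that $\int\psi^2(f\circ\gamma)''$ can exactly cancel the gain produced by $\int\mu=+\infty$, so $M$ need not be compact and only finiteness of $\pi_1$ can be expected. I would therefore pass to the universal cover $\pi\colon\widetilde M\to M$, pull back the metric, $f$, $\mu$ and the still bounded $g$, and fix $\tilde o\in\pi^{-1}(o)$; geodesics from $\tilde o$ project to geodesics from $o$, so the hypotheses persist with the same bound on $|g|$. For each deck transformation $\varphi\in\Gamma=\pi_1(M)$ I would apply the basic inequality along a minimizing geodesic from $\tilde o$ to $\varphi(\tilde o)$: the divergence $\int\mu=+\infty$ supplies the lower growth, the boundedness of $g$ controls its contribution, and the heart of the matter is to bound the drift contribution uniformly in $\varphi$, using that $\varphi$ is an isometry tying together the endpoint values of $(f\circ\gamma)'$. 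A uniform bound $d(\tilde o,\varphi(\tilde o))\le L$ confines the discrete orbit $\Gamma\tilde o$ to the compact ball $\overline B_L(\tilde o)$, forcing $\Gamma$ to be finite. I expect the control of the drift under the deck action to be the main obstacle.

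For (b) I would switch to critical point theory for the distance function $r=d(\cdot,o)$ in the sense of Grove--Shiohama. The radial integral condition $\int_0^\infty\mu_0=+\infty$, fed into the $f$--Laplacian comparison of Theorem \ref{3.1} and the Riccati inequality for $\Delta_f r$ obtained in the proof of Proposition \ref{prop_volest1}, should force the inward normal direction to dominate far from $o$, while the upper bound $\mathrm{Ric}\le c$ keeps conjugate and focal points at a definite distance (Rauch comparison bounding Jacobi fields from below). Together these are meant to exclude critical points of $r$ on $M\setminus\overline B_{r_0}$ for $r_0$ large; Gromov's isotopy lemma then identifies $M\setminus B_{r_0}$ with the product $\partial B_{r_0}\times[r_0,+\infty)$, so that $M$ is diffeomorphic to the interior of the compact manifold $\overline B_{r_0}$ with boundary $\partial B_{r_0}\ne\emptyset$. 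Here the crux is the absence of critical points at infinity, that is, balancing the lower control that forces focusing against the upper bound that prevents premature conjugate points.
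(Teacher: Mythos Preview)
Your treatment of (c) is correct and coincides with the paper's: it simply cites Galloway's theorem, whose proof is exactly your sine test-function computation.

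For (a), the outline is right, but your single ``basic inequality'' makes the drift harder to control than it needs to be. By multiplying the $\mathrm{Ric}_f$ lower bound by $\psi^2$ and integrating by parts, you turn the exact endpoint contribution $(f\circ\gamma)'\big|_0^\ell$ into the interior integral $\int 2\psi\psi'(f\circ\gamma)'$, which depends on $|\nabla f|$ along portions of the geodesic, not just at the two ends. The paper separates the two estimates: first, a second-variation bound with a piecewise-linear test function gives $\int_0^\ell\mathrm{Ric}(\dot\gamma,\dot\gamma)\le 2(m-1)+H_o+H_q$ with no $f$ at all; second, the raw integration of $\mathrm{Ric}=\mathrm{Ric}_f-(f\circ\gamma)''\ge\mu+(g\circ\gamma)'-(f\circ\gamma)''$ yields $\int_0^\ell\mu\le\int_0^\ell\mathrm{Ric}+|\nabla f|(\gamma(0))+|\nabla f|(\gamma(\ell))+2\sup|g|$. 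Combining, $\int_0^\ell\mu$ is bounded by data at the two endpoints only. The step you flag as ``the main obstacle'' is then immediate: both $\tilde o$ and $\varphi(\tilde o)$ lie in the fibre $P^{-1}(o)$, so $|\nabla'f'|(\tilde o)=|\nabla'f'|(\varphi(\tilde o))=|\nabla f|(o)$ and $H_{\tilde o}=H_{\varphi(\tilde o)}$, giving a bound independent of $\varphi$. Your one-shot inequality can be rescued (with the piecewise-linear $\psi$ the drift term is supported in unit balls about the endpoints, which project isometrically to $B_1(o)$), but the separated version is what makes the argument transparent.

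For (b), the paper takes a different and more direct route than Grove--Shiohama theory for $r$: it shows that the \emph{potential} $f$ is an exhaustion function with compact critical set, then invokes standard Morse theory. Indeed, the two estimates above (now on $M$, with $H_q\le c$ from $\mathrm{Ric}\le c$) give $|\nabla f|(q)\ge\int_0^{r(q)}\mu_0(s)\,ds-\text{const}$, which is positive for $r(q)$ large since $\mu_0\notin L^1(+\infty)$; integrating once more along a minimizing geodesic shows $f(q)\to+\infty$ as $r(q)\to\infty$. This bypasses any Rauch/conjugate-point analysis and uses radiality of $\mu$ only to make $\int_0^{r(q)}\mu_0$ a function of $r(q)$. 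Your proposal via critical points of $r$ is not obviously wrong, but as written it does not explain why distant points cannot be critical: the $f$-Laplacian comparison bounds $\Delta_f r$ from above, which controls the mean curvature of geodesic spheres, but does not by itself preclude several competing minimal geodesics from $q$ to $o$.
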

Clearly the Theorem applies to almost Ricci solitons for which the
soliton function $\lambda$ satisfies the conditions listed in the
statement.

These three conclusions can be deduced from the following lemmas which
estimates the integral of $\mathrm{Ric}$ along geodesics.
\begin{lemma}
\label{lemmatopolog1}
Let $\left(M,\left\langle ,\right\rangle\right)$ be a Riemannian manifold.
Fix $o\in M$ and let $r\left(x\right)=dist\left(x,o\right)$. For any point
$q \in M$, let $\gamma_{q}:\left[0,r\left(q\right)\right]\rightarrow M$ be
a minimizing geodesic from $o$ to $q$ such that $\left|\dot{\gamma}_{q}\right|=1$.
\begin{itemize}
\item[(A)]
If $h\in Lip_{loc}\left(\mathbb{R}\right)$ is such that $h\geq 0$ and $h\left(0\right)=0$,
then, for every $q\not\in \mathrm{cut}(o)$,
\begin{equation*}
h^{2}\left(r\left(q\right)\right)\Delta r (q)
% \circ\gamma_{q}\left(r\left(q\right)\right)
\leq
%&
\left(m-1\right)\int^{r\left(q\right)}_{0}\left(h^{\prime}\right)^{2}ds
%\\&
-\int^{r\left(q\right)}_{0}h^{2}Ric\left(\dot{\gamma_{q}},\dot{\gamma_{q}}\right)ds
\nonumber.
\end{equation*}
If in addition  $h\left(r\left(q\right)\right)=0$,
then for every $q \in M $,
\[
\ 0\leq \left(m-1\right)\int^{r\left(q\right)}_{0}\left(h^{\prime}\right)^{2}ds
-\int^{r\left(q\right)}_{0}h^{2}Ric\left(\dot{\gamma_{q}},\dot{\gamma_{q}}\right)ds.
\]
\item[(B)] For all $q\in M$ such that $r\left(q\right)>2$, we have
\[
\ \int^{r\left(q\right)}_{0}Ric\left(\dot{\gamma_{q}},\dot{\gamma_{q}}\right)
\leq2\left(m-1\right)+H_{o}+H_{q},
\]
where, as in \cite{W}, we have set
\[
\ H_{p}=\max\left\{0,\sup_{B_{1}\left(p\right)}\mathrm{Ric}\right\},\,\forall \,p\in M
\]
\end{itemize}
\end{lemma}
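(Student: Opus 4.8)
The plan is to prove (A) by a Bochner-plus-Riccati computation along minimizing geodesics, and then to obtain (B) simply by inserting a suitable piecewise-linear test function $h$ into the integral inequality furnished by (A). Only part (A) carries analytic content; part (B) is an elementary choice of cutoff.

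For the first assertion of (A), fix $q\notin\mathrm{cut}(o)$ and work along the minimizing geodesic $\gamma_q$, writing $\varphi(t)=(\Delta r)\circ\gamma_q(t)$. Exactly as in (\ref{3.7a})--(\ref{3.8}), the Bochner formula for $r$ together with the Cauchy-Schwarz bound $|\mathrm{Hess}(r)|^2\geq(\Delta r)^2/(m-1)$ (valid because $\mathrm{Hess}(r)(\nabla r,\cdot)=0$, so $\mathrm{Hess}(r)$ is effectively an $(m-1)\times(m-1)$ symmetric form of trace $\Delta r$) yields the Riccati inequality $\varphi'+\varphi^2/(m-1)\leq-\mathrm{Ric}(\dot\gamma_q,\dot\gamma_q)$ on the interior of $\gamma_q$. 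Multiplying by $h^2$ and completing the square gives
\[
(h^2\varphi)'=2hh'\varphi+h^2\varphi'\leq-\left(\frac{h\varphi}{\sqrt{m-1}}-\sqrt{m-1}\,h'\right)^2+(m-1)(h')^2-h^2\mathrm{Ric}(\dot\gamma_q,\dot\gamma_q),
\]
so that, discarding the square, $(h^2\varphi)'\leq(m-1)(h')^2-h^2\mathrm{Ric}(\dot\gamma_q,\dot\gamma_q)$. Integrating on $[0,r(q)]$ produces the first inequality; the boundary term at $t=0$ vanishes since $h(0)=0$ and $h\in\mathrm{Lip}_{loc}$ kill the $(m-1)/t$ singularity of $\varphi$ near the origin.

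When in addition $h(r(q))=0$ the left-hand side is zero and the estimate reduces to $0\leq(m-1)\int_0^{r(q)}(h')^2-\int_0^{r(q)}h^2\mathrm{Ric}$. To secure this for \emph{every} $q\in M$, including cut points, I would instead read it off the second variation of energy: with $\{E_i\}_{i=1}^{m-1}$ a parallel orthonormal frame along $\gamma_q$ orthogonal to $\dot\gamma_q$, the fields $V_i=hE_i$ are admissible proper variations because $h(0)=h(r(q))=0$, and nonnegativity of the index form of the minimizing geodesic $\gamma_q$, summed over $i$, is precisely this inequality, using $\sum_i\langle R(E_i,\dot\gamma_q)\dot\gamma_q,E_i\rangle=\mathrm{Ric}(\dot\gamma_q,\dot\gamma_q)$. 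For (B), assume $r(q)>2$ and take $h$ to be the tent function equal to $1$ on $[1,r(q)-1]$, rising linearly from $0$ to $1$ on $[0,1]$ and falling linearly from $1$ to $0$ on $[r(q)-1,r(q)]$; then $h(0)=h(r(q))=0$ and $\int_0^{r(q)}(h')^2=2$, so the second inequality of (A) gives $\int_0^{r(q)}h^2\mathrm{Ric}(\dot\gamma_q,\dot\gamma_q)\leq2(m-1)$. Splitting $\int_0^{r(q)}\mathrm{Ric}=\int_0^{r(q)}h^2\mathrm{Ric}+\int_0^{r(q)}(1-h^2)\mathrm{Ric}$ and noting that $1-h^2$ is supported on $[0,1]\cup[r(q)-1,r(q)]$, where $\gamma_q$ lies in $\bar B_1(o)$ and $\bar B_1(q)$ respectively, I would bound $(1-h^2)\mathrm{Ric}(\dot\gamma_q,\dot\gamma_q)\leq(1-h^2)H_o$ on the first interval and $\leq(1-h^2)H_q$ on the second (using $0\leq1-h^2\leq1$ and $H_o,H_q\geq0$), each contributing at most $H_o$, respectively $H_q$. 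This delivers $\int_0^{r(q)}\mathrm{Ric}\leq2(m-1)+H_o+H_q$.

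The only delicate point is the passage past the cut locus. The Riccati argument is legitimate only on the smooth part of $r$, which is why the pointwise $\Delta r$ estimate is stated for $q\notin\mathrm{cut}(o)$; the integral inequality actually needed for (B) is obtained for all $q$ through the variational (index-form) formulation, which never requires $r$ to be smooth at the endpoint $q$ and so bypasses the cut-locus issue entirely.
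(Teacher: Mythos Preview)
Your proof is correct and follows essentially the same Riccati/Bochner route as the paper for the pointwise inequality in (A) and the identical tent-function choice for (B). The one genuine difference is how you handle the cut locus in the second assertion of (A): you invoke the index form of the minimizing geodesic (nonnegativity of the second variation for proper vector fields $V_i=hE_i$), whereas the paper deliberately avoids the second variation formula and instead uses the Calabi trick, translating the origin to $o_\epsilon=\gamma_q(\epsilon)$ so that $q\notin\mathrm{cut}(o_\epsilon)$, applying the smooth inequality there, and letting $\epsilon\to 0$. Both arguments are standard and yield the same conclusion; your variational route is arguably cleaner for this particular inequality since the index-form statement is exactly what is needed, while the paper's approach has the advantage of keeping the entire proof within the Bochner/Riccati framework already set up in Section~3.
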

\begin{proof}
Part (A) is well known. We provide a proof which avoids  the use of the second variation
formula for arc-length.

Fix a point $q\in M$ and suppose that $q\notin cut\left(o\right)$.
Proceeding as in the proof of Theorem \ref{3.1} we rewrite (\ref{3.8}) as
\begin{equation}
\frac{\left(\Delta r\circ\gamma_{q}\right)^{2}}{m-1}+\frac{d}{ds}
\left(\Delta r\circ\gamma_{q}\right)
+Ric\left(\dot{\gamma_{q}},\dot{\gamma_{q}}\right)\leq 0.
\label{topolog3}
\end{equation}
If $h\in Lip_{loc}\left(\mathbb{R}\right)$ is
such that $h\geq 0$, $h\left(0\right)=0$, multiplying by $h^{2}$
equation (\ref{topolog3}) and integrating on $\left[0,t\right]$ we
get
\[
\int^{t}_{0}h^{2}\frac{\left(\Delta r\circ\gamma_{q}\right)^{2}}{m-1}ds
+\int^{t}_{0}\frac{d}{ds}\left(\Delta r\circ\gamma_{q}\right)h^{2}ds
+\int^{t}_{0}h^{2}Ric\left(\dot{\gamma_{q}},\dot{\gamma_{q}}\right)ds\leq 0.
\]
Integrating by parts and noting that
$\left(\Delta r \circ \gamma_{q}\right)h^{2}\rightarrow 0$
as $r\rightarrow 0$ we obtain
\begin{align*}
\ 0\geq &\int^{t}_{0}h^{2}\frac{\left(\Delta r \circ \gamma_{q}\right)^{2}}{m-1}ds
+h^{2}\left(t\right)\Delta r \circ \gamma_{q}\left(t\right)\\
&-2\int^{t}_{0}hh^{\prime}\left(\Delta r \circ \gamma_{q}\right)ds
+\int^{t}_{0}Ric\left(\dot{\gamma_{q}},\dot{\gamma_{q}}\right)h^{2}ds,
\end{align*}
Since
\begin{align*}
2hh^{\prime}\left(\Delta r\circ\gamma_{q}\right)
=&\frac{2h\Delta r\circ\gamma_{q}}{\sqrt{m-1}}\sqrt{m-1}h^{\prime}\\
\leq&\frac{h^{2}\left(\Delta r\circ\gamma_{q}\right)^{2}}{m-1}
+\left(m-1\right)\left(h^{\prime}\right)^{2},
\end{align*}
we deduce that
\[
0\geq h^{2}\left(t\right)\Delta r\circ\gamma_{q}\left(t\right)
-\left(m-1\right)\int^{t}_{0}\left(h^{\prime}\right)^{2}ds
+\int^{t}_{0}h^{2}Ric\left(\dot{\gamma_{q}},\dot{\gamma_{q}}\right)ds
\]
and, setting  $t=r\left(q\right)$, we conclude that
\begin{equation*}
%\label{topolog4}
h^{2}\left(r\left(q\right)\right)\Delta r (q)
%\circ\gamma_{q}\left(r\left(q\right)\right)
\leq
%&
\left(m-1\right)\int^{r\left(q\right)}_{0}\left(h^{\prime}\right)^{2}ds
% \\&
-\int^{r\left(q\right)}_{0}h^{2}Ric\left(\dot{\gamma_{q}},\dot{\gamma_{q}}\right)ds\nonumber
\end{equation*}
If  in addition $h$ satisfies  $h^{2}\left(r\left(q\right)\right)=0$, then the above inequality
becomes
\begin{equation}\label{topolog4a}
0\leq \left(m-1\right)\int^{r\left(q\right)}_{0}\left(h^{\prime}\right)^{2}ds
-\int^{r\left(q\right)}_{0}h^{2}Ric\left(\dot{\gamma_{q}},\dot{\gamma_{q}}\right)ds.
\end{equation}
This completes the proof if $q\notin cut\left(o\right)$. In the general case
inequality (\ref{topolog4a}) can be extended to any
$q \in M$ using the Calabi trick. Indeed, suppose that $q\in cut\left(o\right)$.
Translating the origin $o$ to $o_{\epsilon}=\gamma_{q}\left(\epsilon\right)$
so that $q\notin cut\left(o_{\epsilon}\right)$, using the triangle inequality
and, finally, taking the limit as $\epsilon\rightarrow 0$, one checks that
(\ref{topolog4a}) holds also in this case.

To prove part (B) we note that if  $h\in Lip_{loc}\left(\mathbb{R}\right)$
is such that $h\geq 0$ and $h\left(0\right)=h\left(r\left(q\right)\right)=0$,
then by we may rewrite (A) in the form
\begin{equation*}
\int^{r\left(q\right)}_{0}\!Ric\left(\dot{\gamma_{q}},\dot{\gamma_{q}}\right)ds
\leq
%&
\left(m-1\right)\int^{r\left(q\right)}_{0}\!\left(h^{\prime}\right)^{2}ds
%\\ &
+\int^{r\left(q\right)}_{0}\!\left(1-h^{2}\right)
Ric\left(\dot{\gamma_{q}},\dot{\gamma_{q}}\right)ds.\nonumber
\end{equation*}
Choosing
\[
\ h\left(s\right)=\left\{
\begin{array}{ll}
s&0\leq s\leq1\\
1&1\leq s\leq r\left(q\right)-1\\
r\left(q\right)-s&r\left(q\right)-1\leq s\leq r\left(q\right),
\end{array}
\right.
\]
where $r\left(q\right)>2$, we obtain
\begin{align*}
\int^{r\left(q\right)}_{0}Ric\left(\dot{\gamma_{q}},\dot{\gamma_{q}}\right)ds\leq&2\left(m-1\right)+\int^{1}_{0}\left(1-s^{2}\right)Ric\left(\dot{\gamma_{q}},\dot{\gamma_{q}}\right)ds\\
+&\int^{r\left(q\right)}_{r\left(q\right)-1}\left(1-\left(r\left(q\right)-s\right)^{2}\right)Ric\left(\dot{\gamma_{q}},\dot{\gamma_{q}}\right)ds\\
\leq&2\left(m-1\right)+H_{o}+H_{q}.
\end{align*}

\end{proof}

\begin{lemma}\label{lemmatopolog}
Let $\left(M,\left\langle ,\right\rangle,e^{-f}d\mathrm{vol} \right)$ be a
complete weighted Riemannian manifold. Fix $o\in M$ and let
$r\left(x\right)=dist\left(x,o\right)$ and assume that there exist functions
$\mu\geq 0$ and $g$ bounded such that for every unit speed
geodesic $\gamma$ issuing from $o$
\begin{equation*}
\mathrm{Ric}_f (\dot\gamma,\dot\gamma) \geq \mu (\gamma(t)) +
\langle \nabla g,\dot\gamma\rangle.
\end{equation*}
Then for every such geodesic
\begin{align*}
\int^{t}_{0}Ric\left(\dot{\gamma},\dot{\gamma}\right)
=&\left\langle \nabla f,\dot{\gamma}\left(0\right)\right\rangle
- \left\langle \nabla f, \dot{\gamma}\left(t\right)\right\rangle
+\int_{0}^{t} \mu\left(\gamma\left(s\right)\right) ds + g(\gamma(t)) - g(o)\\
& \geq-\left|\nabla f_{\gamma\left(0\right)}\right|-
\left|\nabla f_{\gamma\left(t\right)}\right|- 2\sup |g| +\int_{0}^{t}
\mu\left(\gamma\left(s\right)\right) ds .
\end{align*}
\end{lemma}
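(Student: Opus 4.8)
The plan is to use nothing more than the defining relation $\mathrm{Ric}_f = \mathrm{Ric} + \mathrm{Hess}(f)$ between the Ricci and the Bakry-Emery Ricci tensors, together with the fact that along a geodesic both the Hessian term and the $g$-term integrate to pure boundary contributions. First I would rewrite the integrand as $\mathrm{Ric}(\dot\gamma,\dot\gamma) = \mathrm{Ric}_f(\dot\gamma,\dot\gamma) - \mathrm{Hess}(f)(\dot\gamma,\dot\gamma)$. Since $\gamma$ is a geodesic, $\nabla_{\dot\gamma}\dot\gamma = 0$, so that
\[
\frac{d^2}{ds^2}(f\circ\gamma)(s) = \frac{d}{ds}\langle \nabla f,\dot\gamma\rangle = \mathrm{Hess}(f)(\dot\gamma,\dot\gamma).
\]
Integrating this identity over $[0,t]$ yields the exact boundary term
\[
\int_0^t \mathrm{Hess}(f)(\dot\gamma,\dot\gamma)\,ds = \langle \nabla f,\dot\gamma(t)\rangle - \langle \nabla f,\dot\gamma(0)\rangle,
\]
and hence the identity
\[
\int_0^t \mathrm{Ric}(\dot\gamma,\dot\gamma)\,ds = \langle \nabla f,\dot\gamma(0)\rangle - \langle \nabla f,\dot\gamma(t)\rangle + \int_0^t \mathrm{Ric}_f(\dot\gamma,\dot\gamma)\,ds.
\]

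Next I would feed the curvature hypothesis $\mathrm{Ric}_f(\dot\gamma,\dot\gamma) \geq \mu(\gamma(s)) + \langle \nabla g,\dot\gamma\rangle$ into the last integral. The $g$-term is once more a total derivative along $\gamma$, namely $\langle \nabla g,\dot\gamma\rangle = \frac{d}{ds}(g\circ\gamma)$, so that its integral equals $g(\gamma(t)) - g(\gamma(0)) = g(\gamma(t)) - g(o)$. Combining this with the previous identity produces
\[
\int_0^t \mathrm{Ric}(\dot\gamma,\dot\gamma)\,ds \geq \langle \nabla f,\dot\gamma(0)\rangle - \langle \nabla f,\dot\gamma(t)\rangle + \int_0^t \mu(\gamma(s))\,ds + g(\gamma(t)) - g(o),
\]
which is the first displayed relation of the statement.

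For the concluding lower bound I would simply estimate the boundary terms. Because $\gamma$ has unit speed, the Cauchy-Schwarz inequality gives $\langle \nabla f,\dot\gamma(0)\rangle \geq -|\nabla f_{\gamma(0)}|$ and $-\langle \nabla f,\dot\gamma(t)\rangle \geq -|\nabla f_{\gamma(t)}|$, while the boundedness of $g$ gives $g(\gamma(t)) \geq -\sup_M|g|$ and $-g(o) \geq -\sup_M|g|$. Substituting these four estimates yields the second displayed inequality. There is no genuine obstacle in this argument: it is a one-line computation once the right identity is isolated. The only points deserving care are the identification of $\mathrm{Hess}(f)(\dot\gamma,\dot\gamma)$ with the ordinary second derivative $(f\circ\gamma)''$, which relies on $\gamma$ being a geodesic, and the recognition that both the Hessian and the $g$-terms are exact derivatives along $\gamma$, so that their integrals reduce to boundary evaluations.
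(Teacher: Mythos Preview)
Your proof is correct and follows essentially the same route as the paper: recognize that along a geodesic both $\mathrm{Hess}(f)(\dot\gamma,\dot\gamma)$ and $\langle\nabla g,\dot\gamma\rangle$ are exact derivatives, integrate the inequality $\mathrm{Ric}_f(\dot\gamma,\dot\gamma)\geq \mu\circ\gamma + \langle\nabla g,\dot\gamma\rangle$ over $[0,t]$, and then estimate the resulting boundary terms by Cauchy--Schwarz and the bound on $g$. (Note that the ``$=$'' in the first displayed line of the statement is a typo in the paper and should be ``$\geq$'', as your derivation correctly shows.)
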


\begin{proof}
By assumption
\begin{equation}
\label{ric_est}
\ Ric\left(\dot{\gamma},\dot{\gamma}\right)
+Hess\left(f\right)\left(\dot{\gamma},\dot{\gamma}\right)
\geq \mu\circ\gamma + \langle \nabla g, \dot\gamma\rangle,
\end{equation}
which can be written in the form
\[
\ Ric\left(\dot{\gamma},\dot{\gamma}\right)
+\frac{d}{dt}\left\langle \nabla f\left(\gamma\right),\dot{\gamma}\right\rangle
\geq \mu\circ\gamma + \frac {d}{dt} (g\circ \gamma).
\]
Now integrating on $\left[0,t\right]$,
\begin{equation*}
\int^{t}_{0}Ric\left(\dot{\gamma},\dot{\gamma}\right)
+\left\langle \nabla f,\dot{\gamma}(t) \right\rangle
-\left\langle \nabla f,\dot{\gamma}(0)\right\rangle
\geq \int^{t}_{0}\mu\left(\gamma\left(s\right)\right)ds + g(\gamma (t)) - g(o).
\end{equation*}
\end{proof}

We are now in the position to give the
\begin{proof}[Proof of Theorem \ref{teotopolog}]
Following \cite{W},  let us consider the
Riemannian universal covering $P: M^{\prime}\rightarrow M$ \ of $M$. Since $P$
is a local isometry then $M^{\prime}$ is a weighted complete
Riemannian manifold with weight $ f^{\prime} = f\circ P$. Moreover,
since every unit speed geodesic $\gamma^{\prime}$ projects to a unit
speed geodesic $\gamma = P \circ\gamma^{\prime}$ we see that
\begin{equation*}
\mathrm{Ric}_{f^{\prime}}^{\prime}(\dot \gamma^{\prime},\dot\gamma^{\prime}) =
\mathrm{Ric}_f (\dot\gamma,\dot\gamma) \geq \mu \circ\gamma +
\frac {d}{dt} (g\circ \gamma) = \mu^{\prime}\circ\gamma^{\prime}
+\frac{d}{dt} (g^{\prime}\circ \gamma^{\prime}),
\end{equation*}
where the function $g^{\prime}= g\circ P$ is bounded and

$\mu ^{\prime} = \mu\circ P \geq 0$.
satisfies
\begin{equation}
\label{lambdaprime divergence}
\int_0^{+\infty}\mu^{\prime}\circ\gamma^{\prime} (t)dt =
\int_0^{+\infty} \mu\circ\gamma (t)dt = +\infty.
\end{equation}

We identify
\[
\ \pi_{1}\left(  M,o\right)  =Deck(M^{\prime}),
\]
the covering transformation group, and recall that there is a bijective
correspondence $\pi_{1}\left(  M,o\right)  \longleftrightarrow P^{-1}\left(
o\right)  $. Therefore it suffices to show that $P^{-1}\left(  o\right)
\subset B_{R}^{\prime}\left(  o^{\prime}\right)  $ for some $R>>1$. Since
$\pi_{1}\left(  M,o\right)  =Deck(M^{\prime})$ acts transitively on the fibre
$P^{-1}\left(  o\right)  $, we have
\[
\ P^{-1}\left(  o\right)  =\left\{  h\left(  o^{\prime}\right)  :h\in
Deck(M^{\prime})\right\}  ,
\]
and we are reduced to showing that
\[
r^{\prime}\left(  h\left(  o^{\prime}\right)  \right)  \leq R<\infty
,\;\,\,\forall h\in Deck(M^{\prime}),
\]
where we have set $r^{\prime}\left(  x^{\prime}\right)  =dist_{M^{\prime}%
}\left(  o^{\prime},x^{\prime}\right)  $. Fix $h\in Deck(M^{\prime})$ and a
unit speed, minimizing geodesic $\ \gamma_{h\left(  o^{\prime}\right)
}^{\prime}:\left[  0,r^{\prime}\left(  h\left(  o^{\prime}\right)  \right)
\right]  \rightarrow M^{\prime},$ issuing from $\ \gamma_{h\left(  o^{\prime
}\right)  }^{\prime}\left(  0\right)  =o^{\prime}$.
Recalling that $\mathrm{Ric}^{\prime}(\dot\gamma^{\prime},\dot\gamma^{\prime})
= \mathrm{Ric}^{\prime}_{f^{\prime}} (\dot\gamma^{\prime},\dot\gamma^{\prime}) -
\frac{d}{dt} \langle \nabla^{\prime} f^{\prime}\circ \gamma^{\prime},
\dot \gamma^{\prime}\rangle$ and using Lemma~\ref{lemmatopolog1} (B)
and Lemma~\ref{lemmatopolog}
we get
\begin{multline*}
\int_{0}^{r^{\prime}\left(  h\left(  o^{\prime}\right)  \right)  }%
\mu^{\prime}\circ\ \gamma_{h\left(  o^{\prime}\right)  }^{\prime}\left(
s\right)  ds
\leq2\left(  m-1\right)  +H_{o^{\prime}}+H_{h\left(  o^{\prime }\right)  }
\\
+\ \left\vert \nabla^{\prime}f^{\prime}\right\vert \left(
o^{\prime}\right)  +\left\vert \nabla^{\prime}f^{\prime}\right\vert \left(
h\left(  o^{\prime}\right)  \right)  + 2\sup_{M^\prime} |g^\prime|.
\end{multline*}
Since $P:M^{\prime}\rightarrow M$ is a local isometry and $o^{\prime},h\left(
o^{\prime}\right)  \in P^{-1}\left(  o\right)  $ we deduce
\[
\ \left\vert \nabla^{\prime}f^{\prime}\right\vert \left(  o^{\prime}\right)
=\left\vert \nabla f\right\vert \left(  o\right)  =\left\vert \nabla^{\prime
}f^{\prime}\right\vert \left(  h\left(  o^{\prime}\right)  \right)  .
\]
On the other hand $Deck(M^{\prime})\subset Iso(M^{\prime})$, so $h(B_{1}%
^{\prime}(o^{\prime}))$ is isometric to $B_{1}^{\prime}\left(  h\left(
o^{\prime}\right)  \right)  $ and we have
\[
\ \left\vert H_{o^{\prime}}\right\vert =\left\vert H_{h\left(  o^{\prime
}\right)  }\right\vert .
\]
Summarizing, we have obtained that, for every $h\in Deck\left(  M^{\prime
}\right)  $,%
\begin{equation}
\int_{0}^{r^{\prime}\left(  h\left(  o^{\prime}\right)  \right)  }%
\mu^{\prime}\circ\ \gamma_{h\left(  o^{\prime}\right)  }^{\prime}\left(
s\right)  ds\leq2\left\{  \left(  m-1\right)  +H_{o^{\prime}}+\left\vert
\nabla f\right\vert \left(  o\right)  \right\} +2\sup_{M} |g| .
\label{wylie_inequality}%
\end{equation}
With this preparation, we now argue by contradiction and suppose that there
exists a sequence of transformations $\left\{  h_{n}\right\}  \subset
Deck\left(  M^{\prime}\right)  $ \ such that%
\begin{equation}
r^{\prime}\left(  h_{n}\left(  o^{\prime}\right)  \right)  \rightarrow
+\infty\text{, as }n\rightarrow+\infty.\label{wylie_diverging}%
\end{equation}
Let  $\gamma_{h_{n}\left(  o^{\prime}\right)  }^{\prime}\left(  s\right)
=\exp_{o^{\prime}}\left(  s\xi_{n}^{\prime}\right)  $, where $\left\{  \xi
_{n}^{\prime}\right\}  \subset\mathbb{S}_{o^{\prime}}^{m-1}\subset
T_{o^{\prime}}M^{\prime}$. Then, there exists a subsequence $\left\{
\xi_{n_{k}}^{\prime}\right\}  \rightarrow\xi^{\prime}\in\mathbb{S}_{o^{\prime
}}^{m-1}$ as $k\rightarrow+\infty$ and, by the Ascoli-Arzel\`a's Theorem, the
sequence of minimizing geodesics $\left\{  \gamma_{h_{n_{k}}\left(  o^{\prime
}\right)  }^{\prime}\right\}  $ converges uniformly on compact subintervals of
$[0,+\infty)$ to the unit speed geodesic $\gamma^{\prime}\left(  s\right)
=\exp_{o^{\prime}}\left(  s\xi^{\prime}\right)  $.
%Since $P$ is a local
%isometry satisfying%
%\[
%dist_{M^{\prime}}\left(  x^{\prime},y^{\prime}\right)  \geq dist_{M}\left(
%P\left(  x^{\prime}\right)  ,P\left(  y^{\prime}\right)  \right)  ,\text{
%}\forall x^{\prime},y^{\prime}\in M^{\prime},
%\]
%if we consider the unit speed (but possibly non-minimizing) geodesics
% $\gamma_{n}\left(  s\right)  =P\circ\gamma_{h_{n}\left(
%o^{\prime}\right)  }^{\prime}\left(  s\right)  $ and $\gamma\left(  s\right)
%=P\circ\gamma^{\prime}\left(  s\right)  $  in  $M$, then again $\gamma_{n_{k}%
%}\left(  s\right)  \rightarrow\gamma\left(  s\right)  $, uniformly on compact
%subintervals of $[0,+\infty)$.
Since, by (\ref{lambdaprime divergence})
\[
\int_{0}^{+\infty}\mu^\prime\circ\gamma^\prime\left(  s\right)  ds=+\infty,
\]
we can choose $T>>1$ such that%
\begin{equation}
\int_{0}^{T}\mu^\prime\circ\gamma^\prime\left(  s\right)  ds>2\left\{  \left(
m-1\right)  +H_{o^{\prime}}+\left\vert \nabla f\right\vert \left(  o\right)
\right\} +2\sup_{M} |g|  .\label{wylie_inequality2}%
\end{equation}
On the other hand, according to (\ref{wylie_diverging}) we find $k_{0}>0$ such
that, for every $k\geq k_{0},$ $r^{\prime}\left(  h_{n_{k}}\left(  o^{\prime
}\right)  \right)  >T$. It follows from this, from inequality
(\ref{wylie_inequality}) and the definition of $\mu^{\prime}\left(
x^{\prime}\right)  =\mu\circ P\left(  x^{\prime}\right) \geq0$ that%
\begin{align*}
%\int_{0}^{T}\mu\circ\gamma_{n_{k}}\left(  s\right)  ds   =
\int_{0}%
^{T}\mu^{\prime}\circ\ \gamma_{h_{n_{k}}\left(  o^{\prime}\right)
}^{\prime}\left(  s\right)  ds &
 \leq\int_{0}^{r^{\prime}\left(  h_{n_{k}}\left(  o^{\prime}\right)  \right)
}\mu^{\prime}\circ\ \gamma_{h_{n_{k}}\left(  o^{\prime}\right)  }^{\prime
}\left(  s\right)  ds\\
& \leq2\left\{  \left(  m-1\right)  +H_{o^{\prime}}+\left\vert \nabla
f\right\vert \left(  o\right)  \right\} +2\sup_{M} |g|  .
\end{align*}
Whence, letting $k\rightarrow+\infty$ we deduce%
\[
\int_{0}^{T}\mu^{\prime}\circ\gamma^{\prime}\left(  s\right)  ds\leq2\left\{  \left(
m-1\right)  +H_{o^{\prime}}+\left\vert \nabla f\right\vert \left(  o\right)
\right\}+2\sup_{M} |g|
\]
which contradicts (\ref{wylie_inequality2}).

Now for the proof of (b), suppose $Ric\leq c$.
Fix $q\in M$ such that $r (q )=\mathrm{dist} (o,q )>2$, and let $\gamma_q$
be a minimizing geodesic
joining $o$ to $q$. As above, combining (B) of Lemma \ref{lemmatopolog1} and
Lemma~\ref{lemmatopolog}, and recalling that $\mu(x)=\mu_o(r(x))$ is radial we obtain
\begin{align*}
\ - |\nabla f (o ) |-
 |\nabla f (\gamma (r(q )) ) | - 2\sup_M|g|
+\int^{r (q )}_{0}\mu_o(s)ds
\leq& 2 (m-1 )+ H_{q}+H_{o}\\ \leq& 2 (m-1 )+2c,
\end{align*}
which implies
\[
\  |\nabla f (q ) |\geq\int^{r (q )}_{0}\mu_o (s )ds
+ \{- |\nabla f (o ) |-2 (m-1 )-2c \}.
\]
Since $0<\mu_o\notin L^{1} (+\infty )$ if $r (q )>>1$, say
$r (q )\geq R_{0}$, we have $ |\nabla f (q ) |>0$.
Thus $f$ has no critical point in $M\setminus B_{R_{0}} (o )$.
Again from Lemmas \ref{lemmatopolog1} and \ref{lemmatopolog}, for
every $0\leq t\leq r(q)$,
\[
\ \int^{t}_{0}\!\!\!\mu_o (s)ds
- \langle \nabla f\circ\gamma_{q},\dot{\gamma}_{q} \rangle
+ \langle \nabla f\circ\gamma_{q},\dot{\gamma_{q}} \rangle
_{|_{s=0}} \!\!\!+g(q)-g(o)
\leq 2 (m-1 )+2c,
\]
so that
\[
\ \frac{d}{ds} f\circ\gamma_{q}\left|_{s=t } \right.
\geq\int^{t}_{0}\mu_o (s )ds
- \{ |\nabla f (o ) |  +2\sup_M|g| +2 (m-1 )+2c \}.
\]
Thus, integrating on $ [2, r (q ) ]$,
\begin{equation*}
\begin{split}
f (q ) & \geq\int^{r (q )}_{2}\int^{t}_{0}
\mu_o(\gamma(s ) )ds  - |f (\gamma_{q} (2 ) ) | \\
& \hskip 1cm  \quad - \bigl \{ |\nabla f (o ) | +2\sup_M|g| +2 (m-1 )2c \bigr \}
\bigl(r (q ) -2 \bigr)
\\ &
\geq \int^{r (q )}_{2}\int^{r (q )}_{0}\mu_o (s )ds -\max_{\partial B_{2} (o )} |f |
\\
& \hskip 1cm  \quad - \bigl\{ |\nabla f (o ) |+2\sup_M |g| +2 (m-1 )+ 2c  \bigr \}
\bigl (r (q )-2\bigr ) \rightarrow +\infty,
\end{split}
\end{equation*}
for $r (q )\rightarrow +\infty$. Therefore $f$ is a smooth exhaustion function whose critical points are confined in a compact set. By standard Morse theory, there exists a compact manifold $N$ with boundary such that $M$ is diffeomorphic to the interior of $N$.

Finally, we  prove (c). Suppose that $\sup_{M} (|\nabla f | +|g|) \leq F<+\infty$.
Then, by (\ref{ric_est}) in Lemma \ref{lemmatopolog},
 for every unit speed geodesic  $\gamma$ issuing from $o$ we have
\[
Ric (\dot{\gamma},\dot{\gamma} )\geq\mu_{0}+\frac{d}{dt}G\circ\gamma,
\]
where $G\circ\gamma
=- \langle \nabla f\circ\gamma,\dot{\gamma} \rangle + g\circ\gamma$
satisfies $ |G\circ\gamma |\leq\sup_M (|\nabla f | +  |g|)$.
Using  Theorem 1.2 in \cite{Ga} we obtain the desired diameter estimate.
\end{proof}
\bigskip

\end{document}